\documentclass[reqno,12pt,letterpaper]{amsart}
\usepackage[proof]{sdmacros}
\usepackage{enumerate}
\usepackage{comment}
\usepackage{subfigure}
\usepackage{amsmath,amsthm}
\usepackage{amssymb}
\newcommand{\RR}{{\mathbb R}}

\newcommand{\xR}{\mathbb R}
\newcommand{\NN}{{\mathbb N}}
\newcommand{\CC}{{\mathbb C}}

\DeclareMathOperator{\RE}{Re}

\DeclareMathOperator{\IM}{Im}

\def\eps{\varepsilon}

\def\la{\left\lvert}
\def\lA{\left\lVert}
\def\ra{\right\rvert}
\def\rA{\right\rVert}

\def\les{\lesssim}

\def\ba{\begin{align}}
\def\bad{\begin{aligned}}
\def\be{\begin{equation}}
\def\ea{\end{align}}
\def\ead{\end{aligned}}
\def\ee{\end{equation}}
\def\e{\eqref}

\def\Pr{\mathcal{P}}
\def\xT{\mathbb{T}}

\def\xR{\mathbb{R}}
\title{Damping for fractional wave equations and applications to water waves}
\author{Thomas Alazard}
\email{thomas.alazard@ens-paris-saclay.fr}
\address{Universit\'e Paris-Saclay, ENS Paris-Saclay, CNRS, Centre Borelli UMR9010, avenue des Sciences, F-91190 Gif-sur-Yvette France}
\author{Jeremy L. Marzuola}
\email{marzuola@math.unc.edu}
\address{Department of Mathematics, University of North Carolina, Chapel Hill, NC 27514}
\author{Jian Wang}
\email{wangjian@email.unc.edu}
\address{Department of Mathematics, University of North Carolina, Chapel Hill, NC 27514}


\pagestyle{plain}
\begin{document}

\begin{abstract}
Motivated by numerically modeling surface waves for inviscid 
Euler equations, we analyze linear models for damped water waves and establish decay properties for the energy for sufficiently regular initial configurations.  Our findings give the explicit decay rates for the energy, but do not address reflection/transmission of waves at the interface of the damping. Still for a subset of the models considered, this represents the first result proving the decay of the energy of the surface wave models.    
\end{abstract}

\maketitle

\section{Introduction}
\label{sec: intro}

Motivated by highly successful numerical methods for damping the surface water wave equations proposed in the work \cite{clamond2005efficient}, we wish to establish a theory of absorbing boundary conditions/perfectly matched layers as an approach to a damped linear water wave models.  Such methods are essential to ensure that one can numerically simulate long-time behaviors of wave-trains without boundary interference.  While nonlinear damping mechanisms have been proposed using nonlinear properties of the water wave models in the work \cite{alazard2017stabilization,alazard2018stabilization,alazard2018control}, implementation of such methods can be numerically very stiff since the nonlinear damping mechanisms involve many spatial derivatives of the underlying models.  However, the proposed methods in the work \cite{clamond2005efficient} are extremely non-stiff, which we argue is strongly related to them arising due to linear mechanisms for damping.  Such connections should be explored further to fully understand the efficacy of these existing methods. The connection between damped waves and absorbing boundary conditions has long been understood for models with local differential operators, see for instance \cite{johnson2021notes,nataf2013absorbing} and references therein, so we endeavor here to extend our understanding of damping effects the inherently non-local models that arise the water wave problem.

Let us very briefly recall a Hamiltonian formulation of the evolution of a fluid interface in the gravity-capillary water wave system subject to an external pressure.  This equations can be written in terms of the surface height, denoted $\eta(x,t)$, and the velocity potential of the fluid restricted to the surface, denoted $\phi(x,t)$.  In \cite{Mar_Prior30}, the authors derive a robust method for numerically solving the Euler equations in very general geometric setting using the coordinate equations
\begin{align*}
  \eta_t = G(\eta) \phi, \ \ \phi_t = (\phi_\alpha/s_{0,\alpha})V + (\partial\phi/\partial n)U
  -\frac{1}{2}|\nabla\phi|^2 - g\,\eta_0(\alpha,t) + \tau\frac{\theta_\alpha}{s_{0,\alpha}} + P_{ext}.
  \end{align*}
Here $G(\eta)$ is the un-normalized Dirichlet-to-Neumann map, $\theta$ is the tangent angle of the surface, $s_\alpha$ is an arc-length parameter, $U,V$ are the normal and tangential derivatives at the surface and $P_{ext}$ is an external pressure term in which we can introduce damping or forcing on the equations.  We will consider especially a form of damping introduced by Clamond-Fructus-Frue-Kristiansen in \cite{clamond2005efficient}.  

 In the case of non-zero surface tension ($\tau > 0$) one can use a prescribed $P_{ext}$ to stabilize small waves similar to the work of Alazard et al, see \cite{alazard2018control}.  When $\tau = 0$, we can think of these conditions as numerical boundary conditions that absorb energy and allow for as little reflection as possible.   Generically, one numerically solves the water waves problem on a periodic domain of length $2 \pi$ and take $\omega \subset [0, 2\pi)$ a connected interval on which we will damp the fluid with corresponding indicator function $\chi_\omega$.  We will consider here the damping properties of the numerically effective damping term, 
 $$P_{ext} = \partial_x^{-1} (\chi_\omega \phi_x ),$$ 
 as proposed in \cite{clamond2005efficient}, which we will denote as Linear $H^{1/2}$ Damping of the water wave problem related to the order of regularity required to establish the model equation \eqref{eqn:dampwave} from a full paradifferential diagonalization of the water wave equations, see \cite{alazard2011water}.

Using the paradifferential formulation of the water waves developed in for instance \cite{alazard2011water}, one observes the following leading order linear model for damped gravity water waves
\begin{equation}
\label{eqn:dampwave}
\partial_t^2 u + |D| u + \chi \partial_t u = 0 .
\end{equation}
This model can be studied from the classical point of view of scattering theory and perfectly matched layers, though the non-local nature of the operator $|D|$ means that many known techniques fail and more refined tools are required.  To that end, we study \eqref{eqn:dampwave} here using propagation estimates in the study of semiclassical scattering operators, which have been developed quite thoroughly in the recent book \cite{dyatlov2019mathematical} for operators of the form $-\Delta + V$.  However, the non-locality of operators of the form $|D| + V$ results in some important modifications that we illuminate here.  Much of our analysis should be extendable to other non-local wave equation models with appropriate modifications.   The well-posedness of a nonlinear model related to \eqref{eqn:dampwave} in the setting of the water waves with surface-tension was established in the recent work of \cite{moon2022toy}, but the strength and speed of damping that arises from such a method is not clear.  Here, we are able to prove the polynomial decay of the energy for the linear model.  

{  As discussed in \cite{alazard2017stabilization}, there is a long-standing connection between damped wave equations, absorbing boundary conditions in numerical analysis and the notions of so-called control and observability estimates for a given equation on the support of the damping function.\footnote{See for instance \cite{alazard2017stabilization} or \cite{burq2019rough} for careful definitions of control/observability estimates if the reader is unfamiliar.}
While our approach here does not use such an estimates directly, some important surveys and results in this direction for a variety of models that have similar proof strategies include \cite{alabau2012wave,bardos1992sharp,burq2019rough,burq2004geometric,macia2021observability,phung2007polynomial,RaTa,zuazua2005propagation,zuazua2007controllability}.  We also highlight a $1d$ specific version of absorbing boundary condition was introduced in \cite{jennings2014water}, though we point out that the model we consider here can be easily generalized to higher dimensional water wave models.}

A related damping model is of the form
\begin{equation}
\label{eqn:dampwave_ham}
\partial_t^2 u + |D| u + |D|^{\frac12} (\chi |D|^{\frac12} \partial_t  u) = 0 .
\end{equation}
This results from a similar paradifferential diagonalization of a damping that is guaranteed to lead to nonlinear damping by consideration of the Hamiltonian energy for the water wave equations, see \cite{alazard2018stabilization}.  The techniques we apply here can likely be applied to study damping of this form with appropriate modifications, in particular with respect to the required regularity of the initial data.  However, for the sake of smoothness of exposition, we focus only on equation \eqref{eqn:dampwave} in our analysis below.

\subsection{Main results}
Here, we study a linear model for the damped water wave equation explicitly framed on a periodic domain, where we are able to give quantitative estimates on the damping rates of \eqref{eqn:dampwave}.  
To be precise, 
let $\mathbb T:=\RR/2\pi \mathbb Z$ be the 
circle and $\chi\in L^{\infty}(\mathbb T)$ 
satisfy $\chi\geq 0$. For $s>0$, we define the fractional Laplacian operator as follows

\begin{equation}\begin{gathered}
\label{eq: def-D}
    |D|^s\colon H^s(\mathbb T)\to L^2(\mathbb T), \ |D|^{s}u (x) := \sum_{n\in \mathbb Z} |n|^{s}\widehat{u}(n) e^{ i n x  }, \\
    \text{ for } u\in H^s(\mathbb T), \ \widehat{u}(n):=\frac{1}{2\pi}\int_0^{2\pi} e^{-i n x}u(x)dx.
\end{gathered}\end{equation}

For $(u_0,u_1)\in H^{\frac12}(\mathbb T)\times L^2(\mathbb T)$, we consider the damped fractional wave equation 
\begin{equation}
\label{eq: damp}
    (\partial_t^2+\chi\partial_t +|D|)u(t,x)=0, \ u(0,x)=u_0(x), \ \partial_tu(0,x)=u_1(x). 
\end{equation}
The energy of the solution to \eqref{eq: damp} is defined by 
\begin{equation}
\label{eq: def-energy}
    E(u,t):=\int_{\mathbb T} \left( ||D|^{\frac12}u(t,x)|^2+|\partial_t u(t,x)|^2 \right) dx.
\end{equation}

For a localized damping function $\chi$ (meaning that $\supp \chi\neq \mathbb T$), the geometric control condition fails. In this case, we show that for any $(u_0,u_1)\in H^{1}(\mathbb T)\times H^{\frac12}(\mathbb T)$, the energy of the solution decays as $1/t^2$. By constructing 
quasi-modes, we show that this polynomial rate is sharp for localized damping.

In order to explore the possibility of larger decay rates, we now consider damping with finitely many zeros. In this case, the energy decay rates depend on the ``switching on'' behavior of the damping function $\chi$ near zeros of $\chi$. To quantify this connection, we introduce the following definition:
\begin{defi}
\label{def: nondeg-chi}
We say $\chi \in C^{\infty}(\mathbb T)$, $\chi\geq 0$ has finite degeneracy, if $\chi$ has finitely many zeros $x_k$, $1\leq k\leq n$, and for each $x_k$, there exists $N_k>0$, such that
        \begin{equation}
            \chi^{(\ell)}(x_k)=0, \ 0\leq \ell\leq 2N_k-1, \ \chi^{(2N_k)}(x_k)>0.
        \end{equation}
\end{defi}

Given $\beta\in (0,1]$, 
we denote by $C^{0,\beta}(\mathbb T)$ the H\"older space of those continuous periodic 
functions $u\colon \mathbb{T}\to \xR$ such that 
$$
\lA u\rA_{C^{0,\beta}}=\sup_{x\in\mathbb{T}}\la u(x)\ra+
\sup_{x\neq y}\frac{\la u(x)-u(y)\ra}{\la x-y\ra}<+\infty.
$$
We are now ready to state the main results on the energy decay rates. 
\begin{theo}
\label{thm: decay}
Suppose $\chi\in C^{0,\beta}(\mathbb T)$ with $\beta>\frac12$, $\chi\geq 0$, $\chi\neq 0$. Then there exists $C>0$ such that for any $(u_0,u_1)\in H^{1}(\mathbb T)\times H^{\frac12}(\mathbb T)$, if $u$ solves \eqref{eq: damp}, then
\begin{equation}\label{i1}
    E(u,t)\leq \frac{C}{t^2} \left( \|u_0\|_{H^1}^2+\|u_1\|_{H^{\frac12}}^2 \right), \ t>0.
\end{equation}
Moreover, if $\chi\in C^{\infty}(\mathbb T)$ has finite degeneracy as in Definition \ref{def: nondeg-chi} and let $N$ be the maximal $N_k$ there, then for any $\gamma>0$, there exists $C>0$ such that for $(u_0, u_1)\in H^{1}(\mathbb T)\times H^{\frac12}(\mathbb T)$, we have 
    \begin{equation}
    \label{eq: full-spt-decay}
        E(u,t)\leq \frac{C}{t^{2+\frac{1}{N} -\gamma }}\left( \|u_0\|_{H^1}^2+\|u_1\|_{H^{\frac12}}^2 \right), \ t>0.
    \end{equation}
\end{theo}
\Remarks 
1. The $1/t^2$ energy decay rate for localized damping (see~\e{i1}) is sharp. 
This follows from the sharpness of the resolvent bound \eqref{eq: local-damp-bound} 
(see Remark after Theorem \ref{thm: resolvent-est}) and the equivalence between resolvent bounds and polynomial decay rates 
proved by Anantharaman--L\'eautaud (see \cite[Proposition 2.4]{anantharaman2014sharp}, which is also stated in \S \ref{ResToSG} in the current paper).  

\noindent
2.  Suppose $\chi\in C^{\frac{k}{2}}(\mathbb T)$ for $k \geq 2$. If $(u_0, u_1)\in H^{\frac{k+1}{2}} \times H^{\frac{k}{2}}$, then the polynomial rates in \eqref{i1}, \eqref{eq: full-spt-decay} can be improved to $\frac{C_k}{t^{2k}}$ and $\frac{C_k}{t^{(2+\frac{1}{N}) k-\gamma}}$, respectively. See Remark in \S \ref{ResToSG}.

\noindent
3. The first result in Theorem \ref{thm: decay} can be generalized to tori $\mathbb T^n$ of higher dimensions $n\geq 2$.
Indeed, suppose $\chi\in C^{\infty}(\mathbb T^n)$, $\chi\geq 0$, $\chi\neq 0$, and 
\begin{equation}
\label{eq: con-con}
    \text{ for any } (x,\xi)\in S^*\mathbb T^n, \text{ there exists } T>0, \text{ such that } x+T\xi \in \{ \chi>0 \},
\end{equation}
where $S^*\mathbb T^n \simeq \mathbb T^n\times \mathbb S^{n-1}$ is the cosphere bundle of $\mathbb T^n$.
Then there exists $C>0$ such that for any $(u_0, u_1)\in H^{1}(\mathbb T^n)\times H^{\frac12}(\mathbb T^n)$, we have 
\[ E(u,t)\leq \frac{C}{t^2} \left( \|u_0\|_{H^1}^2+\|u_1\|_{H^{\frac12}}^2 \right). \]
The proof is the same as in the 1D case presented in this paper. 

\noindent
4. For comparison, we recall the usual damped wave equation on $\mathbb T^n$
\begin{equation}
\label{eq: reg-wave}
(\partial_t^2 +\chi\partial_t -\Delta)u=0. 
\end{equation}
It is known that under the same dynamical condition \eqref{eq: con-con}, the energy of the solution to \eqref{eq: reg-wave} decays exponentially (see for instance \cite[Theorem 5.10]{zworski2012semiclassical}).  A result \cite{phung2007polynomial} for a damped wave equation on a bounded domain results in polynomial rates for damped wave equations without geometric control conditions provided an observability estimate holds and with very minimal regularity requirements on the damping function.

The energy decay rates for the damped equation \eqref{eq: damp} are closely related to the resolvent estimates (via semi-group theory or Fourier transform, see for instance \cite{anantharaman2014sharp, zworski2012semiclassical}) of the stationary operator 
\begin{equation} 
\label{eq: p-def}
P(\tau) := |D|-i\tau\chi-\tau^2
,\quad \ \tau\in \CC. 
\end{equation}
Theorem \ref{thm: decay} follows from the following resolvent bounds.
\begin{theo}
\label{thm: resolvent-est}
Suppose $\chi\in C^{0,\beta}(\mathbb T)$ with $\beta>\frac12$, $\chi\geq 0$, $\chi\neq 0$. Let $P(\tau)$ be as in \eqref{eq: p-def}. Then there exists $C>0$, such that
$\tau\in \RR$, $|\tau|>C$ implies
\begin{equation} 
\label{eq: local-damp-bound}
\|P(\tau)^{-1}\|_{L^2\to L^2}\leq C.
\end{equation}
Moreover, if $\chi\in C^{\infty}(\mathbb T)$ has finite degeneracy as in Definition \ref{def: nondeg-chi} and $N$ denotes 
the maximum of $N_k$ there, then for any $\gamma^{\prime}>0$, there exists $C>0$ such that $\tau\in \RR$, $|\tau|>C$ implies
\begin{equation}
\label{eq: finite-zero-bound}
   \|P(\tau)^{-1}\|_{L^2\to L^2}\leq C|\tau|^{-\frac{1}{2N+1}+\gamma^{\prime}}.
\end{equation}
\end{theo}

\Remark
Let us prove that~\e{eq: local-damp-bound} is optimal for cut-off function 
$\chi$ which does not have full support on $\mathbb T$ 
that is, $\chi$ is a localized damping function. To see this, 
let $a\in C^{\infty}(\mathbb T)$ be such that $a\neq 0$ and 
$\supp a\cap \supp \chi=\emptyset$. 
For all integers $k>0$, 
define $u_k(x):=a(x)e^{ikx}\in C^{\infty}(\mathbb T)$. Since 
$\supp a\cap \supp \chi=\emptyset$, we have 
\[ P(\sqrt{k})u_k(x) = [|D|,a]e^{ikx}. \]
We will establish in \S \ref{sec: pdo} that the commutator $[|D|,a]\in \Psi^0(\mathbb T)$. This means that $[|D|,a]: L^2(\mathbb T)\to L^2(\mathbb T)$ is a bounded operator and hence there is $C>0$ such that 
\[ \|P(\sqrt{k})u_k\|_{L^2} \leq C\|e^{ik\bullet}\|_{L^2}=(C/\|a\|_{L^2})\|u_k\|_{L^2}. \]
Therefore, there is no such $\delta>0$ and $C(\delta)>0$, such that 
$\tau\in \RR$, $|\tau|>C(\delta)$ implies
\[
\|P(\tau)^{-1}\|_{L^2\to L^2}\leq C(\delta) | \tau |^{-\delta}.
\]
This shows that \e{eq: local-damp-bound} is optimal for localized damping functions.

As we show in Lemma \ref{lem: mero}, 
\[ P(\tau)^{-1}: L^2(\mathbb T)\to H^1(\mathbb T), \ \tau\in \CC \]
is a meromorphic family of operators with finite rank poles. The poles of $P(\tau)^{-1}$ are called {\em resonances} for $P(\tau)$. We denote the set of resonances by $\mathscr R$. Using Theorem \ref{thm: resolvent-est} and Grushin problems, we give the following description of the distribution of resonances:
\begin{theo}
\label{thm: resonance}
Suppose $\chi\in C^{0,\beta}(\mathbb T)$ with $\beta>\frac12$, $\chi\geq 0$, $\chi\neq 0$. Then 
\begin{enumerate}[1.]
    \item There exists $C>0$, such that
    \begin{equation}
        \mathscr R\cap \{|\Re z|>C\}\subset \left\{ \tau\in \CC \ \left| \ \Im z>-C^{-1}|\Re \tau|^{-1} \right.\right\}.
    \end{equation}
    Moreover, if $\chi\in C^{\infty}(\mathbb T)$ has finite degeneracy as in Definition \ref{def: nondeg-chi}, then for any $\gamma^{\prime\prime}>0$, there exists $C=C(\gamma^{\prime\prime})>0$, such that 
    \begin{equation}
        \mathscr R\cap \{ |\Re z|>C \} \subset \{ \tau\in \CC \ | \ \Im z >-C^{-1}|\Re \tau|^{-\frac{2N}{2N+1}-\gamma^{\prime\prime}} \}.
    \end{equation}

    \item For $\nu>0$, we denote $P(\nu,\tau):=|D|-i\nu\tau \chi-\tau^2$ and $\mathscr R(\nu)$ the set of resonances of $P(\nu,\tau)^{-1}$. Then for each $k\in \mathbb Z$, $k>0$, there exists $\nu_k>0$,  an open neighborhood $U_k$ of $\sqrt{k}\in \CC$, and $\tau_{k,\pm}\in C^{\infty}((0,\nu_k);U_k)$, such that
    \begin{equation}
    \label{eq: res-exp}
        \mathscr R(\nu)\cap U_k= \{\tau_{k,+}(\nu), \ \tau_{k,-}(\nu)\}, \ \tau_{k,\pm}(\nu) = \sqrt{k} - \frac{\widehat{\chi}(0)\pm |\widehat{\chi}(2k)|}{2} i\nu+o(\nu).
    \end{equation}
\end{enumerate}
\end{theo}

\Remarks 
1. Despite the asymptotic expansion of the resonances in \eqref{eq: res-exp}, Theorem \ref{thm: resonance} does not imply the existence of a resonance-free strip with constant width for $P(\tau)^{-1}$, because the expansions are not uniform in $k$. 

\noindent
2. For the damped wave equation framed on a compact manifold, 
\[
\partial_t^2 u + \chi \partial_t u - \Delta_g u = 0, \]
where $\Delta_g$ is the usual Laplace-Beltrami operator, the distribution of resonances and corresponding energy decay rates have been studied in the works \cite{markus1982spectra}, \cite{sjostrand2000damp} and \cite{anantharaman2010spectral}. In \cite{markus1982spectra}, Markus--Matsaev established a Weyl law for the resonances in terms of counting how many resonances can exist at a given energy.  In \cite{sjostrand2000damp}, Sj\"ostrand further proved that the imaginary parts of the resonances ``concentrate'' (in a suitable sense) on the half average (with respect to the Liouville measure on the cosphere bundle of the manifold) of the damping function. In \cite{anantharaman2010spectral}, Anantharaman proved a fractal Weyl law for the resonances and studied several inverse problems. It is an important topic of future work to see if analogous results hold for damped fractional wave equations of the form studied here.

\subsection{Outline of Paper}

In \S \ref{sec:semiclassical}, we recall some properties of the semiclassical calculus for operators on the torus that we will require for our analysis.  The propagation estimates required to prove the resolvent estimate and the resulting resolvent mapping properties are proven in \S \ref{sec:resolvent}.  We prove a stronger resolvent estimate for damping functions that vanish to a given order at a finite number of points on $\mathbb{T}$ in \S \ref{sec:degdamp}.  In \S \ref{ResToSG}, we give an overview of the proof from \cite{anantharaman2014sharp} (simplified in our particular setting) that the resolvent bounds proved are equivalent to energy decay bounds for the damped fractional wave equation.  To give insight into the properties of the resolvent, in \S \ref{sec:resdist} we prove that the low energy resonances can be approximated well by a finite approximation that can be constructed explicitly using a Grushin problem.  Finally, in \S \ref{sec:numsims} we provide some numerical simulations demonstrating various aspects of our theorems in practice, both for the exact linear fractional wave model, as well as for water wave models with Clamond Damping.  This includes a means of approximating the low-energy resonances and comparing to the asymptotics in the previous section.  


\medskip

\noindent
{\bf Acknowledgements.}
We would like to thank Jared Wunsch for many helpful discussions, and Ruoyu P. T. Wang for showing us useful references. J.L.M was supported in part by NSF Applied Math Grant DMS-1909035 and NSF Applied Math Grant DMS-2307384.

\section{Semiclassical analysis on the circle}
\label{sec:semiclassical}

\subsection{Semiclassical pseudodifferential operators}
\label{sec: pdo}
We consider the following symbol class $S^k(T^*\mathbb T)$
\begin{equation*}
    S^k(T^*\mathbb T):=\left\{ a\in C^{\infty}(T^*\mathbb T) \ | \ |\partial_x^{\alpha}\partial_{\xi}^{\beta} a(x,\xi)|\leq C_{\alpha\beta} \langle\xi\rangle^{k-|\beta|}, \ C_{\alpha\beta}>0, \ \forall \alpha, \beta \right\},
\end{equation*}
here $T^*\mathbb T$ is the cotangent bundle of $\mathbb T$, $T^*\mathbb T\simeq \mathbb T_x\times \RR_{\xi}$, and $\langle \xi \rangle=\sqrt{1+\xi^2}$.
With the best constants $C_{\alpha\beta}$ as semi-norms, the class $S^k$ is a Fr\'echet space.
For $a\in S^k(T^*\mathbb T)$, which could depend on $h$ with semi-norms uniform in $h$, we define its semiclassical quantization by 
\begin{equation}
    \Op_h(a)u(x):=\int_{-\infty}^{\infty}\int_{-\infty}^{\infty} e^{\frac{i}{h}(x-y)\xi} a(h,x,\xi) u(y)dyd\xi.
\end{equation}
Let $\widehat{a}(k,\xi)$, $\widehat{u}(k)$ be the Fourier coefficients of $a(\bullet,\xi)$, $u$ as in \eqref{eq: def-D}. Then we have 
\begin{equation}
\label{eq: quant-fourier}
    \Op_h(a)u(x)=\sum_{n\in\mathbb Z}\sum_{\ell\in \mathbb Z} \widehat{a}(h,n-\ell,h\ell)\widehat{u}(\ell) e^{inx}.
\end{equation}

Similarly, we define its microlocal quantization by
\begin{equation*}\begin{split}
    \Op(a)u(x)
     :=\int_{-\infty}^{\infty}\int_{-\infty}^{\infty} e^{i(x-y)\xi} a(h,x,\xi) u(y)dyd\xi 
    = \sum_{n\in \mathbb Z}\sum_{\ell\in Z} \widehat{a}(h,n-\ell, \ell)\widehat{u}(\ell) e^{inx}.
\end{split}\end{equation*}
We also use the notations $a(x,D)$, $a(x,hD)$ for the microlocal or semiclassical quantization of $a$.
Let $\Psi_h^k(\mathbb T)$ be the set of pseudodifferential operators that  consists of semiclassical quantizations of all symbols in $S^k(T^*\mathbb T)$.
We define the semiclassical symbol map 
\[ \sigma_h: \Psi_h^k(\mathbb T) \to S^k(T^*\mathbb T)/S^{k-1}(T^*\mathbb T), \ A \mapsto [a]. \]
Later we will identify $[a]$ with $a$ if no ambiguity. 

\Remark 
$|D|$ is a microlocal operator with symbol $a(\xi)\in S^1(T^*\mathbb T)$ such that $a(k)=|k|$, $k\in \mathbb Z$. However, $h|D|$ is {\em not} a semiclassical operator, as its ``semiclassical symbol'' $|\xi|$ is not a smooth function.

Consider a family of symbols $a=\{a(h,x,\xi)\, | \, 0<h<h_0\}$ which is bounded in $S^k$ and is $C^{\infty}$ in $h$. Let $A$ be the semiclassical quantization of $a$. The semiclassical wavefront set $\WF_h(A)$ of $A$ is defined to be the essential support of $a$, that is, $(x_0,\xi_0)\notin \WF_h(A)$ if and only if there exists a neighborhood $U$ of $(x_0,\xi_0)$ in $T^*\RR$, such that for any $\alpha, \beta\in \NN$,
\[ \partial_x^{\alpha}\partial_{\xi}^{\beta}a(h,x,\xi)=O(h^{N}\langle 
\xi \rangle^{-N}), \text{ for all } (x,\xi)\in U, \ N\in \NN. \]
We also define the semiclassical elliptic set of $A$ by
\[ \Ell_h(A):=\{ (x,\xi) \ | \ \langle \xi \rangle^{-k}\sigma_h(A)(x,\xi)\neq 0 \}. \]

We record the following formula for the symbol calculus of operator compositions: if $a\in S^k(T^*\mathbb T)$, $b\in S^{\ell}(T^*\mathbb T)$, then 
\[\begin{gathered} 
\Op_h(a)\Op_h(b)=\Op_h(a\# b), \ a\# b \in S^{k+\ell}(T^*\mathbb T), \\
a\# b = \sum_{k=0}^{N} \frac{(-ih)^{j}}{j!}\partial_{\xi}^j a(x,\xi)\partial_x^j b(x,\xi) + O_{S^{k+\ell-j}(T^*\mathbb T)}(h^{N+1}), \ \text{for all} \ N\in \NN.
\end{gathered}\]
In particular, we can compute the commutator of two pseudodifferential operators 
\begin{equation}\label{commutator}
[\Op_h(a), \Op_h(b)] \in h\Psi^{k+\ell-1}_h(\mathbb T), \ \sigma_h(h^{-1}[\Op_h(a), \Op_h(b)]) = -i \{a,b\}  
\end{equation}
where $\{a,b\} := \partial_{\xi}a\partial_x b-\partial_x a\partial_{\xi}b$.

\subsection{Semiclassical Fourier multipliers}
When the symbol $a$ in \eqref{eq: quant-fourier} does not depend on $x$, we say $\Op_h(a)$ is a semiclassical Fourier multiplier. In this section, we generalize the definition of semiclassical Fourier multipliers to bounded symbols and prove a commutator estimate for semiclassical Fourier multipliers and functions with H\"older continuity.

 Given a bounded function 
$a\colon \xR\to\xR$, we define the semiclassical 
Fourier multiplier $a(hD)$ by 
$$
\widehat{a(hD)u}(k)=a(hk)\widehat{u}(k)
$$
for $u\in L^2(\mathbb T)$. We call $a$ the symbol of the semi-classical Fourier multiplier $a(hD)$.

\begin{prop}\label{P:comm}
Let $a \in C^\infty_b(\mathbb{R})$ 
be a smooth function, bounded together with all its derivatives, which in addition vanishes on a neighborhood of the origin. Consider two real numbers $0<\alpha <\beta \leq 1$. 
There exists a constant $C>0$ such that, for all 
$f\in C^{0,\beta}(\xT)$ and for all 
$u\in L^2(\mathbb{T})$, 
\begin{equation}\label{eq:comm}
\lA f a(hD) u-a(hD)(fu)\rA_{L^2}\le C h^{\alpha} \lA f\rA_{C^{0,\beta}}\lA u\rA_{L^2}.
\end{equation}
\end{prop}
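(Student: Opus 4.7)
The plan is to represent the commutator as an integral operator, estimate its kernel using the H\"older hypothesis on $f$, and reduce the proof to a single $L^1$ bound on a dilated kernel of $a(hD)$. Write $K_h(z)=\sum_{k\in\mathbb Z} a(hk) e^{ikz}$ for the periodic Schwartz kernel of $a(hD)$ (a distribution on $\mathbb T$, since $a$ need not be integrable). Then
\begin{equation*}
[a(hD),M_f]u(x)=\int_{\mathbb T} K_h(x-y)\bigl(f(y)-f(x)\bigr)u(y)\,dy,
\end{equation*}
and the H\"older estimate $|f(y)-f(x)|\leq \lVert f\rVert_{C^{0,\beta}}|y-x|^{\beta}$ combined with Schur's test reduces the desired operator bound to
\begin{equation*}
\bigl\lVert |z|^{\beta} K_h(z)\bigr\rVert_{L^1(\mathbb T)}\leq C h^{\beta}.
\end{equation*}

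To prove this kernel estimate I would exploit that $a\in C^{\infty}_b$ vanishes near the origin by decomposing it dyadically as $a=\sum_{j\geq j_0} a_j$ with $a_j(\xi)=\psi(2^{-j}\xi)a(\xi)$ supported in $|\xi|\sim 2^j$. Writing $a_j(\xi)=b_j(2^{-j}\xi)$, each $b_j$ is supported in $\{1/2\leq|\eta|\leq 2\}$ and bounded in $C^{\infty}$ uniformly in $j$, so its inverse Fourier transform is Schwartz with bounds uniform in $j$. Via Poisson summation, the periodic kernel $K_h^{(j)}$ of $a_j(hD)$ is, modulo rapidly decaying periodization tails, a dilation of this uniform Schwartz function at scale $h/2^j$, and a change of variables gives
\begin{equation*}
\bigl\lVert |z|^{\beta} K_h^{(j)}\bigr\rVert_{L^1(\mathbb T)}\leq C h^{\beta} 2^{-j\beta}.
\end{equation*}
Summing the geometric series over $j\geq j_0$ produces the bound on $K_h$ with the endpoint exponent $\alpha=\beta$, which in particular gives the stated estimate for every $\alpha<\beta$.

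The main technical obstacle is the careful handling of the periodization together with the fact that $a$ is only $C^{\infty}_b$ rather than Schwartz; this is exactly why the dyadic decomposition is necessary, since while $a(hD)$ itself need not have an integrable kernel on $\mathbb R$, each rescaled piece $b_j$ does, with constants that are uniform in $j$. An alternative route, more in the spirit of \eqref{commutator}, is to first prove the Lipschitz endpoint $\lVert[a(hD),M_f]\rVert_{L^2\to L^2}\leq C h\lVert f\rVert_{C^{0,1}}$ by applying symbolic calculus to a mollification $f_{\epsilon}=f\ast\rho_{\epsilon}$ at scale $\epsilon\approx h$ and controlling $f-f_{\epsilon}$ by the trivial $L^{\infty}$ bound $2\lVert a\rVert_{\infty}\lVert f-f_{\epsilon}\rVert_{\infty}$, and then to interpolate with $\lVert[a(hD),M_f]\rVert_{L^2\to L^2}\leq 2\lVert a\rVert_{\infty}\lVert f\rVert_{\infty}$ via the Besov identification $(L^{\infty},C^{0,1})_{\beta,\infty}=C^{0,\beta}$; real interpolation then recovers the statement with any $\alpha<\beta$. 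I would favor the direct kernel argument, since it yields the endpoint constant without invoking abstract interpolation machinery.
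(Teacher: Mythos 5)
Your strategy — write the commutator as an integral operator, push the H\"older modulus onto the kernel, and bound $\lVert |z|^{\beta}K_h\rVert_{L^1}$ — is a natural one, and it does work cleanly when $a$ is compactly supported (as it is in every application in the paper). But the hypothesis of the proposition is only $a\in C^{\infty}_b(\mathbb{R})$: bounded together with all derivatives, vanishing near $0$. This is the class $S^0_{0,0}$, and your dyadic argument implicitly upgrades it to $S^0_{1,0}$.

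The specific step that fails is the claim that $b_j(\eta)=a_j(2^j\eta)=\psi(\eta)\,a(2^j\eta)$ is bounded in $C^{\infty}$ uniformly in $j$. Differentiating gives
\begin{equation*}
b_j'(\eta)=\psi'(\eta)\,a(2^j\eta)+2^{j}\psi(\eta)\,a'(2^j\eta),
\end{equation*}
and the second term has $L^{\infty}$ norm of size $2^{j}\lVert a'\rVert_{L^\infty}$, which blows up unless $a'(\xi)$ decays like $|\xi|^{-1}$. Bounded derivatives do not imply that kind of decay: take $a(\xi)=\sin(\xi)\,(1-\phi(\xi))$ with $\phi$ a cutoff near the origin, which satisfies the hypotheses but has $a'$ bounded away from zero along a sequence $\xi\to\infty$. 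For this $a$, one computes directly that for large $j$ the Euclidean kernel of $a_j(hD)$ is (up to constants) $\frac{2^j}{h}\check\psi\bigl(\tfrac{2^j}{h}(z-h)\bigr)-\frac{2^j}{h}\check\psi\bigl(\tfrac{2^j}{h}(z+h)\bigr)$ — a pair of bumps at scale $h2^{-j}$ centered at $z=\pm h$, not at $z=0$. Hence $\lVert |z|^{\beta}K_h^{(j)}\rVert_{L^1}\sim h^{\beta}$ \emph{uniformly in} $j$, rather than $h^{\beta}2^{-j\beta}$, and the geometric series $\sum_j$ diverges. The commutator $[\sin(hD),M_f]$ is of course $O(h^{\beta}\lVert f\rVert_{C^{0,\beta}})$ (it is a difference of translates by $\pm h$, as one checks directly), but your estimate, having passed absolute values onto the kernel of each dyadic piece, has thrown away exactly the cancellation between the $j$'s that makes the total finite. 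So the argument is not just missing a technical hypothesis; the decomposition-then-absolute-values route genuinely does not close for $a\in C^\infty_b$.

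The paper's proof instead introduces paraproducts and cites the paradifferential commutator estimate $\lVert[\mathcal{Q},T_f]\rVert_{H^\sigma\to H^{\sigma-\alpha+\beta}}\lesssim\lVert f\rVert_{C^{0,\beta}}$ for $q$ in a bounded subset of $S^{\alpha}$, together with a Littlewood-Paley estimate for $f^\flat-T_f$, which is also where the strict inequality $\alpha<\beta$ enters (to make $\sum 2^{j(\alpha-\beta)}$ converge). Note that your proposal also claims the endpoint $\alpha=\beta$, which the proposition does not assert; that endpoint is fine when $a$ is compactly supported — indeed in that case no decomposition is needed at all, since $\check a$ is Schwartz and $K_h(z)=h^{-1}\check a(z/h)$ immediately gives $\lVert|z|^{\beta}K_h\rVert_{L^1}=h^{\beta}\lVert|w|^{\beta}\check a\rVert_{L^1}$ — but it is not justified by the present argument for general $C^\infty_b$ symbols. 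Your alternative interpolation route has the same underlying issue: the Lipschitz endpoint $\lVert[a(hD),M_f]\rVert\le Ch\lVert f\rVert_{C^{0,1}}$ amounts to $\lVert zK_h\rVert_{L^1}\lesssim h$, which again requires decay of $\check a$ that $C^\infty_b$ alone does not furnish.

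If you restrict to $a\in C^{\infty}_c(\mathbb{R})$ vanishing near $0$ (which covers every use of the proposition in the paper), your kernel argument becomes a short, self-contained, and more elementary proof than the paradifferential one, and it does yield the endpoint. To repair it under the stated $C^\infty_b$ hypothesis, you would need to either (a) add the symbol-class hypothesis $a\in S^{\alpha}_{1,0}$, or (b) exploit cancellation rather than absolute kernel bounds — e.g.\ decompose $\hat a$ rather than $a$ and use the translation representation $[e^{ihtD},M_f]u(x)=(f(x+ht)-f(x))u(x+ht)$ with a bound on $\int|t|^{\beta}\,d|\hat a|(t)$, which again needs something beyond boundedness of derivatives.
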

\begin{proof}
We introduce the symbols $a_h$ defined by $a_h(\xi)=a(h\xi)$. 
The key point is to observe  
that~$\left\{\,  h ^{-\alpha} a_{h} \mid 0< h \le 1 \,\right\}$ is a bounded family in~$S^{\alpha}$. The wanted estimate~\eqref{eq:comm} will then be a direct consequence of the following lemma.

\begin{lemm}\label{commutateur_S^1_f}
Consider two real numbers $m$ and $\alpha$ such that~$0<\alpha<\beta\le 1$. 
For any bounded subset~$\mathcal{B}$ of~$S^{\alpha}$, 
there exists a constant~$K$ 
such that for all symbol~$q\in\mathcal{B}$, 
all $f\in C^{0,\beta}(\mathbb{T})$, 
and all $u\in L^{2}(\mathbb{T})$,
\begin{equation}\label{commutator:estimateusual}
\left\lVert\mathcal{Q}(fu)-f\mathcal{Q} u\right\rVert_{L^{2}}
\le K\left\lVert f\right\rVert_{C^{0,\beta}(\mathbb{T})}
\left\lVert u\right\rVert_{L^{2}},
\end{equation}
where $\mathcal{Q}$ is the Fourier multiplier with symbol~$q$. 
\end{lemm}
\begin{proof}To prove this result, it 
is convenient to use the paradifferential calculus of Bony~\cite{Bony} and the Littlewood-Paley decomposition. We start by introducing some notations. 
Fix a function 
$\Phi\in C^\infty_0(\xR)$ with support in the interval $[-1,1]$ and 
equal to $1$ when $|\xi|\le 1/2$. Then set 
$\phi(\xi)=\Phi(\xi/2)-\Phi(\xi)$ which is supported in $\{\xi\in\xR \ | \ 1/2\le |\xi|\le 2\}$. 
Then, for all $\xi\in\xR$, one has $\Phi(\xi)+\sum_{j\in \mathbb{N}}\phi(2^{-j}\xi)=1$, 
which one can use to decompose tempered distribution (this setting includes in particular periodic function 
$u\in L^2(\xT)$). 
For $u\in \mathcal{S}'(\xR)$, 
we set
$$
\Delta_{-1}u
=\mathcal{F}^{-1}(\Phi(\xi)\widehat{u}),\qquad 
\Delta_j u=\mathcal{F}^{-1}(\phi(2^{-j}\xi)\widehat{u})\quad\text{for}\quad j\in\mathbb{N}.
$$ 
We also use the notation $S_j u =\sum_{-1\le p\le j-1} \Delta_pu$ for $j\ge 0$ (so that $S_0u=\Delta_{-1}u=\Phi(D)u$). 

Given a function $f$, denote by $f^{\flat}$
denotes the multiplication operator $ u\mapsto fu$ and denote by~$T_f$ the operator of paramultiplication by~$f$, defined by 
$$
T_f u=\sum_{j\ge 1}S_{j-1}(f)\Delta_j u.
$$
Now rewrite the commutator~$\left[ \mathcal{Q}, f^{\flat} \right]$ as
\begin{equation*}
\left[\mathcal{Q},T_{f}\right]+\mathcal{Q}(f^{\flat}-T_{f})-(f^{\flat}-T_{f})\mathcal{Q}.
\end{equation*}
The claim then follows from the bounds 
\begin{align}
& \forall \sigma\in\mathbb{R}, \quad \quad \quad
\left\lVert\left[\mathcal{Q},T_f \right]\right\rVert_{H^{\sigma} \rightarrow H^{\sigma-\alpha +\beta}} \le 
c_{1}(q,\sigma)\left\lVert f\right\rVert_{C^{0,\beta}}, \label{Com_Paradif_1} \\
&  \forall \sigma\in [-\alpha,0],\quad \big\lVert f^{\flat}-T_{f}\big\rVert_{H^{\sigma}\rightarrow H^{\sigma+\alpha}} \le c_2(\sigma)\lA f\rA_{C^{0,\beta}}, \label{Com_Paradif_2} \\
& \forall\sigma\in\mathbb{R}, \quad \quad \quad 
\left\lVert\mathcal{Q}\right\rVert_{H^{\sigma} \rightarrow H^{\sigma-\alpha}} \lesssim\sup\nolimits_{\xi} \la \langle \xi\rangle^{-\alpha}q(\xi)\ra,
\label{Com_Paradif_3}
\end{align}
where~$c_{1}(\cdot,\sigma)\colon S^{\alpha}\rightarrow \mathbb{R}_+$ maps bounded
sets to bounded sets. The estimate~\eqref{Com_Paradif_3} is a direct result of Plancherel Theorem.
We refer the reader to~\cite[Proposition 10.2.2]{HorL} and 
\cite[Theorem 9.6.4$'$]{HorL} for the proof of~\eqref{Com_Paradif_1}. Let us now prove~\eqref{Com_Paradif_2}. To do so, observe that
$$
fg-T_{f}g=\sum_{j,p\ge -1}(\Delta_j f)(\Delta_{p}g)-\sum_{-1\le j\le p-2}(\Delta_j f)(\Delta_{p}g)
=\sum_{j\ge -1} (S_{j+2}g)\Delta_{j}f.
$$
We then use the Bernstein's inequality and the characterization of Sobolev and H\"older spaces in 
terms of Littlewood-Paley decomposition, to write
\begin{alignat*}{2}
\lA fg-T_{f}g\rA_{H^{\sigma+\alpha}}
&\le \sum \lA (S_{j+2}g)(\Delta_{j}f)\rA_{H^{\sigma+\alpha}} &&\\
&\les \sum 2^{j(\sigma+\alpha)}\lA (S_{j+2}g)(\Delta_{j}f)\rA_{L^2} \qquad && (\text{since }\sigma+\alpha\ge 0)\\
&\le \sum 2^{j(\sigma+\alpha)} \lA S_{j+2}g\rA_{L^2}\lA \Delta_{j}f\rA_{L^\infty}&&\\
&\les \sum 2^{j\alpha}\lA g\rA_{H^{\sigma}}2^{-j\alpha}\lA f\rA_{C^{0,\beta}}&&(\text{since }\sigma\le 0)\\
&\les \lA g\rA_{H^{\sigma}}\lA f\rA_{C^{0,\beta}},
\end{alignat*}
where we used the assumption $\alpha<\beta$ to insure that the series $\sum 2^{j(\alpha-\beta)}$ converges.
\end{proof}
This concludes the proof of Proposition~\ref{P:comm}.
\end{proof}

\section{Resolvent estimates for localized damping}
\label{sec:resolvent}

This section is devoted to proving the resolvent bound for localized damping, that is, the first part of Theorem \ref{thm: resolvent-est}. This resolvent bound gives $1/t^2$ energy decay for solutions to \eqref{eq: damp} when $(u_0, u_1)\in H^{1}\times H^{\frac12}$ using \cite{anantharaman2014sharp}, and the proof of the energy decay is streamlined in \S \ref{ResToSG}.

To take advantage of semiclassical analysis, we introduce the semiclassical rescaling 
\begin{equation}
\label{eq: semi-scale}
\tau = \frac{z}{\sqrt{h}}, \ h>0, \ z\in \CC, 
\end{equation}
and define 
\begin{equation}\label{eq: phz}
    \mathcal P(h,z) := h|D| -i\sqrt{h} z\chi - z^2.
\end{equation}
We notice that 
\[ P(\tau) = h^{-1}\mathcal P(h,z). \]
We start by stating an equivalent version of the first part of Theorem \ref{thm: resolvent-est} in the semiclassical scale.
\begin{prop}
\label{prop: semi-loc}
Suppose $ \chi\in C^{0,\beta}(\mathbb T) $ with $\beta>\frac12$. Let ${\mathcal P}(h,z)$ be as in \eqref{eq: phz}. Then there exist 
$h_0>0$, $\delta>0$ and $C>0$ such that, for all $0<h<h_0$, all complex number $z\in S_\delta=\{x+iy \ | \ x\in (1-\delta, 1+\delta), y \in (-\delta h,\delta h)\}$ 
and for all 
$u\in C^{\infty}(\mathbb{T})$, there holds
\begin{equation}\label{n20}
\lA u\rA_{L^2}\leq Ch^{-1}\lA \mathcal{P}(h,z)u\rA_{L^2}.
\end{equation}
\end{prop}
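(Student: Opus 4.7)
The plan is to prove \eqref{n20} by a semiclassical defect measure argument, reasoning by contradiction. Suppose the estimate fails: then there exist sequences $h_n\downarrow 0$, $z_n\in S_\delta$ converging (after extraction) to some real $z_\infty\in[1-\delta,1+\delta]$, and $u_n\in L^2(\mathbb T)$ with $\lA u_n\rA_{L^2}=1$ and $\lA \mathcal P(h_n,z_n)u_n\rA_{L^2}=\varepsilon_n h_n$ for some $\varepsilon_n\to 0$. I would construct a positive Radon measure $\mu$ on $T^*\mathbb T$, the semiclassical defect measure of $\{u_n\}$, and derive a contradiction by showing it has total mass one but is forced to vanish identically.

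I would first establish semiclassical frequency localization. Choose $\psi\in C^\infty(\mathbb R)$ equal to $1$ in a neighborhood of $z_\infty^2$ and supported in an annulus $\{c<|\xi|<C\}$. Commuting gives $(h|D|-z_n^2)(1-\psi(h_nD))u_n = (1-\psi(h_nD))\mathcal P u_n + i\sqrt{h_n}z_n[1-\psi(h_nD),\chi]u_n + i\sqrt{h_n}z_n\chi(1-\psi(h_nD))u_n$. The Fourier multiplier $h|D|-z_n^2$ is elliptic on $\supp(1-\psi)$; Proposition~\ref{P:comm} controls the commutator by $O(h_n^\alpha)$ for some $\alpha\in(1/2,\beta)$ on the piece of $1-\psi$ bounded away from the origin, while the low-frequency piece is handled separately via the elliptic bound on $h|D|-z_n^2$ near $0$ together with the $\sqrt{h_n}$-smallness of the damping. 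Inverting and absorbing yields $\lA (1-\psi(h_nD))u_n\rA_{L^2}\to 0$. Next, pair the equation with $u_n$ and take imaginary parts: $\Im\langle u_n,\mathcal P(h_n,z_n)u_n\rangle = -\sqrt{h_n}\Re(z_n)\int_{\mathbb T}\chi|u_n|^2\,dx - \Im(z_n^2)$. Combining $|\Im\langle u_n,\mathcal P u_n\rangle|\le\varepsilon_n h_n$ with $|\Im(z_n^2)|=O(h_n)$ yields the damping bound $\int \chi |u_n|^2\,dx = O(\sqrt{h_n})\to 0$.

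Extracting a subsequence, let $\mu\ge 0$ be the semiclassical defect measure of $\{u_n\}$; frequency localization ensures $\mu(T^*\mathbb T)=1$, ellipticity of the real part of $\mathcal P$ off the characteristic variety forces $\supp\mu\subset\{|\xi|=z_\infty^2\}$, and the damping bound gives $\mu(\chi\otimes 1)=0$, hence $\supp\mu\subset\{\chi=0\}$. For propagation, I test with real $a\in C_c^\infty(T^*\mathbb T)$ whose $x$-support lies in the interior of $\{\chi=0\}$: then $\chi\cdot\Op_h(a)\equiv 0$ identically, so the commutator identity $\frac{i}{h_n}\langle[\mathcal P(h_n,z_n),\Op_h(a)]u_n,u_n\rangle \to \mu(\{|\xi|,a\})$ closes without the otherwise divergent damping cross-term, giving $\mu(\{|\xi|,a\})=0$ for all such $a$. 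Hence $\mu$ is invariant under the Hamiltonian flow $\dot x=\mathrm{sgn}(\xi)$ on $\mathrm{int}\{\chi=0\}$. On each of the two components $\{\xi=\pm z_\infty^2\}$ of the characteristic set, each a copy of $\mathbb T$, $\mu$ vanishes on the open set $\{\chi>0\}$ and is translation-invariant on its complement; since every orbit enters $\{\chi>0\}$ in finite time because $\chi\not\equiv 0$ (the one-dimensional geometric control condition), $\mu$ must vanish everywhere, contradicting $\mu(T^*\mathbb T)=1$.

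The main technical obstacle is the commutator $[\Op_h(a),\chi]$ appearing in the propagation step, since Proposition~\ref{P:comm} only treats Fourier multipliers whereas $\Op_h(a)$ is a general semiclassical pseudodifferential operator. I would reduce to the Fourier-multiplier case by Fourier-expanding $a$ in $x$: writing $a(x,\xi)=\sum_k\tilde a(k,\xi)e^{ikx}$ so that $\Op_h(a)=\sum_k e^{ikx}\tilde a(k,h_n D)$, and noting that $e^{ikx}$ commutes with $\chi$, gives $[\Op_h(a),\chi]=\sum_k e^{ikx}[\tilde a(k,h_n D),\chi]$; each term is controlled by Proposition~\ref{P:comm}, and rapid decay of $\tilde a(k,\cdot)$ in $k$ for smooth $a$ ensures summability with the net bound $O(h_n^\alpha)$. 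A related subtlety, and the reason propagation is restricted to test symbols supported off $\supp\chi$, is that the weak $\sqrt{h}$-damping weight yields only $\int\chi|u_n|^2=O(\sqrt{h_n})$ rather than the $O(h_n)$ bound that would keep $\frac{1}{\sqrt{h_n}}\langle\chi\Op_h(a)u_n,u_n\rangle$ bounded; restricting $a$ so that $\chi\Op_h(a)\equiv 0$ identically sidesteps this issue and keeps the propagation argument closed.
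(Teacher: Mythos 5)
Your defect-measure contradiction argument is a genuinely different route from the paper's direct multiplier proof, but as written the propagation step has a real gap. By restricting the test symbol $a$ to have $x$-support inside the interior of $\{\chi=0\}$ (so that $\chi\Op_h(a)=0$), you establish only that $\mu$ is flow-invariant on $\mathrm{int}\{\chi=0\}$. Together with ``$\mu$ vanishes on $\{\chi>0\}$,'' this does \emph{not} force $\mu=0$: on each sheet $\{\xi=\pm z_\infty^2\}\cong\mathbb{T}$, a Dirac mass at a boundary point of $\{\chi=0\}$ satisfies both constraints and carries positive mass. Since $\chi\not\equiv 0$ and $\chi$ is continuous, $\{\chi=0\}$ is a proper closed subset of the circle with nonempty boundary, and your propagation gives no information at those boundary points. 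The sentence ``translation-invariant on its complement'' silently upgrades invariance on the interior to invariance on the closed set, and the subsequent ``every orbit enters $\{\chi>0\}$'' argument needs propagation across $\partial\{\chi=0\}$, which you do not have.

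The restriction on $a$ was motivated by the belief that the damping cross term diverges, but this rests on two missed observations. First, the contradiction hypothesis $\lA\mathcal{P}(h_n,z_n)u_n\rA_{L^2}=\varepsilon_n h_n$ with $\varepsilon_n\to 0$ gives $\int_{\mathbb T}\chi|u_n|^2\,dx = O(\varepsilon_n\sqrt{h_n})$, not merely $O(\sqrt{h_n})$. Second, one should factor $\chi=(\sqrt{\chi})^2$ before estimating. Writing $A=\Op_h(a)$ with real symbol,
$$
\frac{1}{\sqrt{h_n}}\langle \chi A u_n,u_n\rangle
= \frac{1}{\sqrt{h_n}}\langle A\sqrt{\chi}\,u_n,\sqrt{\chi}\,u_n\rangle
+ \frac{1}{\sqrt{h_n}}\langle[\sqrt{\chi},A]u_n,\sqrt{\chi}\,u_n\rangle .
$$
The first piece is $O(\varepsilon_n)$ since $\lA\sqrt{\chi}\,u_n\rA_{L^2}^2=O(\varepsilon_n\sqrt{h_n})$. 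For the second piece, $\sqrt{\chi}\in C^{0,\beta/2}(\mathbb T)$ with $\beta/2>1/4$, so Proposition~\ref{P:comm} (via your Fourier expansion of $a$ in $x$) gives $\lA[\sqrt{\chi},A]\rA_{L^2\to L^2}=O(h^{\alpha'})$ for some $\alpha'>1/4$, and hence this piece is $O(\sqrt{\varepsilon_n}\,h_n^{\alpha'-1/4})\to 0$. This is precisely where $\beta>1/2$ enters. With the cross term shown to vanish for \emph{arbitrary} admissible $a$, you get propagation on the full characteristic set, and the geometric control argument then closes. You should also be careful with the $\Im(z_n^2)/h_n$ contribution: it does not vanish but yields a bounded zeroth-order twist $\pm\partial_x\mu + \ell\mu=0$ on $\mathbb T$, which again has no nonzero nonnegative solution supported in a proper closed subset.

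For contrast, the paper avoids all of this by constructing an explicit escape function $\varphi$ whose derivative $\varphi'=\psi_+-\psi_-$ connects a neighborhood of a zero of $\chi$ to the region $\{\chi>0\}$; this quantitative multiplier argument sees the boundary of $\{\chi=0\}$ directly, gives an explicit constant in~\eqref{n20}, and handles the complex-$z$ perturbation in one clean step at the end.
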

\begin{proof}[Proof of Proposition \ref{prop: semi-loc}]
We denote by $C$ various constants independent of $h$ and whose value may change from line to line. 
We write $A\les B$ to say that $A\le CB$ for such a constant $C$.

We first assume that $z$ is a real number with $z\in (1-\delta, 1+\delta)$ for some $\delta>0$  sufficiently small.

{\bf 1. Estimate of $\chi u$.}

We claim that
\be\label{n0}
\lA \chi u \rA_{L^2}^2
\les h^{-1}\|{\mathcal P}(h,z)u\|_{L^2}\lA u\rA_{L^2}.
\ee
To see this, observe that, by definition of 
$\mathcal{P}(h,z)$ we have
$$
\chi u =-i z^{-1} h^{-\frac12}\left(h\la D\ra u -\mathcal{P}(h,z)u-z^2 u\right).
$$
Consequently, by taking the $L^2(\mathbb{T})$-scalar product with $u$, we get
\[
\langle \chi u,u \rangle=\RE \langle \chi u,u \rangle = 
-z^{-1}h^{-\frac12}  \IM\langle {\mathcal P}(h,z)u,u \rangle.
\]
Therefore, it follows from the Cauchy-Schwarz inequality that
$$
0\le \langle \chi u,u \rangle    \leq 
\frac{1}{1-\delta}h^{-\frac12}\lA{\mathcal P}(h,z)u\rA_{L^2}\lA u\rA_{L^2}.
$$
Since $\lA \chi u\rA_{L^2}^2\le \lA \chi\rA_{L^\infty}\langle \chi u,u \rangle$, this immediately implies the wanted estimate~\e{n0}.

\bigbreak

{\bf 2. Propagation estimates.}

\smallbreak

The estimate of the remaining component $(1-\chi)u$ 
is divided into two steps. 
We begin with the most delicate part, which consists in estimate the microlocal component of $(1-\chi)u$ where the operator $\mathcal{P}(h,z)$ is not elliptic. The analysis will therefore rely on a propagation argument. 

More precisely, consider a cut-off function 
$g\in C^{\infty}(\mathbb{R}; [0,1])$ satisfying
\[ g(\xi)
=
\left\{
\begin{aligned}
&0 \quad&&\text{for } \ \ |\xi|\leq \tfrac15 \text{ or } |\xi|\geq 5,\\ 
& 1\quad &&\text{for } \tfrac14\leq |\xi|\leq 4.
\end{aligned}
\right.
\]
We want to estimate the $L^2$-norm of 
$g(hD)((1-\chi)u)$. We claim that, for some exponent $\nu>0$, 
\be\label{N1}
\lA g(hD)\big((1-\chi)u\big)\rA_{L^2}^2
\les 
\lA \chi  u\rA_{L^2}^2 + 
h^{-1}\|{\mathcal P}(h,z)u\|_{L^2}\lA u\rA_{L^2} +  h^\nu\lA u\rA_{L^2}^2.
\ee
To prove this claim we use two different kinds of 
localization. 

{\em Localization in frequency.} We further 
decompose the problem into waves traveling to the left and waves traveling to the right. To do so, consider two cut-off functions 
$g_{\pm}\in C_c^{\infty}(\mathbb{R}; [0,1])$ 
with $g_{-}(\xi)=g_+(-\xi)$ and such that
\[ g_+(\xi)
=
\left\{
\begin{aligned}
&0 \quad&&\text{for } \ \xi\in (-\infty,\tfrac15)\cup (5,\infty),\\ 
& 1\quad &&\text{for } \xi\in [\tfrac14,4].
\end{aligned}
\right.
\]
To obtain the wanted estimate~\e{N1}, 
it is sufficient to prove that 
$ \lA g_+(hD)\big((1-\chi)u\big)\rA_{L^2}$ and 
$\lA g_-(hD)\big((1-\chi)u\big)\rA_{L^2}$ are bounded by the right-hand side of \e{N1}. 

These two terms will be treated similarly (see the explanations at the end of this step) and for notational simplicity we focus on the estimate of $g_+(hD)((1-\chi)u)$. 
Our aim is thus to prove that, for some exponent $\nu>0$, we have
\be\label{n10}
\lA g_+(hD)\big((1-\chi)u\big)\rA_{L^2}^2
\les 
\lA \chi u\rA_{L^2}^2 + 
h^{-1}\lA{\mathcal P}(h,z)u\rA_{L^2}\lA u\rA_{L^2} + h^\nu\lA u\rA_{L^2}^2.
\ee

{\em Localization in space.} 
In addition to the previous localization in frequency, we see 
that to prove~\e{n10}, 
by using a suitable partition of unity, it is sufficient to prove that, for any point $x_0\in \supp(1-\chi)$, there exist $\nu>0$ and a 
function $b\in C^\infty(\mathbb{T};[0,1])$ with $b(x_0)>0$ such that
\be\label{n2}
\lA g_+(hD)(b u)\rA_{L^2}\les\lA \chi u\rA_{L^2}^2 + 
h^{-1}\lA{\mathcal P}(h,z)u\rA_{L^2}\lA u\rA_{L^2} + h^\nu\lA u\rA_{L^2}^2.
\ee

We will use suitable cut-off functions $b$, 
as given by the following 

\begin{lemm}
\label{lem: cut-off}
Assume that $\chi(x_0)=0$. Then there exist two functions $\varphi, b \in C^{\infty}(\mathbb T)$ such that
$$
1\le \varphi \le 3,\  0\le b\le 1, \  b(x_0)>0,
$$
and moreover $\varphi$ is such that its derivative 
satisfies $\varphi^{\prime}=\psi_+ - \psi_-$ for some 
functions $\psi_{\pm}\in C^{\infty}(\mathbb T)$ satisfying 
$\psi_{\pm}\geq 0$ and
\begin{equation*}
\supp \psi_+\cap \supp \psi_-=\emptyset,\ 
\ \chi|_{\supp \psi_+}>0, \ \psi_-|_{\supp b}>0.
\end{equation*}
\end{lemm}

\begin{figure}[t]
    \includegraphics[scale=0.6]{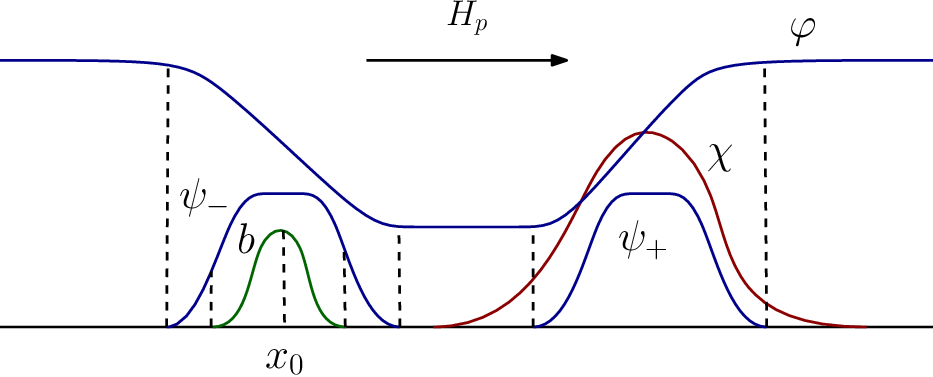}
    \caption{Auxiliary functions $\varphi$, $b$ constructed in Lemma \ref{lem: cut-off} with frequencies localized near $\xi=1$. The arrow indicates the direction of the Hamiltonian flow for $p=|\xi|$ near $\xi=1$.}
    \label{fig: cut-off}
\end{figure}

\begin{proof}[Proof of Lemma \ref{lem: cut-off}]
Introduce a $2\pi$-periodic function $\kappa\in C^\infty(\mathbb{T};[0,1])$ which is even and such that
\[
\kappa(x)=
\left\{
\begin{aligned}
&1 \quad&&\text{for } \ \la x\ra\le \tfrac14,\\ 
& 0\quad &&\text{for } \tfrac12\le \la x\ra\le \pi.
\end{aligned}
\right.
\]
Given three parameters $\alpha,\alpha'$ and $r$ to be determined, define
$$
\psi_-(x)=\alpha \kappa\left(\frac{x-x_0}{2r}\right),\quad b(x)=\alpha'\kappa\left(\frac{x-x_0}{r}\right).
$$
Now pick $x_1$ such that $\chi(x)>0$ for all $x\in [x_1-2r,x_1+2r]$  and set
$$
\psi_+(x)=\alpha \kappa\left(\frac{x-x_1}{2r}\right).
$$
Now let $\theta$ be the unique function 
$\theta\in C^\infty(\mathbb{T})$ with mean value $0$ and such that $\theta'=\psi_+-\psi_-$. We then set $\varphi(x)=2+\theta(x)$ and choose $\alpha,\alpha'$ and $r$ small enough.
\end{proof}

{\em Commutator argument.} 
Given the functions $\varphi=\varphi(x)$ 
and $g_+=g_+(\xi)$ 
as introduced above, we consider 
the operator 
$G_{+}$ defined by
\be\label{n60}
G_{+}u=\varphi g_+(hD)u.
\ee
The idea is to exploit the fact that $G_+^*G_+$ is self-adjoint to 
write 
$\Im\langle {\mathcal P}(h,z)u, G_+^*G_+u \rangle $ under the form of a commutator:
\begin{align}
     \Im\langle {\mathcal P}(h,z)u, G_+^*G_+u \rangle 
    &= \frac{1}{2i} \Big( 
    \langle {\mathcal P}(h,z)u, G_+^*G_+u \rangle
    -\langle {G_+^*G_+u,\mathcal P}(h,z)u \rangle \Big)\notag\\
&=\frac{1}{2i} \Big( \langle G_+^*G_+\mathcal{P}(h,z)u, u \rangle
    -\langle {\mathcal P}(h,z)^*G_+^*G_+u,u \rangle \Big)\notag\\
&=    \big\langle -\tfrac{1}{2i}[h|D|, G_+^*G_+]u,u \big\rangle
     -z\sqrt{h}  \Re\langle \chi u, G_+^*G_+u \rangle.\label{eq: com}
\end{align}

We then 
notice that by the assumption on the support of $g_+$, we have 
$\la h D\ra g_+(hD)  = h D g_+(hD)$ which in turn implies that
$$
\la  D\ra G_+^*= DG_+^*\quad\text{and}\quad
G_+\la D\ra= G_+ D \quad \text{with }D=\frac{1}{i}\partial_x.
$$
We thus end up with
\begin{equation*}
    -\frac{1}{2i}[h|D|, G_+^*G_+]=-\frac{1}{2i}[hD, G_+^*G_+].
\end{equation*}
Once this formula is established, 
one can compute this commutator using only the Leibniz formula. Indeed, directly from the definitions of $D=\partial_x/i$ and $G_+$ (see \e{n60}), 
we have (noticing that $g_+(hD)^*=g_+(hD)$)
\begin{align*}
-\frac{1}{2i}[hD, G_+^*G_+]u&=-\frac{h}{2i}\frac{1}{i}\big[\partial_x, g_+(hD) \big(\varphi^2 g_+(hD)\cdot\big)\big]u\\
&=h g_+(hD)\big(\varphi \varphi' g_+(hD)u\big).
\end{align*}
By combining the previous identities and using again the fact that $g_+(hD)^*=g_+(hD)$, we conclude that
$$
\big\langle -\tfrac{1}{2i}[h|D|, G_+^*G_+]u,u \big\rangle=h 
\big\langle \varphi\varphi' g_+(hD)u,g_+(hD)u\big\rangle.
$$
Now we use the special form of the function $\varphi$, that is the fact that 
$\varphi'=\psi_+-\psi_-$. Introduce 
the semiclassical operators 
$E_+$ and $F_+$ defined by 
$$
E_+u:=\varphi \psi_+ g_+(hD)u\quad,\quad
F_+u:=\varphi \psi_- g_+(hD)u.
$$
Then, we have
\begin{equation}
\label{eq: prop-com}
    \langle -\tfrac{1}{2i}[h|D|, G_+^*G_+]u,u \rangle = h\langle E_+u,g_+(hD)u \rangle-h\langle F_+u,g_+(hD)u \rangle.
\end{equation}

Having analyzed the first term in the right-side of \e{eq: com}, we now estimate the second one. To do so, we begin by writing the latter under the form
\begin{equation}
\Re\langle \chi u, G_+^*G_+ u \rangle = \langle \chi G_+ u, G_+ u \rangle + \langle \Re(G_+^*[G_+,\chi])u, u \rangle.
\end{equation}
Since $\chi\ge 0$, one has the obvious inequality
$$
\langle \chi G_+ u, G_+ u \rangle\ge 0.
$$
To estimate the commutator 
$[G_+,\chi] u$ we shall make use of the assumption that $\chi$ belongs to the H\"older space 
$C^{0,\beta}(\mathbb{T})$ for some exponent $\beta\in (1/2,1]$. 
Let $0<\eps<\beta-1/2$. 
It follows from Proposition~\ref{P:comm} applied with $\alpha=1/2+\eps$ that 
\begin{equation}
\lA [G_+,\chi] u\rA_{L^2}\leq C h^{\frac12 +\eps} \|u\|_{L^2}.
\end{equation}
Since $\lA G_+\rA_{L^2\to L^2}$ is bounded uniformly in $h$, it follows that 
\begin{equation}
\label{eq: prop-lot}
    z\sqrt{h}  \Re\langle \chi u, G_+^*G_+u \rangle
    \geq - Ch^{1+\eps}\lA u\rA_{L^2}^2.
\end{equation}


Now, by combining \eqref{eq: com} together with 
\eqref{eq: prop-com} and \eqref{eq: prop-lot}, 
we find 
\begin{equation*}
    h\langle F_+ u,g_+(hD)u \rangle\leq h\langle E_+ u, g_+(hD)u \rangle + |\langle
{\mathcal P}(h,z)u, G_+ ^*G_+u \rangle|+O(h^{1+\eps})\lA u\rA^2_{L^2}.
\end{equation*}
By using again the fact that 
$\lA G_+\rA_{L^2\to L^2}$ is bounded uniformly in $h$, by dividing each side of the previous inequality by $h$ and using the Cauchy-Schwarz inequality, this yields
\begin{equation*}
    \langle F_+ u, g_+(hD)u \rangle\leq \langle E_+ u,g_+(hD) u \rangle + h^{-1}\lA {\mathcal P}(h,z)u\rA_{L^2}\lA u \rA_{L^2}+O(h^\eps)\lA u\rA^2_{L^2}.
\end{equation*}

It remains to bound $\langle F_+ u, u \rangle$ (resp.\ $\langle E_+ u, u \rangle$) from below (resp.\ above). 
To do so, set $f=\sqrt{\psi_-}$. Since $f$ belongs to $C^{0,1/2}(\mathbb{T})$, it follows from Proposition~\ref{P:comm} applied with $\beta=1/2$ and $\alpha$ any arbitrary real number in 
$(0,\beta)$, that
$$
\langle F_+ u, g_+(hD)u \rangle=
\langle \varphi g_+(hD)(fu), g_+(hD)(fu) \rangle
+O(h^\alpha)\lA u\rA_{L^2}^2.
$$
Since $\varphi\ge 1$, we deduce that
$$
\langle F_+ u, g_+(hD)u \rangle\ge 
\lA  g_+(hD)(\sqrt{\psi_-} u) \rA_{L^2}^2
+O(h^\alpha)\lA u\rA_{L^2}^2.
$$
Now consider the function $b$ as given by Lemma~\ref{lem: cut-off}. Since $b$ can be written under the form 
$$
b=f\sqrt{\psi_-} \quad\text{with}\quad f=\frac{b}{\sqrt{\psi_-}}\in C^\infty(\mathbb{T}),
$$
we have
$$
g_+(hD)(b u)=f g_+(hD)(\sqrt{\psi_-} u) +\big[ 
g_+(hD),f\big] u.
$$
Using again Proposition~\ref{P:comm} to estimate the commutator, we get that 
$$
\langle F_+ u, g_+(hD)u \rangle\ge 
\lA  g_+(hD)(b u) \rA_{L^2}^2
+O(h^\alpha)\lA u\rA_{L^2}^2.
$$
On the other hand, by applying  
Proposition~\ref{P:comm} with $f=\sqrt{\varphi \psi_+}\in C^{0,1/2}(\mathbb{T})$, we deduce that
$$
\langle E_+ u, g_+(hD)u \rangle=
\lA g_+(hD)(\sqrt{\varphi \psi_+}u)\rA_{L^2}^2
+O(h^\alpha)\lA u\rA_{L^2}^2.
$$
This immediately implies that
$$
\langle E_+ u, g_+(hD)u \rangle\le 
\lA \sqrt{\varphi \psi_+}u\rA_{L^2}^2
+O(h^\alpha)\lA u\rA_{L^2}^2\le C 
\lA \chi u\rA_{L^2}^2 +Ch^\alpha\lA u\rA_{L^2}^2.
$$
Consequently, we end up with 
\begin{equation}\label{3.18}
\lA g_+(hD)(b u)\rA_{L^2}\les
\lA \chi u\rA_{L^2}^2 + 
h^{-1}\|{\mathcal P}(h,z)u\|_{L^2}\lA u\rA_{L^2} + (h^\alpha+h^\eps)\lA u\rA_{L^2}^2.
\end{equation}

This completes the proof of \e{n2} which in turn completes the proof of \e{n1}.

Lastly, to prove similar estimates for $\|g_-(hD)((1-\chi)u)\|_{L^2}$, it suffices to construct auxiliary functions $\varphi$, $b$ as in Lemma \ref{lem: cut-off} with mere changes (see Figure \ref{fig: cut-off-m})
\[ \varphi^{\prime}=\psi_- - \psi_+, \  \chi|_{\supp \psi_-}>0, \ \psi_+|_{\supp b}>0. \]

\begin{figure}[t]
    \includegraphics[scale=0.6]{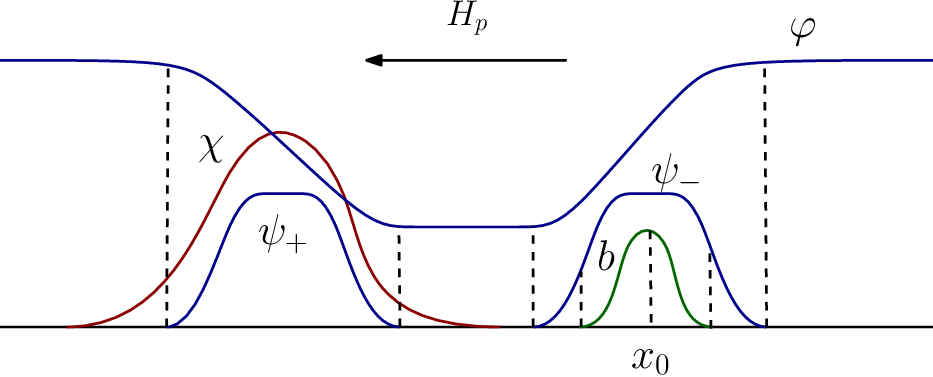}
    \caption{Auxiliary functions $\varphi$, $b$ used in the proof of the propagation estimates with frequencies localized near $\xi=-1$. The arrow indicates the direction of the Hamiltonian flow for $p=|\xi|$ near $\xi=-1$.}
    \label{fig: cut-off-m}
\end{figure}

{\bf 3. Elliptic estimates.}

Having estimated the main contribution of the frequencies of size near $1$ in the semiclassical scale, we now turn to the estimation of the low and high frequency components. More precisely, we want to estimate the $L^2$-norms of 
$G_0u$ and $G_\infty u$ where $G_0=g_0(hD)$ and $G_\infty=g_\infty(hD)$ are semiclassical Fourier multipliers with symbols $g_0=g_0(\xi)$ and 
$g_\infty=g_\infty(\xi)$ in $C^{\infty}(\mathbb{R};[0,1])$, such that
\begin{equation}\label{eq: g0i}
    \begin{gathered}
        g_{0}(\xi)=1 \ \text{for}\ |\xi|\leq 1/5, \ g_{0}(\xi)=0 \ \text{for}\ |\xi|\geq 1/4;\\
        g_{\infty}(\xi)=0 \ \text{for} \ |\xi|\leq 4, \ g_{\infty}(\xi)=1 \ \text{for}\ |\xi|\geq 5.
    \end{gathered}
\end{equation}

{\em High frequency estimate.} 
We begin by estimating $G_{\infty}u$. To do so, we write that, by the definition of $\mathcal{P}(h,z)$,
$$
G_\infty \la hD\ra u=G_\infty \mathcal{P}(h,z)u+ z^2 G_\infty u + i z G_\infty \sqrt{h}\chi u.
$$
Since $\lA G_\infty\rA_{L^2\to L^2}\le 1$, we have
$$
\lA G_\infty \la hD\ra u\rA_{L^2}\le \lA 
\mathcal{P}(h,z)u\rA_{L^2}+(1+\delta)^2\lA G_\infty u\rA_{L^2}
+(1+\delta)\sqrt{h}\lA \chi\rA_{L^\infty}\lA u\rA_{L^2}.
$$
On the other hand, since $\la \xi\ra\ge 4$ on the support of $g_\infty(\xi)$, 
directly from the Plancherel theorem, we have
$$
4\lA G_\infty u\rA_{L^2}\le \lA G_\infty \la hD\ra u\rA_{L^2}.
$$
Therefore, for $h$ and $\delta$ small enough, we get
$$
\lA G_\infty u\rA_{L^2}^2\le \lA 
\mathcal{P}(h,z)u\rA_{L^2}^2+\tfrac{1}{3} \lA u\rA_{L^2}^2.
$$

{\em Low frequency estimates.}
The low frequency component $G_0u$ is estimated in a similar way. 
Namely, we write
$$
z^2 G_0 u =G_0 \la hD\ra u - G_0 \mathcal{P}(h,z)u-i z G_0\sqrt{h}\chi u,
$$
and then observe that 
$\lA G_0\rA_{L^2\to L^2}\le 1$, to obtain
$$
(1-\delta)^2\lA G_0 u\rA_{L^2}\le \lA G_0 \la hD\ra u\rA_{L^2}+\lA 
\mathcal{P}(h,z)u\rA_{L^2}+
(1+\delta)\sqrt{h}\lA \chi\rA_{L^\infty}\lA u\rA_{L^2}.
$$
Since $\la \xi\ra\le 1/4$ on the support of $g_0(\xi)$, 
directly from the Plancherel theorem, we have
$$
\lA G_0 \la hD\ra u\rA_{L^2}\le \tfrac{1}{4} \lA G_0u\rA_{L^2}.
$$
Therefore, for $h$, $\delta$ small enough, we get
$$
\lA G_0 u\rA_{L^2}^2\le \lA 
\mathcal{P}(h,z)u\rA_{L^2}^2+\tfrac{1}{3}\lA u\rA_{L^2}^2.
$$

{\bf 4. End of the proof when $z$ is a real number.}

The Plancherel theorem implies that 
$$
\lA u\rA_{L^2}^2\le 
\lA g(hD)u\rA_{L^2}^2+\lA G_0u\rA_{L^2}^2+\lA G_\infty u\rA_{L^2}^2.
$$
Therefore, it follows from 
the previous $L^2$-estimates for $G_0u$ and $G_\infty u$ that
$$
\lA u\rA_{L^2}^2\le 
\lA g(hD)u\rA_{L^2}^2+2\lA 
\mathcal{P}(h,z)u\rA_{L^2}^2+\tfrac23 \lA u\rA_{L^2}^2,
$$
and hence we have
$$
\lA u\rA_{L^2}^2\le 
3\lA g(hD)u\rA_{L^2}^2+6\lA 
\mathcal{P}(h,z)u\rA_{L^2}^2.
$$
Now, it follows from \e{n0} and \e{N1} that  
\be\label{n1}
\lA g(hD)u\rA_{L^2}^2
\les 
h^{-1}\|{\mathcal P}(h,z)u\|_{L^2}\lA u\rA_{L^2} +  h^\nu\lA u\rA_{L^2}^2.
\ee
We conclude that
$$
\lA u\rA_{L^2}^2\les h^{-1}\|{\mathcal P}(h,z)u\|_{L^2}\lA u\rA_{L^2} + 
\lA 
\mathcal{P}(h,z)u\rA_{L^2}^2+h^\nu\lA u\rA_{L^2}^2.
$$
Consequently, taking $h$ small enough, 
we obtain
$$
\lA u\rA_{L^2}\les h^{-1}\|{\mathcal P}(h,z)u\|_{L^2}.
$$
This concludes the proof of the desired result~\e{n20} when $z\in (1-\delta, 1+\delta)$.

{\bf 5. End of the proof when $z$ is a complex number.} 
Lastly, if $z$ is a complex number, we notice that 
\[ \mathcal P(h,z) = \mathcal P(h,\Re z)+(\Im z)\left( \sqrt{h}\chi +\Im z -2 i \Re z \right). \]
Thus for any complex number $z$ such that
$|\Re z-1|\leq \delta$ and $|\Im z|\leq \delta h$, we have 
\[\begin{split} 
\|\mathcal P(h,z)u\|_{L^2}\geq & \|\mathcal P(h,\Re z)u\|_{L^2} - \delta h (\sqrt{h}\|\chi\|_{L^{\infty}}+\delta h+2(1+\delta) ) \|u\|_{L^2} \\
\geq & (C^{-1}-C\delta)h\|u\|_{L^2}.
\end{split}\]
Consequently, for all $\delta$ and $h$ small enough, 
the estimate \e{n20} still holds.
\end{proof}

\section{resolvent bounds for finitely degenerate damping}
\label{sec:degdamp}

Let $\mathcal P(h,z)$ be as in \eqref{eq: phz}.
We also introduce
\begin{equation*}
    P_{\pm}(\tau):=\pm D-i\tau \chi -\tau^2, \ \tau\in \CC,
\end{equation*}
and the corresponding rescaled operators
\begin{equation}
\label{eq: ppm-def}
    \Pr_{\pm}(h,z):=\pm hD-i\sqrt{h} z\chi-z^2.
\end{equation}

The goal of this section is to prove the second part of Theorem \ref{thm: resolvent-est}. We start by rewriting the resolvent bounds in the semiclassical scale.

\begin{prop}
\label{prop: finite-p}
Suppose $\chi\in C^{\infty}(\mathbb T)$ has finite degeneracy as in Definition \ref{def: nondeg-chi}. Then for any $0<\epsilon<\frac{1}{4N+2}$, there exists $h_0>0$, $\delta>0$, $C>0$, such that for $0<h<h_0$, $|\Re z-1|\leq \delta$, $|\Im z|\leq \delta h^{ \frac{4N+1}{4N+2}+\epsilon }$, such that for any $u\in C^{\infty}(\mathbb T)$, we have 
\begin{equation*}
    \|u\|_{L^2}\leq Ch^{ -\frac{4N+1}{4N+2}-\epsilon }\|\mathcal P(h,z)\|_{L^2}.
\end{equation*}
\end{prop}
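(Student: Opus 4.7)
My plan is to follow the four-step architecture of Proposition~\ref{prop: semi-loc} (damping estimate, propagation, elliptic, combine), replacing Step 2 by a propagation argument with $h$-dependent cutoffs concentrated at scale $h^\alpha$ around each zero of $\chi$, where $\alpha := 1/(4N+2)$. Steps 1, 3, and 5 of Proposition~\ref{prop: semi-loc}---yielding $\lA \chi u\rA_{L^2}^2 \les h^{-1/2}\lA\mathcal{P}u\rA_{L^2}\lA u\rA_{L^2}$, the elliptic estimates for the low/high-frequency projections $G_0u, G_\infty u$, and the passage from real to complex $z$---transfer essentially verbatim. The choice $\alpha = 1/(4N+2)$ is forced by balancing the two competing exponents $-2N\alpha - 1/2$ (from direct estimation on the region where $\chi$ is bounded below) and $\alpha - 1$ (from scaled propagation): they become equal at $\alpha = 1/(4N+2)$, with common value $-(4N+1)/(4N+2)$.

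Let $I := \bigcup_k[x_k - h^\alpha, x_k + h^\alpha]$. On the good region $\mathbb{T}\setminus I$ the finite-degeneracy hypothesis forces $\chi(x) \ge c\, h^{2N\alpha}$, so combining Step 1 with the H\"older commutator bound $\lA[\sqrt\chi, g_\pm(hD)]\rA_{L^2\to L^2} = O(h^{1-\epsilon})$ from Proposition~\ref{P:comm} (valid because $\sqrt\chi$ is Lipschitz) yields
\[
\int_{\mathbb{T}\setminus I}|g_\pm(hD) u|^2\,dx \les h^{-2N\alpha - 1/2}\lA\mathcal{P}u\rA\lA u\rA + h^\rho\lA u\rA^2 = h^{-(4N+1)/(4N+2)}\lA\mathcal{P}u\rA\lA u\rA + h^\rho\lA u\rA^2
\]
for some $\rho > 0$.

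For the bad region near each zero $x_k$, I run the propagation argument of Step 2 of Proposition~\ref{prop: semi-loc} in the $g_+$ direction with scaled cutoffs (the $g_-$ case is symmetric). Pick $x_k^+$ slightly to the right of $x_k$ with $\chi(x_k^+) > 0$, and set
\[
\psi_-^{(k)}(x) := C h^{-\alpha}\kappa\!\left(\tfrac{x - x_k}{h^\alpha}\right),\qquad \psi_+^{(k)}(x) := C' \kappa\!\left(\tfrac{x - x_k^+}{r}\right),
\]
with $\kappa$ a fixed nonnegative smooth bump, $r > 0$ small and fixed, and $C, C'$ tuned so that $\int\psi_+^{(k)} = \int\psi_-^{(k)}$ and the periodic primitive $\varphi_k$ of $\psi_+^{(k)} - \psi_-^{(k)}$ lies in $[1, 3]$. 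With $G_{+,k} := \varphi_k g_+(hD)$, the commutator identity from Step 2 of Proposition~\ref{prop: semi-loc}, after using $\chi \ge 0$ together with $\lA[G_{+,k}, \chi]\rA = O(h^{1-\epsilon})$ to dispose of the damping contribution, gives
\[
h\langle \varphi_k\psi_-^{(k)} g_+u, g_+u\rangle \les h\langle \varphi_k\psi_+^{(k)} g_+u, g_+u\rangle + \lA\mathcal{P}u\rA\lA u\rA + h^{3/2-\epsilon}\lA u\rA^2.
\]
Since $\psi_-^{(k)} \ge c\, h^{-\alpha}$ on $I_k := [x_k - h^\alpha, x_k+h^\alpha]$, the LHS dominates $c\, h^{1-\alpha}\int_{I_k}|g_+u|^2$ (modulo H\"older commutator errors from Proposition~\ref{P:comm}); since $\psi_+^{(k)}$ is supported where $\chi \ge c > 0$, the first RHS term is $\les h\int\chi|g_+u|^2 \les h^{1/2}\lA\mathcal{P}u\rA\lA u\rA$ by Step 1. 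Dividing by $h^{1-\alpha}$ produces
\[
\int_{I_k}|g_+(hD) u|^2 \les h^{\alpha-1}\lA\mathcal{P}u\rA\lA u\rA + h^{\rho'}\lA u\rA^2 = h^{-(4N+1)/(4N+2)}\lA\mathcal{P}u\rA\lA u\rA + h^{\rho'}\lA u\rA^2.
\]
Summing over $k$ and over $\pm$, combining with the good-region bound and the elliptic estimates, and absorbing the $h^{\min(\rho,\rho')}\lA u\rA^2$ remainder for small $h$ yields $\lA u\rA \les h^{-(4N+1)/(4N+2) - \epsilon}\lA\mathcal{P}u\rA$ as claimed.

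The main obstacle is controlling commutators involving the $h$-dependent cutoffs. A direct computation gives $\lA\sqrt{\psi_-^{(k)}}\rA_{C^{0,1/2}} \les h^{-\alpha}$ (one power of $h^{-\alpha/2}$ from the amplitude $h^{-\alpha}$ and one from the $h^\alpha$-rescaling), so Proposition~\ref{P:comm} only yields $\lA[g_+(hD), \sqrt{\psi_-^{(k)}}]\rA_{L^2\to L^2} \les h^{\delta - \alpha}$ for any $\delta < 1/2$. This commutator is of positive $h$-order precisely because $\alpha = 1/(4N+2) < 1/2$ for $N \ge 1$; the strict inequality $\delta < 1/2$ required by Proposition~\ref{P:comm} is the source of the arbitrarily small $\epsilon > 0$ loss in the stated bound.
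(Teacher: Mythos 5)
Your proposal takes a genuinely different route from the paper. The paper first reduces to the one‑sided operators $\mathcal P_\pm(h,z)=\pm hD-i\sqrt{h}z\chi-z^2$ (via $G_\pm$ localization plus elliptic estimates), and then exploits the first‑order ODE structure of $\mathcal P_\pm$: the integrating factor $e^{z\rho/\sqrt h}$ locates the resonances exactly, and an explicit Green's kernel (a Schur/integral‑kernel bound) handles the region near the zeros of $\chi$. You instead stay with $\mathcal P$ itself and run the commutator/propagation argument of Proposition~\ref{prop: semi-loc} with cutoffs concentrated at scale $h^{\alpha}$, $\alpha=1/(4N+2)$, around each zero, combined with the crude damping bound of Step 1 on the complementary ``good'' region where $\chi\ge ch^{2N\alpha}$. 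What the paper's route buys is explicitness (closed-form Green's kernel, closed-form resonance locations for $\mathcal P_\pm$, which also powers Theorem~\ref{thm: resonance}); what your route buys is a genuinely microlocal argument that does not use the 1D ODE structure at all, and is therefore more directly adaptable (e.g.\ to $\mathbb T^n$, $n\ge 2$, in the spirit of Remark~3 after Theorem~\ref{thm: decay}). After checking the exponent arithmetic, I believe your plan is sound: the balance $-2N\alpha-\tfrac12=\alpha-1=-\tfrac{4N+1}{4N+2}$ at $\alpha=1/(4N+2)$ is exactly what makes the good- and bad-region main terms match, and all the remainders come out of strictly positive $h$-order.

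One point of confusion in your write-up deserves flagging. You claim the $\epsilon$-loss in the exponent comes from the H\"older commutator bound $\lA[g_+(hD),\sqrt{\psi_-^{(k)}}]\rA\les h^{\delta-\alpha}$, $\delta<1/2$. That commutator is in fact never needed: the lower bound
\[
h\bigl\langle \varphi_k\psi_-^{(k)}\,g_+(hD)u,\ g_+(hD)u\bigr\rangle
=h\int_{\mathbb T}\varphi_k\psi_-^{(k)}\,|g_+(hD)u|^2\,dx
\ \ge\ c\,h^{1-\alpha}\int_{I_k'}|g_+(hD)u|^2\,dx
\]
is a consequence of the pointwise nonnegativity and the pointwise lower bound $\varphi_k\psi_-^{(k)}\ge ch^{-\alpha}$ on $I_k'$; no commutation of $\sqrt{\psi_-^{(k)}}$ past $g_+(hD)$ is required (unlike in Step~2 of Proposition~\ref{prop: semi-loc}, where the goal was to extract $\lA g_+(hD)(bu)\rA^2$ rather than $\int_{I_k'}|g_+(hD)u|^2$). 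The commutators that \emph{do} enter are $[G_{+,k},\chi]=\varphi_k[g_+(hD),\chi]=O_{L^2\to L^2}(h)$ and $[\sqrt\chi,g_\pm(hD)]=O_{L^2\to L^2}(h^{\alpha'})$ for any $\alpha'<1$ (Proposition~\ref{P:comm}, since $\sqrt\chi$ is Lipschitz because the zeros of $\chi$ have even order $\ge 2$); both produce error terms in $\lA u\rA^2$ of strictly positive $h$-order, which are absorbed. As a result, the argument as you have set it up actually produces the \emph{exact} exponent $h^{-(4N+1)/(4N+2)}$ with no $\epsilon$-loss, which is stronger than the stated Proposition. The $\epsilon$ in the paper's statement is an artifact of its method (the coupling between the estimates for $\lA\psi_k u\rA$ and $\lA\varphi_1 u\rA$ forces the second-microlocal scale strictly below $h^{1/(4N+2)}$); it is not intrinsic to the result, and your decoupled architecture avoids it.

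Two small bookkeeping remarks to keep the write-up airtight: (i) define the ``bad'' intervals as $I_k'=[x_k-h^\alpha/2,\,x_k+h^\alpha/2]$ (where $\psi_-^{(k)}\ge ch^{-\alpha}$) and the ``good'' region as $\mathbb T\setminus\bigcup_k I_k'$, so the two regions actually cover $\mathbb T$ and the lower bound $\chi\ge c\,h^{2N\alpha}$ from Lemma~\ref{lem: local-min} holds on the good region; (ii) make explicit that the inequality from Step~1 being used is $\langle\chi u,u\rangle\les h^{-1/2}\lA\mathcal Pu\rA\lA u\rA$, i.e.\ $\lA\sqrt\chi u\rA^2$, not $\lA\chi u\rA^2$ (the latter would produce $h^{-4N\alpha-1/2}$ rather than $h^{-2N\alpha-1/2}$ on the good region and destroy the balance). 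Your reference to $[\sqrt\chi,g_\pm(hD)]$ indicates you already intend $\lA\sqrt\chi u\rA^2$; it is just worth stating.
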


We reduce the proof of the resolvent bound for $\mathcal P$ to the resolvent bounds for $\mathcal P_{\pm}$. The basic idea is that near the characteristic set of $\mathcal P$, the frequency is comparable to $\pm 1$, hence we can ``replace'' $\mathcal P$ by $\mathcal P_{\pm}$. Away from the characteristic set of $\mathcal P$, we have the ellipticity of $\mathcal P$. $\mathcal P_{\pm}$ is easier to analyze as it is an ordinary differential operator and we can use integrating factors to simplify $\mathcal P_{\pm}$.

\begin{figure}[b]
    \includegraphics[scale=0.6]{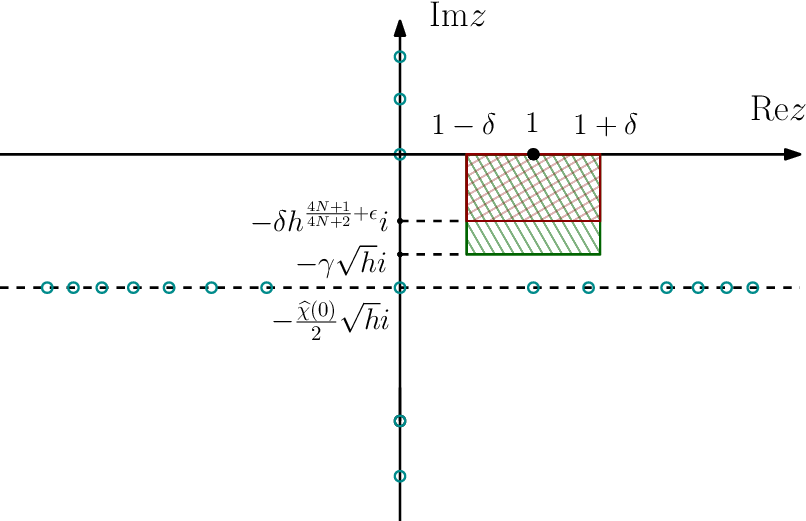}
    \caption{Resonances of $\mathcal P_{\pm}(h,z)$, and $z$-domains of estimations in Proposition \ref{prop: finite-p} (red) and Proposition \ref{prop: finite-pmp} (green).}
    \label{fig: ppm-res}
\end{figure}

\begin{prop}
\label{prop: finite-pmp}
Suppose $\chi$ has finite degeneracy as in Definition \ref{def: nondeg-chi} and $\Pr_{\pm}$ are as in \eqref{eq: ppm-def}. 
Then
    \begin{enumerate}[1.]
        \item For $h>0$, all resonances of $\mathcal P_{\pm}(h,z)$ lie on the lines 
        \[ \Re z=0 \text{ or } \Im z =-\frac{\widehat{\chi}(0)}{2}\sqrt{h}i.\]

        \item For any $0<\gamma<\frac{\widehat{\chi}(0)}{2}$, there exists $C>0$ such that for any $|\Re z-1|\leq \delta$, $\Im z\geq -\gamma\sqrt{h}$, and any $u\in C^{\infty}(\mathbb T)$, we have
        \begin{equation*}
            \|u\|_{L^2}\leq Ce^{\frac{C}{\sqrt{h}}}\|\mathcal P_{\pm}(h,z)u\|_{L^2}.
        \end{equation*}

        \item For any $0<\epsilon<\frac{1}{4N+2}$, there exists $h_0>0$, $\delta>0$, such that for $0<h<h_0$, $|\Re z-1|\leq \delta$, $|\Im z|\leq \delta h^{\frac{4N+1}{4N+2}+\epsilon }$, there exists $C>0$ such that for any $u\in C^{\infty}(\mathbb T)$, we have 
\begin{equation*}
    \|u\|_{L^2}\leq Ch^{ -\frac{4N+1}{4N+2}-\epsilon }\|\mathcal P_{\pm}(h,z)\|_{L^2}.
\end{equation*}
    \end{enumerate}
\end{prop}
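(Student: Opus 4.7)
The operator $\mathcal{P}_{\pm}(h,z)$ is a first-order ordinary differential operator on $\mathbb{T}$, so all three parts follow from careful analysis of its integrating-factor representation; I treat $\mathcal{P}_+$, the case $\mathcal{P}_-$ being symmetric. For Part 1, directly solving $\mathcal{P}_+ u=0$ gives the Floquet form $u(x)=u(0)\exp(iz^2x/h - z\Phi(x)/\sqrt{h})$ with $\Phi(x)=\int_0^x\chi$, which is $2\pi$-periodic iff $z^2 + i\widehat{\chi}(0)\sqrt{h}\,z - kh = 0$ for some $k\in\mathbb{Z}$. The discriminant $h(4k-\widehat{\chi}(0)^2)$ is positive exactly when $k>\widehat{\chi}(0)^2/4$, placing both roots on $\{\Im z=-\widehat{\chi}(0)\sqrt{h}/2\}$; otherwise the roots lie on $\{\Re z=0\}$, which proves Part 1.

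For Parts 2 and 3, the inhomogeneous equation $\mathcal{P}_+u=f$ is integrated with $\mu(x)=\exp(z\Phi(x)/\sqrt{h} - iz^2 x/h)$, yielding
\begin{equation*}
u(x) = \mu(x)^{-1}u(0) + \tfrac{i}{h}\mu(x)^{-1}\!\int_0^x\!\mu(y)f(y)\,dy, \quad u(0) = \frac{i/h}{\mu(2\pi)-1}\int_0^{2\pi}\!\mu(y)f(y)\,dy.
\end{equation*}
For Part 2, the hypothesis $\Im z\ge -\gamma\sqrt{h}$ with $\gamma<\widehat{\chi}(0)/2$ gives $\log|\mu(2\pi)|\ge (\widehat{\chi}(0)-2\gamma)\Re z/\sqrt{h}\gg 0$, so $|\mu(2\pi)-1|^{-1}\le 2|\mu(2\pi)|^{-1}$; combined with the crude pointwise bound $|\mu|^{\pm1}\le e^{C/\sqrt{h}}$ and Cauchy--Schwarz applied to each integral, this yields $\|u\|_{L^2}\le Ce^{C/\sqrt{h}}\|f\|_{L^2}$.

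Part 3 is the main obstacle. The plan is to prove the resolvent bound first at real $z\simeq 1$ and then extend into the strip by a Neumann-series perturbation. At real $z$ the key is to place the base point of the integrating factor at a zero $x_*$ of $\chi$ of maximal order $2N$; after shifting $x_*$ to the endpoints of $[0,2\pi]$, $\Phi$ vanishes to order $2N+1$ at both, so the natural length scale on which $|\mu|^{-1}$ and $|\mu|/|\mu(2\pi)|$ are not exponentially small in $1/\sqrt{h}$ is $\sim h^{1/(4N+2)}$. A direct local expansion gives
\begin{equation*}
\int_0^{2\pi}|\mu|^{-2}\,dy \;\lesssim\; h^{\tfrac{1}{4N+2}}, \qquad \int_0^{2\pi}\bigl|\mu/\mu(2\pi)\bigr|^2\,dy \;\lesssim\; h^{\tfrac{1}{4N+2}},
\end{equation*}
whence the boundary term satisfies $\|\mu^{-1}u(0)\|_{L^2}\lesssim h^{-(4N+1)/(4N+2)}\|f\|_{L^2}$. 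For the Volterra part, Schur's test applied to $K(x,y)=\mu(y)/\mu(x)\mathbf{1}_{y\le x}$ yields both $\sup_x\int|K|dy$ and $\sup_y\int|K|dx$ of order $h^{1/(4N+2)}$ by the same frozen-in-a-window analysis at every zero (zeros of order $2N_k<2N$ contribute a strictly smaller width $h^{1/(4N_k+2)}$, which only improves the estimate), so $\|T\|_{L^2\to L^2}\lesssim h^{1/(4N+2)}$ and the Volterra term is of the same size.

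To reach $|\Im z|\le\delta h^{(4N+1)/(4N+2)+\epsilon}$, write $\mathcal{P}_+(h,z)=\mathcal{P}_+(h,\Re z)+R$ with $\|R\|_{L^2\to L^2}=O(|\Im z|)$ and run a Neumann series: since $\|R\|\cdot\|\mathcal{P}_+(h,\Re z)^{-1}\|=O(\delta h^\epsilon)<1/2$ for $h<h_0(\delta,\epsilon)$, the stated resolvent estimate follows, with the $\epsilon$ in the exponent being a soft loss from the perturbation step. The delicate point throughout is uniformity of the concentration analysis: where $x$ sits far from every zero, the effective width collapses to $\sqrt h\le h^{1/(4N+2)}$, which only improves Schur's test; inside an $h^{1/(4N+2)}$-window of $x_*$, the Taylor expansion $\chi\sim c(x-x_*)^{2N}$ produces exactly the scaling used above, and the absence of any other zero of order equal to $2N$ adjacent to $x_*$ is what makes the base-point choice dominate the final estimate.
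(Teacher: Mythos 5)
Your proposal is correct, and in Part 3 it genuinely diverges from the paper's method. For Parts 1 and 2 you and the paper are in the same territory: the paper conjugates $\mathcal{P}_+$ by $e^{z\rho/\sqrt{h}}$ with $\rho=\int_0^x(\chi-\widehat\chi(0))$ to reduce to a constant-coefficient operator and reads off the resonances; you solve the homogeneous equation directly, but the quadratic condition $z^2+i\widehat\chi(0)\sqrt{h}\,z-kh=0$ is the same. The real difference is in Part 3. The paper introduces a second microlocalization: cutoffs $\psi_k$, $\varphi_1$ at scale $h^{\beta}$ around the zeros of $\chi$, an elliptic estimate on $\varphi_1 u$ from the lower bound $\chi\gtrsim h^{2N\beta}$, a crude bound $|K|\le 1$ on the Volterra kernel inside each $\psi_k$-window (using only $\chi\ge 0$, not the Taylor expansion), and commutator error terms proportional to $h^{1/2-(2N+1)\beta}\|u\|$; balancing these forces $\beta<\tfrac{1}{4N+2}$ and produces the $\epsilon$ loss in the final exponent. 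You instead bypass the cutoffs entirely: you write the exact Green's function, and the decisive estimate is Schur's test on $K(x,y)=\mu(y)/\mu(x)\mathbf{1}_{y\le x}$ with the quantitative vanishing $\Phi(x)-\Phi(y)\sim|x-x_k|^{2N_k+1}$ near each zero, giving $\sup_x\int|K|\,dy\sim h^{1/(4N+2)}$ with no cutoff error to absorb. This gives the cleaner bound $\|\mathcal{P}_+(h,\Re z)^{-1}\|\lesssim h^{-(4N+1)/(4N+2)}$ at real $z$, and the Neumann series into the strip costs nothing either — your parenthetical about a ``soft loss'' from the perturbation is in fact unnecessary, since $\|R\|\cdot\|\mathcal{P}_+(h,\Re z)^{-1}\|=O(\delta h^{\epsilon})<1/2$ simply doubles the constant. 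So your version actually proves a marginally sharper statement than the proposition. The trade-off is that the paper's cutoff scheme parallels the argument for $\mathcal{P}$ itself (Proposition \ref{prop: finite-p}) and is more robust to perturbations that destroy the explicit Green's function, while yours is tied to the ODE structure.

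One small technical imprecision to fix in Part 2: the pointwise bound $|\mu(x)|^{\pm 1}\le e^{C/\sqrt h}$ is false as stated for $\Im z\gg\sqrt h$, since $\log|\mu(x)|=\Re z\,\Phi(x)/\sqrt h+2\Re z\,\Im z\, x/h$ can be of size $\Im z/h$. What you actually need — and what does hold uniformly for $\Im z\ge-\gamma\sqrt h$ — is that the \emph{ratios} appearing in the estimate are controlled: $|\mu(y)/\mu(x)|\le e^{C/\sqrt h}$ for $y\le x$ (the oscillation of $\log|\mu|$ is $O(1/\sqrt h)$ since $\Phi-2\gamma'(\cdot)$ has bounded variation), and $|\mu(y)/\mu(2\pi)|\le e^{C/\sqrt h}$ uniformly. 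With that rephrasing, your Cauchy--Schwarz step closes correctly over the full half-plane.
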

\begin{proof}[Proof of Proposition \ref{prop: finite-p} using Proposition \ref{prop: finite-pmp}]

Let $G_0$, $G_{\infty}$, $G_{\pm}$ be as in the proof of Proposition \ref{prop: semi-loc}. Then for $z\in (1-\delta, 1+\delta)$, $u\in C^{\infty}(\mathbb T)$, we have 
    \[ G_0(h|D|-z^2)u = G_0\mathcal P(h,z)u-iz\sqrt{h}G_0 u. \]
    Thus for $\delta>0$ sufficiently small, using Plancherel theorem, we have 
    \begin{equation}
    \label{eq: g0-est}
    \|G_0 u\|_{L^2}\les \|\mathcal P(h,z)u\|_{L^2}+\sqrt{h}\|u\|_{L^2}. 
    \end{equation}
    A similar argument shows that 
    \begin{equation}\label{eq: gi-est} 
    \|G_{\infty} u\|_{L^2}\les \|\mathcal P(h,z)u\|_{L^2}+\sqrt{h}\|u\|_{L^2}. 
    \end{equation}

    Notice that we have the identity 
    \[ \mathcal P_{\pm}(h,z)G_{\pm} = G_{\pm}\mathcal P(h,z) - iz\sqrt{h}[\chi,G_{\pm}]. \]
    Use Proposition \ref{prop: finite-pmp} and we have 
    \[\begin{split} 
    \|G_{\pm}u\|_{L^2}
    \les & h^{-\frac{4N+1}{4N+2}-\epsilon}\|\mathcal P_{\pm}(h,z)G_{\pm}u\|_{L^2} \\
    \les & h^{-\frac{4N+1}{4N+2}-\epsilon}\|G_{\pm}\mathcal P(h,z)\|_{L^2}+h^{\frac{N+1}{2N+1} -\epsilon }\|h^{-1}[\chi, G_{\pm}]u\|_{L^2}. 
    \end{split}\]
    Since $h^{-1}[\chi,G_{\pm}]\in \Psi^{-1}(\mathbb T)$, we know $\|h^{-1}[\chi,G_{\pm}]\|_{L^2\to L^2}\leq C$. Therefore
    \begin{equation}
    \label{eq: gpm-est}
    \|G_{\pm}u\|_{L^2}\les h^{-\frac{4N+1}{4N+2}-\epsilon}\|\mathcal P(h,z)u\|_{L^2}+h^{\frac{N+1}{2N+1}-\epsilon}\|u\|_{L^2}. 
    \end{equation}
    Gathering estimates \eqref{eq: g0-est}, \eqref{eq: gi-est}, \eqref{eq: gpm-est}, we find that 
    \[ \|u\|_{L^2}\les h^{-\frac{4N+1}{4N+2}-\epsilon}\|\mathcal P(h,z)u\|_{L^2}+\sqrt h\|u\|_{L^2}. \]
    Thus when $h$ is sufficiently small, we have 
    \[ \|u\|_{L^2}\les h^{-\frac{4N+1}{4N+2}-\epsilon} \|\mathcal P(h,z)u\|_{L^2}. \]
    This proves Proposition \ref{prop: finite-p} when $z\in (1-\delta,1+\delta)$. 
    The proof of Proposition \ref{prop: finite-p} is completed by applying the same triangle inequality argument as in the last step of the proof of Proposition \ref{prop: semi-loc} when $z$ is complex.
\end{proof}

The rest of this section is devoted to proving Proposition \ref{prop: finite-pmp}, that is, the spectral gap and the resolvent bound for $\mathcal P_{\pm}$. The main idea is to consider a second microlocalization near zeros of $\chi$: away from the zeros of $\chi$, $\chi$ has lower bounds; near zeros of $\chi$, we use the smallness of the second microlocalization and the explicit Green's formula for $\mathcal P_{\pm}$.

We start by recording a property of $\chi$ when it has finite degeneracy.
\begin{lemm}
\label{lem: local-min}
    Let $\chi$ be as in Definition \ref{def: nondeg-chi}, then there exists $0<C_1<C_2$, $\delta>0$, such that 
    \begin{enumerate}[1.]
    \item For any $|x-x_k|<2\delta$, $1\leq k\leq n$, we have 
    \[ C_1\leq \frac{\chi(x)}{(x-x_k)^{2N_k}}\leq C_2, \ C_1\leq \frac{\chi^{\prime}(x)}{(x-x_k)^{2N_k-1}} \leq C_2.\]
    \item If $y\in \mathbb T$ is a local minimum of $\chi$ such that $\chi(y)\neq 0$, then we have 
    \[ \chi(y)>10\max_{x\in \cup_k[x_k-2\delta, x_k+2\delta]} \chi(x). \]
    \end{enumerate}
\end{lemm}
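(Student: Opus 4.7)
The plan is to derive both conclusions from Taylor expansion at each zero $x_k$ together with a compactness argument, choosing $\delta$ in two stages: first to secure the quantitative comparisons in (1), and then to shrink it further so as to force the separation in (2). Throughout I write $N_{\min} := \min_k N_k \geq 1$.

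For part (1), I will apply Taylor's theorem with integral remainder at each $x_k$. The vanishing conditions $\chi^{(\ell)}(x_k) = 0$ for $\ell < 2N_k$ yield the factorization
\[
\chi(x) = (x - x_k)^{2N_k} R_k(x), \qquad
R_k(x) := \int_0^1 \frac{(1-t)^{2N_k - 1}}{(2N_k - 1)!}\, \chi^{(2N_k)}\!\bigl(x_k + t(x - x_k)\bigr)\, dt,
\]
with $R_k(x_k) = \chi^{(2N_k)}(x_k)/(2N_k)! > 0$, together with the analogous identity $\chi'(x) = (x - x_k)^{2N_k - 1} S_k(x)$ and $S_k(x_k) = \chi^{(2N_k)}(x_k)/(2N_k - 1)! > 0$. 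By continuity of $R_k$ and $S_k$, there exists $\delta_0 > 0$ (depending on $k$) on which both lie between fixed positive constants. Repeating this at the finitely many zeros and intersecting the neighborhoods produces a common $\delta_0$ together with $0 < C_1 \leq C_2$ for which (1) holds with $\delta = \delta_0$.

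For part (2), the lower bound $\chi'(x)/(x - x_k)^{2N_k - 1} \geq C_1 > 0$ obtained in the previous step forces $\chi'$ to have the same sign as $(x - x_k)^{2N_k - 1}$ on $(x_k - 2\delta_0, x_k + 2\delta_0)$, so $\chi$ is strictly decreasing to the left of $x_k$ and strictly increasing to the right. Hence the unique local minimum of $\chi$ in this neighborhood is $x_k$ itself, where $\chi$ vanishes. Consequently any local minimum $y$ of $\chi$ with $\chi(y) \neq 0$ lies in the compact set $K_0 := \mathbb{T} \setminus \bigcup_k (x_k - 2\delta_0, x_k + 2\delta_0)$, on which the continuous function $\chi$ is strictly positive; set $m := \min_{K_0} \chi > 0$. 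Combined with the upper bound $\max_{x \in [x_k - 2\delta, x_k + 2\delta]} \chi(x) \leq C_2 (2\delta)^{2N_k} \leq C_2 (2\delta)^{2N_{\min}}$ supplied by (1) for any $\delta \leq \delta_0$, I now reduce $\delta$ one last time so that $10 \, C_2 (2\delta)^{2N_{\min}} < m$. Because $K_0$ has been defined with respect to the frozen $\delta_0$ rather than the final $\delta$, the lower bound $\chi(y) \geq m$ on local minima is unaffected by this shrinking, and (2) follows.

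No step is a genuine obstacle; the only care required is the staged choice of $\delta$, in particular freezing $m$ with respect to $\delta_0$ before reducing $\delta$ further to overcome the factor of $10$. Once Lemma \ref{lem: local-min} is in place, both the two-sided comparison $\chi(x) \sim (x-x_k)^{2N_k}$ near each zero and the comparable bound on $\chi'$ are exactly what is needed to feed into the explicit Green's function estimates for $\mathcal{P}_{\pm}$ and the second microlocalization arguments underlying Proposition \ref{prop: finite-pmp}.
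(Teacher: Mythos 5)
Your proof is correct and follows the same strategy as the paper's: Taylor expansion at each zero $x_k$ for the two-sided comparisons in (1), then compactness of the set away from the zeros to produce a positive lower bound on $\chi$ at local minima, finishing by shrinking $\delta$. The staged choice of $\delta$ (freezing $\delta_0$ and $m$ before the final shrink) and the explicit monotonicity argument locating positive local minima outside $\cup_k(x_k-2\delta_0, x_k+2\delta_0)$ make your write-up more careful than the paper's terse ``the second claim can be achieved by shrinking the value of $\delta$,'' but the underlying ideas are identical.
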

\begin{proof}
    The first conclusion follows from Taylor expansion
    \[ \chi(x)=\frac{1}{(2N_k)!}\chi^{(2N_k)} (x_k)(x-x_k)^{2N_k}+O(|x-x_k|^{2N_k+1}), \ |x-x_k|\to 0 \]
    and a similar expansion for $\chi^{\prime}$. 

    For the second claim, we notice that $\chi$ is a non-vanishing continuous function on $J:=\mathbb T\setminus \cup_{k} (x_k-2\delta,x_k+2\delta)$. Since $J$ is compact, there exists $c>0$ such that $\chi(x)>c$ for $x\in J$. Thus the second claim can be achieved by shrinking the value of $\delta$.
\end{proof}

\begin{proof}[Proof of Proposition \ref{prop: finite-pmp}]
    We only prove for $\mathcal P_+$, the proof for $\mathcal P_-$ is similar.

    {\bf 1. Resonances of $\mathcal P_{\pm}$.}

    To study the resonances of $P_+$, we introduce the integrating factor 
    \[ e^{\frac{z}{\sqrt{h}}\rho(x)}\in C^{\infty}(\mathbb T), \ \rho(x):=\int_0^x(\chi(y)-\widehat{\chi}(0))dy \in C^{\infty}(\mathbb T). \]
    One can check that 
    \begin{equation*}
    \mathcal P_+(h,z) = e^{-\frac{z}{\sqrt{h}} \rho } \left( hD-iz\sqrt{h}\widehat{\chi}(0)-z^2 \right) e^{\frac{z}{\sqrt{h}} \rho}. 
    \end{equation*}
    Notice that $hD$ has eigenvalues $hk\in \mathbb Z$, and $-iz\sqrt{h}\widehat{\chi}(0)-z^2$ is a constant function. Therefore, for $h>0$, the resonances $z$ for $\mathcal P_+(h,z)$ must satisfy 
    \[ z^2+iz\sqrt{h}\widehat{\chi}(0)-hk=0, \ k\in \mathbb Z. \]
    We can solve 
    \begin{equation*}\begin{gathered}
        z=\left( -\frac{\widehat{\chi}(0)}{2}i\pm \sqrt{k-\frac{\widehat{\chi}(0)^2}{4}} \right)\sqrt{h}, \ k\geq \frac{\widehat{\chi}(0)^2}{4}; \\
        z=\left( -\frac{\widehat{\chi}(0)}{2}\pm \sqrt{\frac{\widehat{\chi}(0)^2}{4} -k } \right)\sqrt{h} i, \ k < \frac{\widehat{\chi}(0)^2}{4}.
    \end{gathered}\end{equation*}
    Thus resonances of $\mathcal P$ lie in $\{\Re z=0\}\cup \{ \Im z=-\frac{\widehat{\chi}(0)}{2}\sqrt{h} \}$.

    {\bf 2. Estimates for $\mathcal P_{\pm}$ away from zeros of $\chi$.}

    It suffices to prove the resolvent bound for $z\in (1-\delta, 1+\delta)$. When $z$ is complex we apply the same triangle inequality argument as in the last step of the proof of Proposition \ref{prop: semi-loc}. 

    Let $x_k$, $1\leq k\leq N$, be the zeros of $\chi$. For $r>0$, we denote 
    \[ I_k(r):=[x_k-r, x_k+r], \ I(r):=\bigcup_{k}I_k(r). \]
    Let $\varphi\in C^{\infty}(\mathbb T;[0,1])$ such that 
    \begin{equation*}
    \supp \varphi \subset [-2,2], \ \varphi|_{[-1,1]}=1.
    \end{equation*}
    For $\beta>0$, we introduce cut-off functions $\psi_k, \varphi_0, \varphi_1\in C^{\infty}(\mathbb T)$ 
    \begin{equation*}\begin{gathered}
    \psi_{k}(x):= \varphi\left( \frac{x-x_{k}}{h^{\beta}} \right), \  \varphi_0(x):=\sum_{k}\psi_{k}(x), \
    \varphi_1(x):=1-\sum_{k}\varphi \left( \frac{2(x-x_{k})}{h^{\beta}} \right). 
    \end{gathered}\end{equation*}
    Functions $\varphi_{0}$, $\varphi_1$ satisfies the following conditions
    \begin{equation*}
        \supp \varphi_0\subset  I(2 h^{\beta}), \ \varphi_0|_{I(h^{\beta})}=1; \ \supp \varphi_1\subset \mathbb T\setminus I(h^{\beta}/2), \ \varphi_1|_{\mathbb T\setminus I(h^{\beta})}=1.
    \end{equation*}
    \begin{figure}[b]
    \includegraphics[scale=0.6]{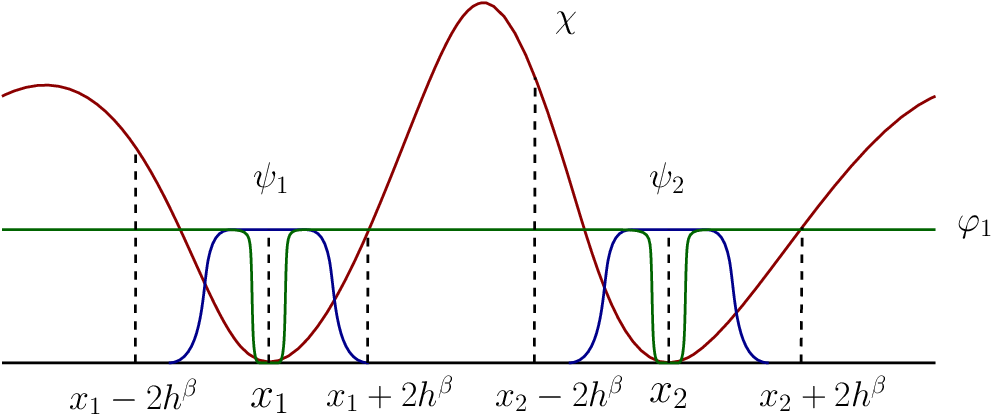}
    \caption{Second microlocalization near zeros of $\chi$.}
    \label{fig: secondmicro}
    \end{figure}
    
    We first consider the estimates for $\varphi_1 u$. Notice that 
    \[ \Im \langle \mathcal P_+(h,z)\varphi_1 u, \varphi_1 u \rangle =-z\sqrt{h} \int_{\mathbb T} \chi |\varphi_1 u|^2 dx \leq -Ch^{\frac12+2N\beta}\|\varphi_1 u\|_{L^2}^2.  \]
    Here we used the fact that 
    \[ \chi|_{\mathbb T\setminus I(h^{\beta}/2)}\geq ch^{2N\beta}, \ c>0, \ 0<h\ll 1, \]
    which follows from Lemma \ref{lem: local-min}.
    Now we find that
    \[\begin{split} 
    \|\varphi_1 u\|_{L^2}^2
    \leq & Ch^{-\frac12-2N\beta}|\langle \mathcal P_+(h,z)\varphi_1 u, \varphi_1 u \rangle| \\
    \leq & Ch^{-\frac12-2N\beta}\|\mathcal P_+(h,z)\varphi_1 u\|_{L^2}\|\varphi_1 u\|_{L^2}. 
    \end{split}\]
    Use Cauchy's inequality and we find 
    \[ \|\varphi_1 u\|_{L^2}\leq Ch^{-\frac12-2N\beta} \|\mathcal P_+(h,z)\varphi_1 u\|_{L^2}. \]
    Now notice that
    \[ \mathcal P_+(h,z)\varphi_1 u = \varphi_1 \mathcal P_+(h,z)u - ih \varphi_1^{\prime}u, \ |\varphi_1^{\prime}|\leq Ch^{-\beta}, \]
    and we conclude
    \begin{equation}
    \label{eq: p1-est}
        \|\varphi_1 u\|_{L^2}\leq C h^{-\frac12-2N\beta} \|\mathcal P_+(h,z)u\|_{L^2} + C h^{\frac12-(2N+1)\beta}\|u\|_{L^2}.
    \end{equation}

    {\bf 3. Estimates for $\mathcal P_{\pm}$ near zeros of $\chi$.}

    We now turn to estimate $\varphi_0 u$. We first solve 
    \[ \psi_k(x) u(x) = ih^{-1}\int_{x_k-2h^{\beta}}^x K(x,y)\mathcal P_+(h,z) \psi_k u(y) dy, \ K(x,y) = e^{-\frac{z}{\sqrt{h}}\int_{y}^x \chi(\theta) d\theta + \frac{i z^2}{h}(x-y) }. \]
    Since $\chi\geq 0$, we know $|K(x,y)|\leq 1$, when $x, y\in I_k(2h^{\beta})$, $y<x$. Hence 
    \[ |\psi_k u|\leq h^{-1}\int_{I_k(2h^{\beta})} |\mathcal P_+(h,z) \psi_k u(y) |dy \leq Ch^{-1+\frac{\beta}{2}}\|\mathcal P_+(h,z)\psi_ku\|_{L^2}. \]
    Integrate over $\mathbb T$ and notice that $\psi_k$ is supported in $I_k(2h^{\beta})$, and we find 
    \[ \|\psi_k u\|_{L^2}\leq Ch^{-1+\beta}\| \mathcal P_+(h,z)\psi_k u \|_{L^2}. \]
    Again notice 
    \[ \mathcal P_+(h,z)\psi_ku = \psi_k\mathcal P_+(h,z)u-ih\psi_k^{\prime} u, \ |\psi^{\prime}_k|\leq Ch^{-\beta}, \ \supp \psi_k^{\prime}\subset \{ \varphi_1=1\},  \]
    and we conclude that 
    \begin{equation*}
        \|\psi_ku\|_{L^2}\leq Ch^{-1+\beta}\|\mathcal P_+(h,z)u\|_{L^2}+C\|\varphi_1 u\|_{L^2}.
    \end{equation*}
    It follows by the definition of $\varphi_0$ now that 
    \begin{equation}
    \label{eq: p2-est}
        \|\varphi_0 u\|_{L^2}\leq Ch^{-1+\beta}\|\mathcal P_+(h,z)u\|_{L^2}+C\|\varphi_1 u\|_{L^2}.
    \end{equation}

    {\bf 4. End of the proof.}

    It remains to glue estimates \eqref{eq: p1-est} and \eqref{eq: p2-est} to obtain 
    \begin{equation*}
        \|u\|_{L^2}\leq C\max( h^{-1+\beta}, h^{-\frac12-2N\beta} )\|\mathcal P_+(h,z)u\|_{L^2} + Ch^{\frac12-(2N+1)\beta}\|u\|_{L^2}.
    \end{equation*}
    For $0<\epsilon<\frac{1}{4N+2}$, we take $\beta = \frac{1}{4N+2}-\epsilon>0$,
    then direct computations show that 
    \[\begin{gathered} 
    -1+\beta = -\tfrac{4N+1}{4N+2}-\epsilon < -\tfrac{4N+1}{4N+2}, \ -\tfrac12-2N\beta = -\tfrac{4N+1}{4N+2}+2N\epsilon>-\tfrac{4N+1}{4N+2}, \\ \tfrac12-(2N+1)\beta =  (2N+1)\epsilon>0.
    \end{gathered}\]
    Therefore we have 
    \begin{equation*}
        \|u\|_{L^2}\leq Ch^{-\frac{4N+1}{4N+2}-\epsilon}\|\mathcal P_+(h,z)u\|_{L^2}+Ch^{(2N+1)\epsilon}\|u\|_{L^2}.
    \end{equation*}
    For a fixed $\epsilon>0$, there exists $h_0=h_0(\epsilon)$, such that $Ch^{(2N+1)\epsilon}<\frac12$ when $0<h<h_0$, and in this situation we obtain
    \[ \|u\|_{L^2}\leq Ch^{-\frac{4N+1}{4N+2}-\epsilon}\|\mathcal P_+(h,z)u\|_{L^2}. \]
    This concludes the proof of Proposition \ref{prop: finite-pmp}.
\end{proof}

\section{Polynomial energy decay}
\label{ResToSG}

Here, for the benefit of the reader, we give a sketch proof of the equivalence between the resolvent estimates proved above and the corresponding energy and semigroup bounds for the damped wave equation.  This gives an overview of the works of \cite{anantharaman2014sharp,Batty-Duyckaerts} taking $A = |D|$ and $B = \sqrt{\chi}$, which builds on ideas developed in \cite{Lebeau96,LebeauRobbiano}.  Since we are working in a very explicit setting with relatively simple operators, connecting the resolvent estimate to the damped wave decay can be done in a fairly explicit manner. 

To start with, we rewrite the damped fractional wave equation as 
\begin{equation}
\label{eq: damp-sys}
    \partial_t U=\mathbf A U, \ U(0)=U_0, \ \mathbf A:=\begin{pmatrix} 0 & I \\ -|D| & -\chi \end{pmatrix}, \ U=\begin{pmatrix} u \\ \partial_t u \end{pmatrix},
\end{equation}
where $U_0=\begin{pmatrix} u_0 \\ u_1 \end{pmatrix}\in H^{\frac12}\times L^2$, $\mathbf A$ is an operator on $\mathscr H:=H^{\frac12}\times L^2$ with domain $\mathcal D(\mathbf A)=H^{1}\times H^{\frac12}$. The space $\mathscr H$ is equipped with a seminorm $\left|\begin{pmatrix} w_1 \\ w_2 \end{pmatrix}\right|_{\mathscr H}:=\||D|^{\frac12}w_1\|_{L^2}+\|w_2\|_{L^2}$. 

The following proposition established by Anantharaman--L\'eautaud \cite{anantharaman2014sharp} connects the resolvent estimates and energy decay rates of the damped fractional wave equation. 
\begin{prop}[{\cite[Proposition 2.4]{anantharaman2014sharp}}]
\label{prop: a-l}
Suppose $\chi\in L^{\infty}(\mathbb T)$. Let $P(\tau)$ be as in \eqref{eq: p-def}. Then for $\alpha>0$, the following statements are equivalent 
\begin{enumerate}[1.]
    \item[1.] There exists $C>0$ such that for any $(u_0, u_1)\in H^1(\mathbb T)\times H^{\frac12}(\mathbb T)$, there holds 
    \[ E(u,t)\leq \frac{C}{t^{2\alpha}}\left|\mathbf A \begin{pmatrix} u_0 \\ u_1 \end{pmatrix} \right|^2_{\mathscr H}. \]
    Notice that $ \left|\mathbf A \begin{pmatrix} u_0 \\ u_1 \end{pmatrix} \right|^2_{\mathscr H} \leq C\left(\||D|u_0\|_{L^2}^2+\|u_1\|_{H^{\frac12}}^2\right)\leq C\left( \|u_0\|_{H^1}^2+\|u_1\|_{H^{\frac12}}^2 \right) $.

    \item[2.] There exists $C>0$, such that for any $\tau\in \RR$, $|\tau|>C$, there holds
    \[ \|P(\tau)^{-1}\|_{L^2\to L^2}\leq C|\tau|^{\frac{1}{\alpha}-1}. \]
\end{enumerate}
\end{prop}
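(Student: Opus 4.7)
The plan is to reduce the equivalence to the abstract Borichev-Tomilov characterization of polynomial semigroup stability on Hilbert space, via two steps: first rewrite the damped equation as a first-order system with generator $\mathbf{A}$, and then relate the resolvent of $\mathbf{A}$ on $\mathscr{H}$ to the resolvent of $P(\tau)$ on $L^2$.

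First I would verify that $\mathbf{A}$ with domain $H^{1}\times H^{\frac12}$ generates a contraction $C_0$-semigroup $T(t)=e^{t\mathbf{A}}$ on the quotient Hilbert space $\mathscr{H}/\ker\mathbf{A}$, where $\ker\mathbf{A}=\{(c,0):c\in\CC\}$ is precisely the degeneracy locus of the seminorm. Dissipativity follows from $\RE\langle \mathbf{A}U,U\rangle_{\mathscr{H}}=-\lA\sqrt{\chi}\,v\rA_{L^2}^2\le 0$, surjectivity of $I-\mathbf{A}$ is elementary via Fourier series, and the fact that $i\RR\subset\rho(\mathbf{A})$ on the quotient follows from the spectral analysis of $P(-s)$ (an eigenvalue $\mathbf{A}U=isU$ forces $P(-s)u=0$, ruled out by the hypothesized resolvent bound). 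Lumer-Phillips then applies. Since $E(u,t)=|T(t)U_0|_{\mathscr{H}}^2$ and $\mathbf{A}$ is invertible on the quotient, statement (1) is equivalent to $\lA T(t)\mathbf{A}^{-1}\rA_{\mathscr{H}\to\mathscr{H}}=O(t^{-\alpha})$, and the Borichev-Tomilov theorem in the form developed by Batty-Duyckaerts \cite{Batty-Duyckaerts} yields
\[
\lA T(t)\mathbf{A}^{-1}\rA_{\mathscr{H}}=O(t^{-\alpha})\iff \lA (is-\mathbf{A})^{-1}\rA_{\mathscr{H}}=O(|s|^{1/\alpha}).
\]
So the proof reduces to establishing the norm equivalence $\lA(is-\mathbf{A})^{-1}\rA_{\mathscr{H}\to\mathscr{H}}\sim |s|\,\lA P(-s)^{-1}\rA_{L^2\to L^2}$ as $|s|\to\infty$.

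Solving $(is-\mathbf{A})(u,v)=(f,g)$ in $\mathscr{H}$ yields $v=isu-f$ and $P(-s)u=g+(is+\chi)f$, which makes the relationship explicit. For the direction resolvent-on-$\mathbf{A}$ $\Rightarrow$ resolvent-on-$P$, I would take $(f,g)=(0,F)$, so that $v=isu$ and $|(u,v)|_{\mathscr{H}}\ge \lA v\rA_{L^2}=|s|\lA u\rA_{L^2}$, giving $\lA P(-s)^{-1}\rA_{L^2}\le |s|^{-1}\lA(is-\mathbf{A})^{-1}\rA_{\mathscr{H}}$. The hard part, and the main obstacle, is the converse $\lA(is-\mathbf{A})^{-1}\rA_{\mathscr{H}}\lesssim |s|\,\lA P(-s)^{-1}\rA_{L^2}$: a naive application of the energy identity $\||D|^{1/2}u\|_{L^2}^2=s^2\|u\|_{L^2}^2+\RE\langle F,u\rangle$ combined with the crude bound $\lA F\rA_{L^2}\lesssim |s|\,|(f,g)|_{\mathscr{H}}$ loses an extra factor of $|s|$. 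To recover the sharp exponent, I would decompose $u$ into three Fourier zones relative to the characteristic frequency $|n|\sim s^2$: a low zone $|n|\le s^2/2$ and a high zone $|n|\ge 2s^2$, on which $|D|-s^2$ is elliptic with norm bounded below by $s^2/2$, giving uniformly bounded contributions to $|(u,v)|_{\mathscr{H}}$; and a middle zone $s^2/2\le |n|\le 2s^2$, where the Plancherel identity $|s|\,\lA f_{\mathrm{mid}}\rA_{L^2}\le \sqrt{2}\,\||D|^{1/2}f_{\mathrm{mid}}\|_{L^2}$ absorbs the offending $|s|$ factor from $isf$, so that $\lA F_{\mathrm{mid}}\rA_{L^2}\lesssim |(f,g)|_{\mathscr{H}}$. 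Combined with the hypothesis $\lA P(-s)^{-1}\rA_{L^2}\lesssim |s|^{1/\alpha-1}$ and the middle-zone relation $\||D|^{1/2}u_{\mathrm{mid}}\|_{L^2}\sim |s|\,\lA u_{\mathrm{mid}}\rA_{L^2}$, this produces the desired $|s|^{1/\alpha}|(f,g)|_{\mathscr{H}}$ bound on both $\||D|^{1/2}u\|_{L^2}$ and $\|v\|_{L^2}=\|isu-f\|_{L^2}$. The non-commutativity of $\chi$ with the frequency projections, which couples the zones, is controlled by commutator estimates in the spirit of Proposition~\ref{P:comm} and produces only lower-order perturbations in $|s|$.
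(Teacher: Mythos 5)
Your high-level framework matches the paper's: recast \eqref{eq: damp} as a first-order system generated by $\mathbf{A}$, mod out the kernel, verify dissipativity and apply Lumer--Phillips, invoke Borichev--Tomilov/Batty--Duyckaerts to convert the polynomial semigroup bound into a resolvent bound on $\mathbf{A}$, and then reduce to a bound on $P(\tau)^{-1}$. That much is correct and is exactly what the paper does via the orthogonal projection $\Pi_0$ and the identity \eqref{LebId}. Your derivation of the system $v=isu-f$, $P(-s)u=g+(is+\chi)f$ and of the easy inequality $\lA P(-s)^{-1}\rA\le |s|^{-1}\lA(is-\mathbf{A})^{-1}\rA$ is also fine.

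The gap is in your treatment of the hard direction $\lA(is-\mathbf{A})^{-1}\rA\lesssim |s|\lA P(-s)^{-1}\rA$. Your three-zone Fourier decomposition couples the zones through $\chi$, and you propose to control the cross terms by commutator bounds \emph{``in the spirit of Proposition~\ref{P:comm}.''} But Proposition~\ref{P:comm} requires $\chi\in C^{0,\beta}$ to produce the power $h^{\alpha}\sim s^{-2\alpha}$ that makes the coupling a small perturbation after multiplication by $|s|$, whereas Proposition~\ref{prop: a-l} as stated (and Anantharaman--L\'eautaud's Proposition~2.4, which it reproduces) assumes only $\chi\in L^{\infty}(\mathbb{T})$. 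For $\chi\in L^{\infty}$ the commutator $[\chi,\Pi_{\mathrm{mid}}]$ is merely bounded by $2\lA\chi\rA_{L^\infty}$, with no gain in $|s|$; the middle-zone equation $P(-s)u_{\mathrm{mid}}=F_{\mathrm{mid}}+is[\chi,\Pi_{\mathrm{mid}}]u$ then forces a bound of the form $\lA u_{\mathrm{mid}}\rA\lesssim M(\lA F_{\mathrm{mid}}\rA+|s|\,\lA u\rA)$, which does not close and loses the very factor of $|s|$ you were trying to recover. So your argument proves a weaker statement (valid for $\chi\in C^{0,\beta}$, $\beta>\frac12$, which does cover the paper's Theorem~\ref{thm: decay}), not the proposition as written.

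The paper, following~\cite{anantharaman2014sharp}, avoids any frequency localization of $\chi$: the Lebeau identity~\eqref{LebId} is an exact algebraic formula, and the passage from $\lA P(\tau)^{-1}\rA_{L^2\to L^2}$ to the $\mathscr{H}$-operator norm of~\eqref{LebId} is carried out with purely operator-theoretic estimates (energy identities for $P(\tau)$, the a priori bound $\tau\lA\sqrt{\chi}\,u\rA^2\le\lA P(\tau)u\rA\lA u\rA$, and duality), which never require regularity of $\chi$ beyond $L^\infty$. If you want a proof under the hypotheses stated, you should replace the frequency-decomposition step by these arguments; otherwise, either upgrade the hypothesis on $\chi$ in the statement, or supply a commutator bound that holds for $L^\infty$ damping, which Proposition~\ref{P:comm} does not.
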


One would like to use the semi-group of $\mathbf A$ to solve the damped wave system \eqref{eq: damp-sys}, hence it is important to understand the spectrum of $\mathbf A$. As we see below, the set of resonances of $\mathbf A$ is the same set of resonances of $P(\tau)$. Hence it suffices to study the resonances of $P(\tau)$.
\begin{lemm}
\label{lem: mero}   
    Suppose $\chi\in L^{\infty}(\mathbb T)$ and $P(\tau)$ is as in \eqref{eq: p-def}. Then the resolvent 
    \begin{equation*}
    P(\tau)^{-1}: L^2(\mathbb T)\to H^1(\mathbb T), \ \tau\in \CC
    \end{equation*}
    is a meromorphic family of operators with finite rank poles.
    Moreover, $P(\tau)^{-1}$ is holomorphic in the region $\CC\setminus \mathcal O$, where
    \[ \mathcal O:=\left( \left[-\|\chi\|_{L^{\infty}}, 0 \right]i\right) \cup \left( \RR\setminus \{0\} + \left[-\tfrac12\|\chi\|_{L^{\infty}},0\right]i \right). \]
\end{lemm}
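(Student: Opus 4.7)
\medskip
\noindent\textbf{Proof proposal.} The plan is the standard analytic Fredholm strategy: realize $P(\tau)$ as a holomorphic Fredholm family of index zero from $H^1(\mathbb T)$ to $L^2(\mathbb T)$, exhibit one point where it is invertible, invoke analytic Fredholm theory for meromorphy and finite-rank poles, and finally rule out poles outside $\mathcal O$ via a direct energy identity obtained by pairing $P(\tau)u=0$ with $u$.

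First, write $P(\tau)=(|D|+1)+K(\tau)$ with $K(\tau):=-i\tau\chi-(\tau^2+1)$. The principal part $|D|+1$ is an isomorphism $H^1(\mathbb T)\to L^2(\mathbb T)$ by Plancherel, since on the Fourier side it is multiplication by $|n|+1\ge 1$. Because $\chi\in L^\infty$, the perturbation $K(\tau)\colon L^2\to L^2$ is bounded, so $K(\tau)\colon H^1\hookrightarrow L^2\xrightarrow{K(\tau)}L^2$ is compact (the inclusion $H^1(\mathbb T)\hookrightarrow L^2(\mathbb T)$ is compact by Rellich), and it depends polynomially, hence holomorphically, on $\tau$. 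Thus $P(\tau)\colon H^1\to L^2$ is a holomorphic family of Fredholm operators of index $0$. For $\tau_0=is$ with $s>0$, one has $P(is)=|D|+s\chi+s^2$ and
\begin{equation*}
\RE\langle P(is)u,u\rangle=\||D|^{1/2}u\|_{L^2}^2+s\langle\chi u,u\rangle+s^2\|u\|_{L^2}^2\ge s^2\|u\|_{L^2}^2,
\end{equation*}
so $P(is)$ is injective, hence (by index $0$) invertible. The analytic Fredholm theorem then yields that $P(\tau)^{-1}\colon L^2\to H^1$ admits a meromorphic extension to all of $\mathbb C$ with finite-rank poles, and the poles are exactly those $\tau$ at which $P(\tau)$ has nontrivial kernel.

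It remains to show every pole lies in $\mathcal O$. Suppose $\tau_0=x+iy\in\mathbb C$ and $u\in H^1\setminus\{0\}$ with $P(\tau_0)u=0$. Pairing with $u$ and setting $E:=\||D|^{1/2}u\|_{L^2}^2\ge 0$, $M:=\int_{\mathbb T}\chi|u|^2\,dx\in[0,\|\chi\|_{L^\infty}\|u\|_{L^2}^2]$, and $N:=\|u\|_{L^2}^2>0$, expansion of $-i\tau_0=y-ix$ and $-\tau_0^2=(y^2-x^2)-2ixy$ gives
\begin{equation*}
\bigl[E+yM+(y^2-x^2)N\bigr]-ix(M+2yN)=0.
\end{equation*}
The imaginary part forces $x(M+2yN)=0$. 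If $x\ne 0$ then $y=-M/(2N)\in[-\tfrac12\|\chi\|_{L^\infty},0]$, placing $\tau_0$ in the slab $\mathbb R\setminus\{0\}+[-\tfrac12\|\chi\|_{L^\infty},0]i$. If $x=0$, the real part reads $E+yM+y^2N=0$; nonnegativity of $E$ and $y^2N$ forces $yM\le 0$, hence $y\le 0$, and then $y^2N\le -yM\le |y|\|\chi\|_{L^\infty}N$, so $|y|\le\|\chi\|_{L^\infty}$, putting $\tau_0$ in $[-\|\chi\|_{L^\infty},0]i$. In either case $\tau_0\in\mathcal O$, completing the proof.

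The only mildly delicate point is bookkeeping at the borderline values $\tau=0$ and $y=0$ (where $u$ can degenerate to a constant and $E=M=0$), but these are already contained in $\mathcal O$, so no separate argument is needed; the rest is a standard combination of Rellich compactness, analytic Fredholm theory, and the coercive energy identity above.
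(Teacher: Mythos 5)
Your proof is correct and follows the same overall strategy as the paper's: establish that $P(\tau)$ is a holomorphic Fredholm family with an invertible value, apply the analytic Fredholm theorem to get meromorphy with finite-rank poles, and then rule out resonances outside $\mathcal O$ by pairing $P(\tau_0)u=0$ with $u$ and examining the real and imaginary parts.

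The differences are ones of bookkeeping rather than substance, but they are worth noting. For the Fredholm part, the paper factors multiplicatively, writing $P(\tau)=\bigl(I-(i\tau\chi+\tau^2+\tau_0^2)(|D|+\tau_0^2)^{-1}\bigr)(|D|+\tau_0^2)$ so that the compact family $Q(\tau)$ acts on $L^2$ and analytic Fredholm theory applies verbatim in the $I+K$ form of \cite[Theorem C.8]{dyatlov2019mathematical}; you decompose additively as $(|D|+1)+K(\tau)$ with $K(\tau)$ compact from $H^1$ to $L^2$, which is equivalent but technically requires rewriting as $(|D|+1)(I+(|D|+1)^{-1}K(\tau))$ before invoking the same theorem (you should make that small step explicit, since the usual statement of analytic Fredholm theory is for the $I+$compact form). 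For the invertible base point, the paper bounds $\|Q(i\tau_0)\|<1$ and uses a Neumann series; you invoke coercivity of $\RE\langle P(is)u,u\rangle$ plus the index-zero property, which is a bit cleaner. For the resonance-free region, the paper treats $\lambda=0$ and $\lambda\neq 0$ by two separate arguments (a full coercivity estimate in the first case, a sign condition on $\chi+2\gamma$ from the imaginary part in the second), whereas you extract both conclusions from the single scalar identity $E+yM+(y^2-x^2)N - ix(M+2yN)=0$; your version is a bit more unified and has the advantage that the quantities $E,M,N$ make the constraints transparent. Both are valid, and yours is a pleasant streamlining of the same idea.
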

\begin{proof}
    Take $\tau_0\in \RR$, $\tau_0\neq 0$. We have the following resolvent identity 
    \begin{equation}
    \label{eq: rel-id}
        P(\tau) = (I-(i\tau\chi +\tau^2+\tau_0^2)(|D|+\tau_0^2)^{-1})(|D|+\tau_0^2).
    \end{equation}
    Notice that $|D|+\tau_0^2: H^1(\mathbb T)\to L^2(\mathbb T)$ is invertible. 
    Let 
    \[Q(\tau):=(i\tau\chi+\tau^2+\tau_0^2)(|D|+\tau_0^2)^{-1}: L^2(\mathbb T)\to L^2(\mathbb T), \tau\in\CC.\]
    Then $Q(\tau)$ is a family of compact operators, hence $I+Q(\tau)$ is a family of Fredholm operators. For $\tau_0\gg 1 $, we have 
    \begin{equation*}
        \|Q(i\tau_0)\|_{L^2(\mathbb T)\to L^2(\mathbb T)} \leq \|\chi\|_{L^{\infty}(\mathbb T)}/|\tau_0|<1.
    \end{equation*}
    Then for $\Im\tau\gg 1$, $I+Q(\tau)$ is a family of invertible Fredholm operators. The analytic Fredholm theory (see for instance \cite[Theorem C.8]{dyatlov2019mathematical}) implies that 
    \begin{equation*}
        (I+Q(\tau))^{-1}: L^2(\mathbb T)\to L^2(\mathbb T), \ \tau\in \CC
    \end{equation*}
    is a meromorphic family of operators with finite rank poles. This and \eqref{eq: rel-id} show that $P(\tau)^{-1}: L^2(\mathbb T)\to H^1(\mathbb T)$ is a meromorphic family of operators with finite rank poles.

    To see that $P(\tau)^{-1}$ is holomorphic in $\CC\setminus \mathcal O$, we write $\tau=\lambda+i\gamma$, $\lambda, \gamma\in \RR$, then
    \[ P(\lambda+i\gamma)=|D|-\lambda^2+\gamma(\chi+\gamma)-i\lambda (\chi+2\gamma). \]
    
    If $\lambda=0$, $\gamma>0$ or $\gamma<-\|\chi\|_{L^{\infty}}$, then there exists $c=c(\gamma)>0$ such that for $u\in H^1$, we have 
    \[ \langle P(i\gamma)u,u \rangle\geq \||D|^{\frac12}u\|^2_{L^2}+c\|u\|_{L^2}^2. \]
    Therefore $P(i\gamma)u=0$ implies $u=0$. Hence $\tau=\lambda+i\gamma$ can not be a resonance in this case.

    If $\lambda\neq 0$, $\gamma>0$ or $\gamma<-\frac12\|\chi\|_{L^{\infty}}$, then for $u\in H^1$, we compute 
    \[ \Im \langle P(\lambda+i\gamma)u, u \rangle = \lambda\langle (\chi+2\gamma)u,u \rangle \]
    Since either $\chi+2\gamma>0$ or $\chi+2\gamma<0$ for all $x\in \mathbb T$, we again see that $P(\lambda+i\gamma)u=0$ implies that $u=0$.
\end{proof}
The same result holds for $\mathbf A$. In fact,
\begin{lemm}
Let $\chi\in L^{\infty}(\mathbb T)$, $\mathbf A$ be as in \eqref{eq: damp-sys}. Then the resolvent 
\begin{equation*}
    (-i\tau I-\mathbf A)^{-1}: \mathscr H \to \mathcal D(\mathbf A), \ \tau\in \CC
\end{equation*}
is a meromorphic family of operators with finite rank poles and the poles (resonances) are exactly the resonances of $P(\tau)$. In particular, $0$ 
is a resonance of $\mathbf A$ and $\mathrm{Ker}_{\mathscr H}(\mathbf A)=\left\{\left. \begin{pmatrix} a \\ 0 \end{pmatrix} \ \right| \ a\in \CC \right\}$.
\end{lemm}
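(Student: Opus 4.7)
The plan is to obtain an explicit formula for $(-i\tau I-\mathbf A)^{-1}$ in terms of $P(\tau)^{-1}$, so that the meromorphy and finite-rank-pole properties transfer directly from Lemma \ref{lem: mero}, and then to set up a bijection between kernels at a resonance.

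First I would compute the resolvent by hand. Given $\begin{pmatrix} f\\g\end{pmatrix}\in\mathscr H$, solving $(-i\tau I-\mathbf A)\begin{pmatrix} u\\v\end{pmatrix}=\begin{pmatrix} f\\g\end{pmatrix}$ reduces by the first row to $v=-i\tau u-f$, and substituting into the second row gives
\begin{equation*}
P(\tau)u=g+(\chi-i\tau)f.
\end{equation*}
Since $f\in H^{1/2}$, $g\in L^2$, and $\chi\in L^\infty$, the right-hand side is in $L^2$, so by Lemma \ref{lem: mero} the formal inverse
\begin{equation*}
(-i\tau I-\mathbf A)^{-1}\begin{pmatrix} f\\g\end{pmatrix}=\begin{pmatrix} P(\tau)^{-1}\bigl(g+(\chi-i\tau)f\bigr)\\ -i\tau P(\tau)^{-1}\bigl(g+(\chi-i\tau)f\bigr)-f\end{pmatrix}
\end{equation*}
is well-defined, maps $\mathscr H\to H^1\times H^{1/2}=\mathcal D(\mathbf A)$, and is a meromorphic family of operators on $\CC$ with finite-rank poles, inheriting these properties from $P(\tau)^{-1}$.

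Next I would prove that the poles of $(-i\tau I-\mathbf A)^{-1}$ coincide (as a set) with the resonances of $P(\tau)$. One inclusion is immediate from the explicit formula above: any pole of the matrix entries must be a pole of $P(\tau)^{-1}$. For the other, I would show that if $\tau_0\in\mathscr R$ and $u_0\in\ker P(\tau_0)\setminus\{0\}$, then
\begin{equation*}
U_0:=\begin{pmatrix} u_0\\ -i\tau_0 u_0\end{pmatrix}\in\mathcal D(\mathbf A)
\end{equation*}
satisfies $(-i\tau_0 I-\mathbf A)U_0=\begin{pmatrix} 0\\ P(\tau_0)u_0\end{pmatrix}=0$, so $U_0$ is a nontrivial element of $\ker(-i\tau_0 I-\mathbf A)$ and $\tau_0$ is a pole of the resolvent. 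Conversely, this construction is bijective: if $(-i\tau_0 I-\mathbf A)\begin{pmatrix} u\\v\end{pmatrix}=0$ then $v=-i\tau_0 u$ and $P(\tau_0)u=0$, so the two kernels are naturally identified.

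Finally, specializing to $\tau=0$, we have $P(0)=|D|$, whose kernel on $\mathbb T$ consists precisely of the constants. By the identification above, $\ker_{\mathscr H}\mathbf A=\bigl\{\begin{pmatrix} a\\0\end{pmatrix}\ \big|\ a\in\CC\bigr\}$, in particular $0$ is a resonance of $\mathbf A$. The main subtlety, and what deserves careful checking, is simply keeping track of function spaces: one must verify that the right-hand side of the reduced equation lies in $L^2$ (which uses $\chi\in L^\infty$ and $f\in H^{1/2}$) and that the resulting $u,v$ lie in $H^1$ and $H^{1/2}$ respectively, so that the resolvent really maps into $\mathcal D(\mathbf A)$. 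No harder analysis is needed because all nontrivial analytic content is already packaged in Lemma \ref{lem: mero}.
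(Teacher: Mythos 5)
Your proposal is correct and takes essentially the same route as the paper, which simply recalls the identity~\eqref{LebId} from~\cite{Lebeau96}; you have fleshed out the derivation and added the clean bijection of kernels. One remark worth keeping: after simplifying your second component, the $(2,1)$ entry of the resolvent matrix is $-P(\tau)^{-1}|D| = P(\tau)^{-1}(-i\tau\chi-\tau^2 I)-I$, so the formula~\eqref{LebId} as printed appears to be missing the constant term $-I$ in that slot; this is entire in $\tau$ and hence does not affect the meromorphy, pole set, or rank of the poles, but your version is the one that actually satisfies $M(-i\tau I-\mathbf A)=I$.
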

\begin{proof}
It suffices to recall the resolvent identity
\begin{equation}
    \label{LebId}
(-i\tau I - \mathbf A)^{-1} = \begin{pmatrix}
    P(\tau)^{-1} (\chi -i\tau I) & P(\tau)^{-1} \\
    P(\tau)^{-1} (-i\tau \chi  - \tau^2 I) & -i\tau P(\tau)^{-1}
\end{pmatrix}.
\end{equation}
from \cite{Lebeau96}.
\end{proof}

We now sketch the proof of Proposition \ref{prop: a-l} below. For further details of the proof, we refer to \cite[\S 4]{anantharaman2014sharp}.

\begin{proof}[Sketch proof of Proposition \ref{prop: a-l}]
Notice that $0$ is the only resonance of $\mathbf A$ with a nonnegative imaginary part -- which correspond to the nondecaying part in the solution. Hence it is natural to split the eigenspace of $\tau=0$. For that, we let $\Pi_0: \mathscr H\to \mathscr H$ be the orthogonal projection onto $\mathrm{Ker}_{\mathscr H}(\mathbf A)$. We define 
\[ \mathring{\mathscr H}:=(I-\Pi_0)\mathscr H, \ \|U\|_{\mathring{\mathscr H}}:=|U|_{\mathscr H}, \ \mathring{\mathbf A}:=\mathbf A|_{\mathring{\mathscr H}}. \]
Then the solution to the damped wave system \eqref{eq: damp-sys} can be expressed in terms of the semi-group of $\mathring{\mathbf A}$:
\begin{equation}
\label{eq: semi-solution}
    U(t)=e^{t\mathring{\mathbf A}}(I-\Pi_0)U_0+\Pi_0 U_0.
\end{equation}
Notice now that 
\[ E(u,t)=|U(t)|^2_{\mathscr{H}} = \|e^{t\mathring{\mathbf A}} (I-\Pi_0)U_0 \|^2_{\mathring{\mathscr H}},  \]
Thus the first statement in Proposition \ref{prop: a-l} is equivalent to the following semi-group bound: there exists $C>0$ such that
\begin{equation}
\label{eq: semi-bound}
    \| e^{t \mathring{\mathbf A}} \mathring{ \mathbf A }^{-1} \|_{\mathring{\mathscr H}\to \mathring{\mathscr H}}\leq \frac{C}{t^{\alpha}}, \ t>C.
\end{equation}
We now recall the result \cite[Theorem 2.4]{borichev2010optimal} by Borichev--Tomilov and realize that \eqref{eq: semi-bound} is equivalent to the following resolvent estimate for $\mathring{\mathbf A}$: there exists $C>0$ such that 
\begin{equation}
\label{eq: circ-a}
    \|(-i\tau I-\mathring{ \mathbf A })^{-1}\|_{\mathring{ \mathscr H }\to \mathring{ \mathscr H } } \leq C|\tau|^{\frac{1}{\alpha}}, \ |\tau|>C.
\end{equation}

Use the following identity on $\mathscr H$ for $\tau\neq 0$
\[ (-i\tau I-\mathring{\mathbf A})^{-1}(I-\Pi_0) = (-i\tau I - \mathbf A)^{-1}(I-\Pi_0) = (-i\tau I-\mathbf A)^{-1} +\frac{\Pi_0}{i\tau} \]
and we find that
\begin{equation*}
    \left| \|(-i\tau I-\mathring{\mathbf A})^{-1}\|_{\mathring{\mathscr H} \to \mathring{ \mathscr H } } - \| (-i\tau I - \mathbf A )^{-1} \|_{\mathscr H\to \mathscr H}   \right|\leq \frac{C}{|\tau|}, \ |\tau|>1.
\end{equation*}
Thus \eqref{eq: circ-a} is equivalent to the bound for the resolvent of $\mathbf A$: there exists $C>0$ such that 
\begin{equation}
\label{eq: a-res}
    \|(-i\tau I-\mathbf A)^{-1}\|_{\mathscr H\to \mathscr H}\leq C |\tau|^{\frac{1}{\alpha}}, \ |\tau|>C.
\end{equation}

It remains to use \eqref{LebId} again to build the equivalence between \eqref{eq: a-res} and the second statement of Proposition \ref{prop: a-l}.
\end{proof}

\Remark
Iterating \eqref{eq: semi-bound} (see also \cite{Batty-Duyckaerts}), we have for $k\geq 1$,
\begin{equation*}
    \|e^{t\mathring{\mathbf A}} \mathring{\mathbf A}^{-k} \|_{\mathring{\mathscr H}\to \mathring{\mathscr H}} = \left\|\left( e^{\frac{t}{k} \mathring{\mathbf A} } \mathring{\mathbf A}^{-1} \right)^k\right\|_{\mathring{\mathscr H}\to \mathring{\mathscr H}} \leq \|e^{\frac{t}{k} \mathring{\mathbf A} }\mathring{\mathbf A}^{-1} \|^k_{\mathring{\mathscr H}\to \mathring{\mathscr H}} \leq \frac{ C^k k^{k\alpha} }{ t^{k\alpha}}.
\end{equation*}
Hence if we further assume $\chi\in C^{\frac{k}{2}}(\mathbb T)$ and $(u_0, u_1)\in H^{\frac{k+1}{2}}(\mathbb T) \times H^{\frac{k}{2}}(\mathbb T)$, then \eqref{eq: semi-solution} implies that 
\begin{equation*}
    E(u,t)\leq \frac{C_k}{t^{2k\alpha}} \left\|\mathring{\mathbf A}^k \begin{pmatrix} u_0 \\ u_1 \end{pmatrix}\right\|_{\mathring{\mathscr H}} \leq \frac{C_k}{ t^{2k\alpha}} \left( \|u_0\|_{H^{\frac{k+1}{2}}}^2 + \|u_1\|^2_{H^{\frac{k}{2}}} \right).
\end{equation*}
This means that the more regular the initial data is, the faster the energy decays.

\section{asymptotics of the resonances for small damping}
\label{sec:resdist}

This section is devoted to proving Theorem \ref{thm: resonance}. Since the first statement is a direct result of the resolvent bounds in Theorem \ref{thm: resolvent-est}, we focus on the proof of the second statement. The main tools we use are Grushin problems -- we refer to \cite{sjzw2007grushin} for an introduction and applications of Grushin problems.

Recall the notation
\begin{equation*}
    P(\nu,\tau)=|D|-i\nu\tau \chi-\tau^2: L^2(\mathbb T) \to L^2(\mathbb T), \ \nu>0.
\end{equation*}
The operator $P(0,\tau)$ has resonances $\pm \sqrt{k}$, $k\in \mathbb Z$, $k\geq 0$. Notice that if $\tau$ is a resonance of $P(\nu,\bullet)$, then $-\overline{\tau}$ is also a resonance of $P(\nu,\bullet)$. Thus we only need to consider the resonances $\tau$ with $\Re\tau\geq 0$.

\begin{proof}[Proof of the second statement of Theorem \ref{thm: resonance}]
For $k\in \mathbb Z$, $k>0$, we propose the following Grushin problem for $(\nu,\tau)$ in a neighborhood of $(0,\sqrt{k})\in \CC^2$:
\begin{equation*}
    \mathscr P(\nu,\tau):= 
    \begin{pmatrix}  P(\nu,\tau) & R_- \\ R_+ & 0 \end{pmatrix}: H^1(\mathbb T)\times \CC^2 \to L^2(\mathbb T)\times \CC^2,
\end{equation*}
where $R_{\pm}$ are given by 
\begin{equation*}\begin{gathered}
    R_-: \CC^2\to L^2(\mathbb T), \ R_-(a_1, a_1):=a_1 e^{ikx}+ a_2 e^{-ikx}, \\
    R_+: H^1(\mathbb T)\to \CC^2, \ R_+(v):=(\langle v,e^{ikx} \rangle, \langle v,e^{-ikx} \rangle).
\end{gathered}\end{equation*}
Here the inner product on $L^2(\mathbb T)$ is defined by 
\begin{equation*}
    \langle u,v \rangle:=\frac{1}{2\pi}\int_{\mathbb T} u(x)\overline{v(x)} dx.
\end{equation*}
For $k>0$, let $M_k(\tau)$ be the operator defined by 
\begin{equation*}
    M_k(\tau):=\Pi_{\perp}P(0,\tau)^{-1}\Pi_{\perp}: L^2(\mathbb T)\to L^2(\mathbb T), \ \tau \in \CC\setminus \{\pm \sqrt{\ell}\ | \ \ell\in \mathbb Z, \ \ell\geq 0 \}.
\end{equation*}
where $\Pi_{\perp}: L^2(\mathbb T)\to L^2(\mathbb T)$ is the orthogonal projection onto the orthogonal complement of $\mathrm{Ker}_{L^2}(P(0,\sqrt{k}))=\mathrm{span}(e^{ikx}, e^{-ikx})$. We have the following explicit formula for $M_k(\tau)$:
\begin{equation}
\label{eq: m-def}
    M_k(\tau)u(x)= \sum_{n\neq \pm k} \tfrac{a_n}{|n|-\tau^2}e^{inx}, \ \text{ when } u(x)=\sum_{n\in \mathbb Z} a_n e^{inx}.
\end{equation}
From \eqref{eq: m-def} we know $M_k(\tau)$ is in fact defined and holomorphic in a neighborhood of $\tau=\sqrt{k}$. Moreover, for $\tau$ near $\sqrt{k}$, $M_k(\tau): L^2(\mathbb T)\to H^1(\mathbb T)$ is a bounded operator. Let $\Pi_k$ be the orthogonal projection onto $\mathrm{Ker}_{L^2}(P(0,\sqrt{k}))$, then we have 
\begin{equation*}
    P(0,\tau)^{-1}=M_k(\tau)+\frac{\Pi_k}{k-\tau^2}.
\end{equation*}

A direct computation then shows that $\mathscr P(0,\tau)$ has an inverse 
\begin{equation*}
    \mathscr E(\tau):=\begin{pmatrix} M_k(\tau) & E_+ \\ E_- & E_{-+}(\tau)\end{pmatrix}: L^2(\mathbb T)\times \CC^2 \to H^1(\mathbb T)\times \CC^2,
\end{equation*}
where $E_{-+}(\tau)=(k-\tau^2)I_2$ and $E_{\pm}$ are given by 
\begin{equation*}\begin{gathered}
    E_+: \CC^2\to H^1(\mathbb T), \ E_+(a_1,a_2):=a_1 e^{ikx}+a_2 e^{-ikx}, \\
    E_-: L^2(\mathbb T) \to \CC^2, \ E_-(v):=(\langle v,e^{ikx} \rangle, \langle v,e^{-ikx} \rangle).
\end{gathered}\end{equation*}
Notice that
\[ P(\nu,\tau)-P(0,\sqrt{k}) = -i\nu\tau\chi+k-\tau^2. \]
Hence for $(\nu,\tau)$ in an open neighborhood of  $(0,\sqrt{k})\in \CC^2$, there holds
\begin{equation*}
    \max_{\nu,\tau}
    \begin{Bmatrix}
    \|M_k(\tau)(P(\nu,\tau)-P(0,\sqrt{k}))\|_{H^1\to H^1} \\
    \|(P(\nu,\tau)-P(0,\sqrt{k})M_k(\tau)\|_{L^2\to L^2} 
    \end{Bmatrix}
    <\frac12.
\end{equation*}
Now by \cite[Lemma C.3]{dyatlov2019mathematical}, we know $\mathscr P(\nu,\tau)$ is invertible for $(\nu,\tau)$ in a neighborhood of $(0,\sqrt{k})\in \CC^2$. We denote the inverse of $\mathscr P(\nu,\tau)$ by 
\begin{equation*}
    \mathscr E(\nu,\tau):=\begin{pmatrix} E(\nu,\tau) & E_+(\nu,\tau) \\ E_-(\nu,\tau) & E_{-+}(\nu,\tau) \end{pmatrix}: L^2(\mathbb T)\times \CC^2 \to H^1(\mathbb T)\times \CC^2.
\end{equation*}
Then $\mathscr E(\nu,\tau)$ is holomorphic near $(0,\sqrt{k})$ and $\mathscr E(0,\tau)=\mathscr E(\tau)$.
Recall that the invertibility of $P(\nu,\tau): H^1(\mathbb T)\to L^2(\mathbb T)$ is equivalent to the invertibility of $E_{-+}(\nu,\tau)$. In fact, there holds the following Schur complement formula
\[ P^{-1}=E-E_+ E_{-+}^{-1}E_-, \ E_{-+}^{-1}=-R_+P^{-1}R_-. \]

Since $E_{-+}(\nu,\tau)$ is a $2\times 2$ matrix, its invertibility is much easier to characterize. 
Let $\mathcal L(\nu,\tau):=\det E_{-+}(\nu,\tau)$, then we know $\mathcal L$ is holomorphic near $(0,\sqrt{k})$ and $\tau\in \mathscr R(\nu)$ if and only if $\mathcal L(\nu,\tau)=0$. Since $E_{-+}(0,\tau)=(k-\tau^2)I_1$, we know $\mathcal L(0,\tau)=(k-\tau^2)^2$. By the Weierstrass Preparation Theorem, we know that for $(\nu,\tau)$ in an open neighborhood $U_k$ of $(0,\sqrt{k})\in \CC^2$, there exist holomorphic functions $g_0(\nu)$, $g_1(\nu)$ and $\mathcal N(\nu,\tau)$ such that 
\begin{equation*}\begin{gathered}
    \mathcal L(\nu,\tau)=\mathcal N(\nu,\tau)( (\tau-\sqrt{k})^2+g_0(\nu) (\tau-\sqrt{k}) +g_1(\nu) ), \\
    g_0(0)=g_1(0)=0, \ \mathcal N(\nu,\tau)\neq 0, \ (\nu,\tau)\in U_k.
\end{gathered}\end{equation*}
Thus in $U_k$, the zeros $(\nu,\tau(\nu))$ of $\mathcal L(\nu,\tau)$ are
\begin{equation*}
    \tau_{k,\pm}(\nu)=\sqrt{k}-\frac{g_0(\nu)\pm \sqrt{g_0(\nu)^2-4g_1(\nu)}}{2}.
\end{equation*}
Since $g_0$, $g_1$ are holomorphic functions, we have the Taylor expansion 
\begin{equation}
\label{eq: taylor}
g_0(\nu)^2-4g_1(\nu) = \nu^{r}(a_0+a_1 \nu+\cdots), \ r\in \mathbb Z, \ r>0, \ a_0\neq 0. 
\end{equation}
Thus one of the following statement is true: 
\begin{enumerate}[(i)]
    \item[1.] Either both $\tau_{k,\pm}$ are holomorphic functions of $\nu$ for $\nu$ near $0\in \CC$, that is,
    \begin{equation} 
    \label{eq: ser1}
    \tau_{k,\pm}(\nu) = \sqrt{k}+\sum_{\ell=1}^{\infty} a_{\ell,\pm} \nu^{\ell}, \ a_{\ell,\pm}\in\CC;
    \end{equation}

    \item[2.] Or both $\tau_{k,\pm}$ have power series expansion in terms of $\sqrt{\nu}$ when $\nu$ is near $0$ and $\nu\neq 0$: 
    \begin{equation} 
    \label{eq: ser2}
    \tau_{k,\pm}(\nu) = \sqrt{k} + \sum_{\ell=1}^{\infty} b_{\ell} (\pm\sqrt{\nu})^{\ell}, \ b_{\ell}\in \CC. 
    \end{equation}
\end{enumerate}

On the other hand, we have the following expansion of $E_{-+}$ (see for instance \cite[Lemma C.3]{dyatlov2019mathematical}):
\begin{equation}\label{eq: emp}
    \begin{gathered}
    E_{-+}(\nu,\tau) = (k-\tau^2)I_2 - \sum_{\ell=1}^{\infty}(i\nu \tau)^{\ell}  T_{\ell}(\tau), \\
    T_{\ell}(\tau):=\begin{pmatrix} 
        \langle \chi(M_k(\tau)\chi)^{\ell-1} e^{ikx}, e^{ikx} \rangle 
        & \langle \chi (M_k(\tau)\chi )^{\ell-1} e^{ikx}, e^{-ikx} \rangle \\ 
        \langle \chi (M_k(\tau)\chi )^{\ell-1} e^{-ikx}, e^{ikx} \rangle 
        & \langle \chi (M_k(\tau)\chi)^{\ell-1} e^{-ikx}, e^{-ikx} \rangle
    \end{pmatrix}.
    \end{gathered}
\end{equation}
In particular, we know $\nu^{-1}E_{-+}(\nu,\sqrt{k})$ is holomorphic near $\nu=0$. This indicates that $\nu^{-2}\mathcal L(\nu,\sqrt{k}) = \nu^{-2}\mathcal N(\nu,\sqrt{k}) g_1(\nu) $ is holomorphic. Therefore $\nu^{-2}g_1(\nu)$ is holomorphic near $\nu=0$ and in the expansion \eqref{eq: taylor}, we must have $r\geq 2$. As a result, in either \eqref{eq: ser1} or \eqref{eq: ser2}, we must have 
\begin{equation}
\label{eq: tpm}
    \tau_{k,\pm}(\nu) = \sqrt{k}+ {c_{\pm}} \nu + o(\nu). 
\end{equation}
Inserting \eqref{eq: tpm} in \eqref{eq: emp} gives
\begin{equation*}
    E_{-+}(\nu,\tau_{k,\pm})=(-2\sqrt{k} c_{\pm} \nu +o(\nu))I_2 - (i\sqrt{k}\nu +o(\nu))
    \begin{pmatrix} \widehat{\chi}(0) & \overline{\widehat{\chi}(2k)} \\ \widehat{\chi}(2k) & \widehat{\chi}(0) \end{pmatrix} + S(\nu), 
\end{equation*}
where $S(\nu)$ is a $2\times 2$ matrix and each entry of $S(\nu)$ is of $o(\nu)$. Use the fact that $\mathcal L(\nu, \tau_{k,\pm}(\nu))=0$ and we find 
\begin{equation*}
    \det\left( 2c_{\pm}  I_2 + i \begin{pmatrix} \widehat{\chi}(0) & \overline{\widehat{\chi}(2k)} \\ \widehat{\chi}(2k) & \widehat{\chi}(0) \end{pmatrix} \right)=0,
\end{equation*}
from which we solve 
\begin{equation*}
    c_{\pm}=-\frac{\widehat{\chi}(0)\pm |\widehat{\chi}(2k)|}{2}i.
\end{equation*}
This completes the proof.
\end{proof}

\Remarks 
1. The expansion of $\tau_{k,\pm}$ as in \eqref{eq: ser1}, \eqref{eq: ser2} are special cases of Puiseux series expansion. See \cite[\S 2.4]{wang2022zero} and the references there for a brief introduction to Puiseux series and their applications to spectral problems. 

\noindent 
2. By inserting \eqref{eq: tpm} in \eqref{eq: emp}, one can actually obtain successively the coefficient for any order terms in the expansions \eqref{eq: ser1} or \eqref{eq: ser2}.

\section{Numerical Simulations}
\label{sec:numsims}

We run our numerical approximations on three models of damping to highlight the theoretical results above.  The three damping functions we consider are:
\begin{align*}
\chi_1 (x) & = \frac14 \cos^2(x) ,  \quad \ \chi_2 (x) = e^{-2 (x-\pi)^2} , \label{chi:gauss} \\
\chi_3 (x) & = \frac18 \left[ \tanh \left( 20 \left( x - \pi + \frac14 \right) \right) - \tanh \left( 20 \left( x - \pi - \frac14 \right) \right) \right] ,
\end{align*}
where the function $\chi_1$ is a low-frequency damping, $\chi_2$ is a Gaussian damping with exponential decay at high frequency, and $\chi_3$ is a compactly supported damping function with slowly decaying Fourier modes.  

The equations to \eqref{eqn:dampwave} are rather straightforward to discretize as ODE systems in Fourier space, all the time dependent solvers are run by using Fourier pseudospectral methods in space with $2^9$ spatial grid points and integrating in time using the \texttt{ode45} package in \texttt{MATLAB} with relative and absolute tolerance levels set at $10^{-9}$ to ensure high accuracy of the solutions. 

\subsection{Demonstration of Polynomial Decay rates}

We can observe that decay rates for the damped fractional wave dynamics appear to be numerically very close to the polynomial rates predicted by our estimates for well-chosen initial data.  Indeed, in Figure \ref{fig:t2}, we observe relative decay properties comparing to the polynomial rate $t^{-2}$ by plotting the evolution of $t^2 E(t)$ for \eqref{eqn:dampwave} using initial conditions that are high frequency and localized far from the damping.  In particular, we take
\[
U(0,x) = \chi_3(x+\pi) \cos(10x)
\]
and observe that the decay rates are quite remarkably close to our predicted polynomial rates.

  To highlight this polynomial behavior, in Figure \ref{fig:t4} we plot $\log E/ \log t$ for both $\chi_1$ damping and $\chi_3$ damping with the same initial condition as above.  For the low frequency damping, $\chi_1$, we observe similar decay rates to those predicted in Theorem \ref{thm: decay}, namely close to $t^{-3}$.

\begin{figure}[htbp] 
   \centering
   \includegraphics[width=0.4\textwidth]{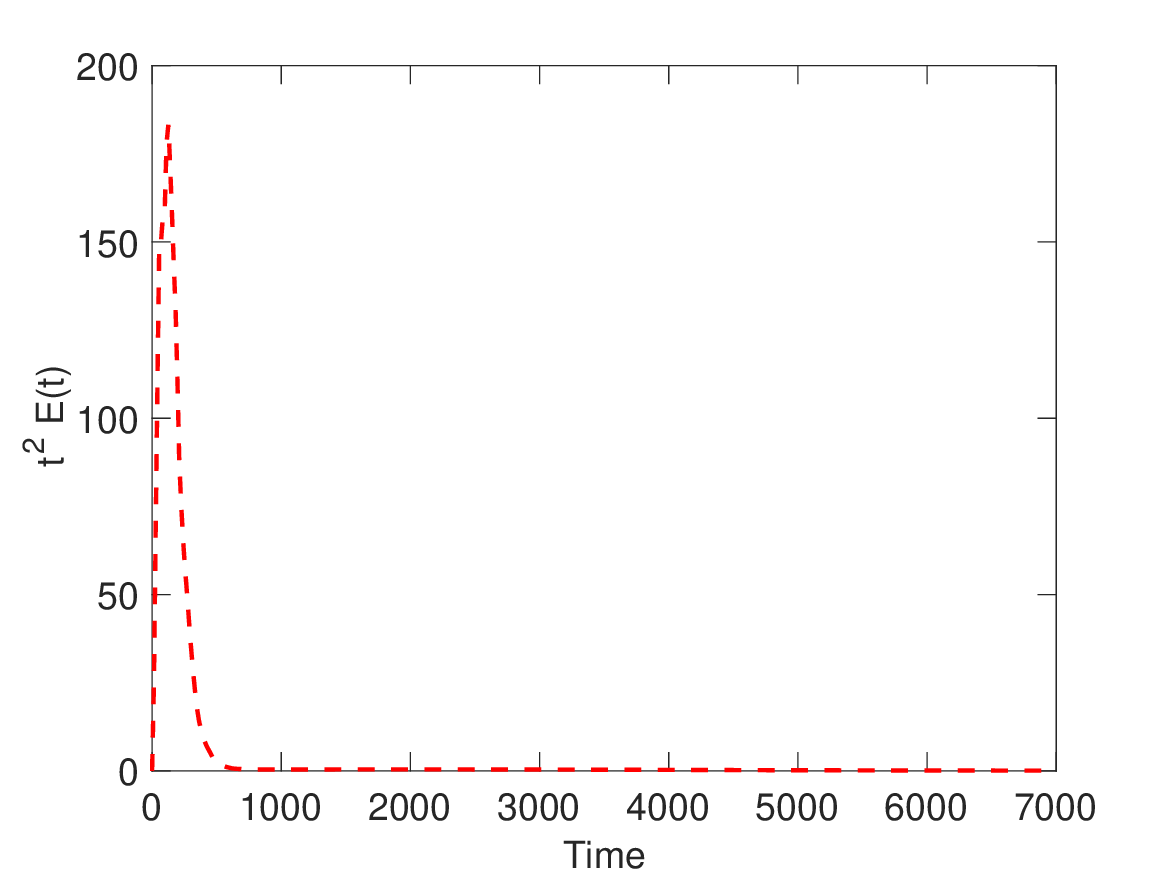} 
     \includegraphics[width=0.4\textwidth]{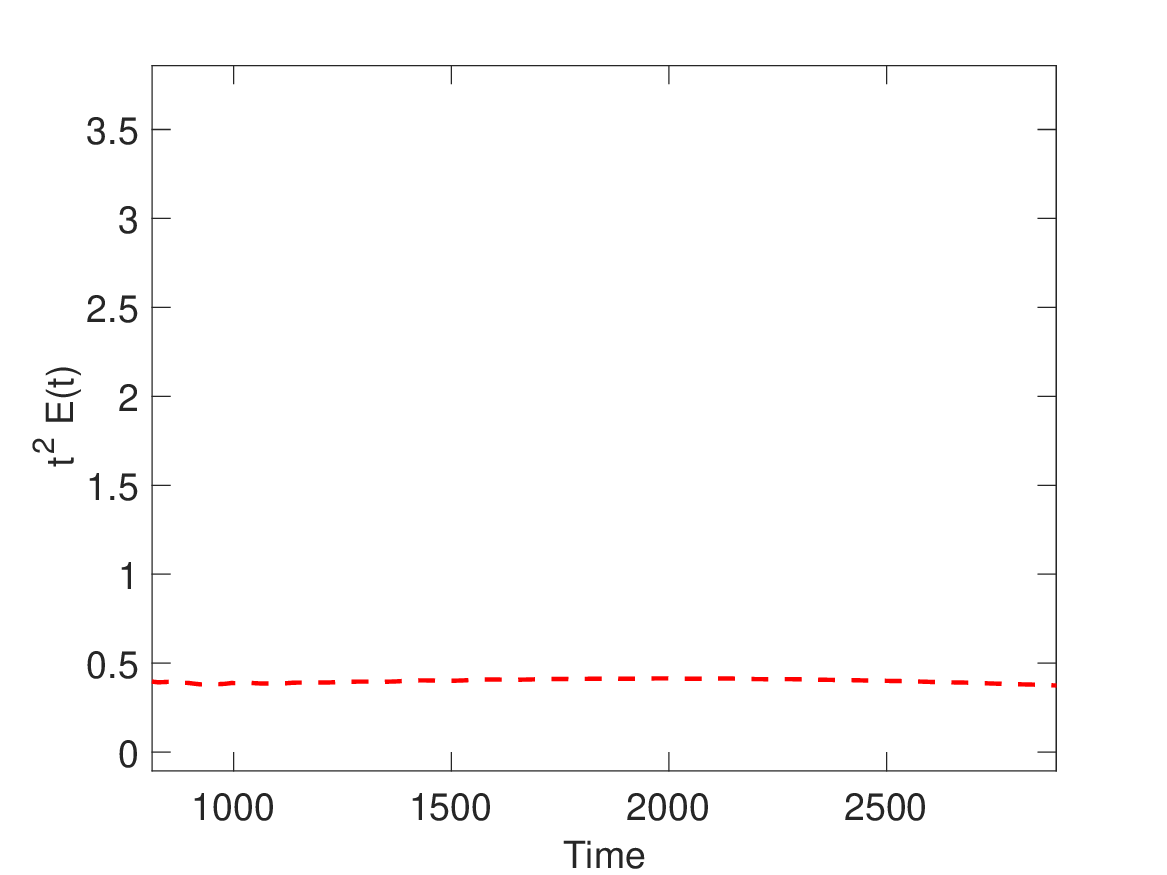} 
   \caption{\textbf{(Left)} A plot of $t^2 E(t)$ in the setting of $\chi_3$ damping and \textbf{(Right)} a zoom in near the large $t$ asymptotics for $U(0,x) = \chi_3(x+\pi) \cos(10x)$.}
   \label{fig:t2}
\end{figure}

\begin{figure}[htbp] 
   \centering
         \includegraphics[width=0.4\textwidth]{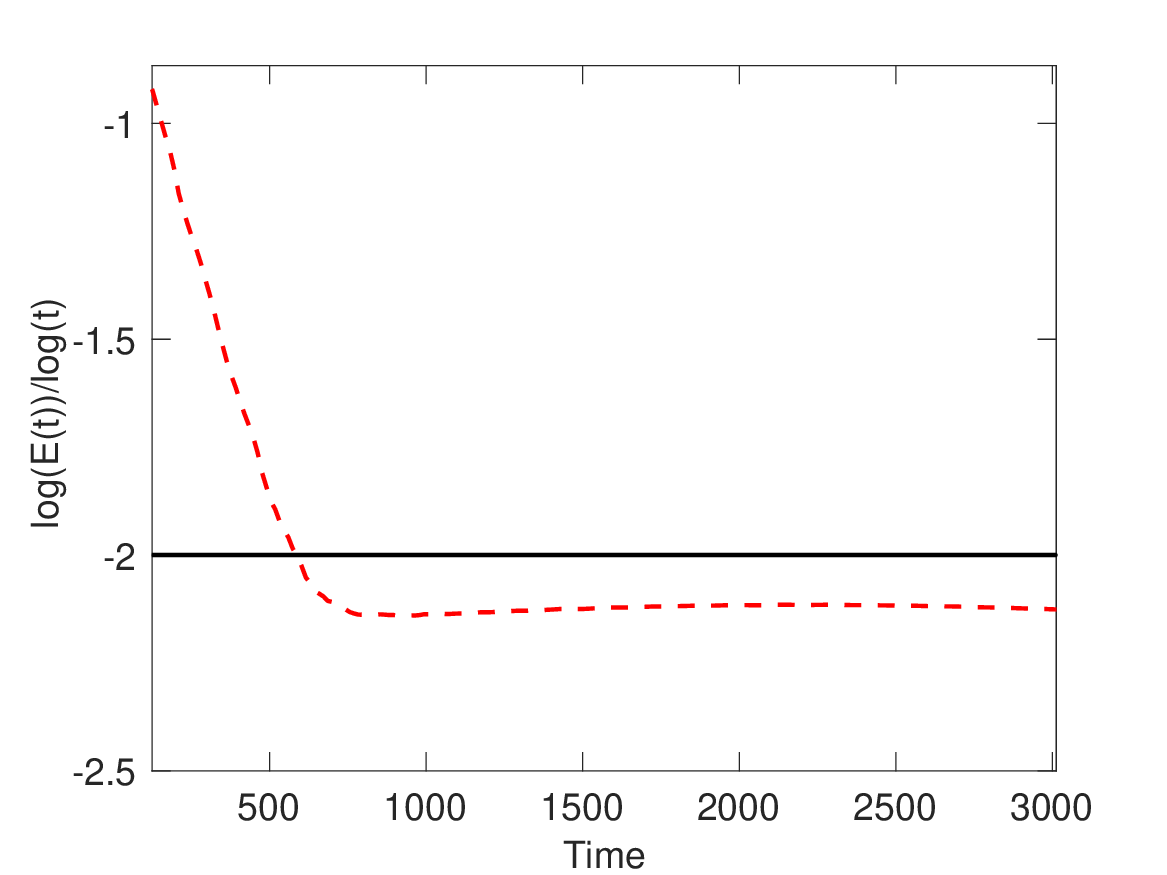} 
   \includegraphics[width=0.4\textwidth]{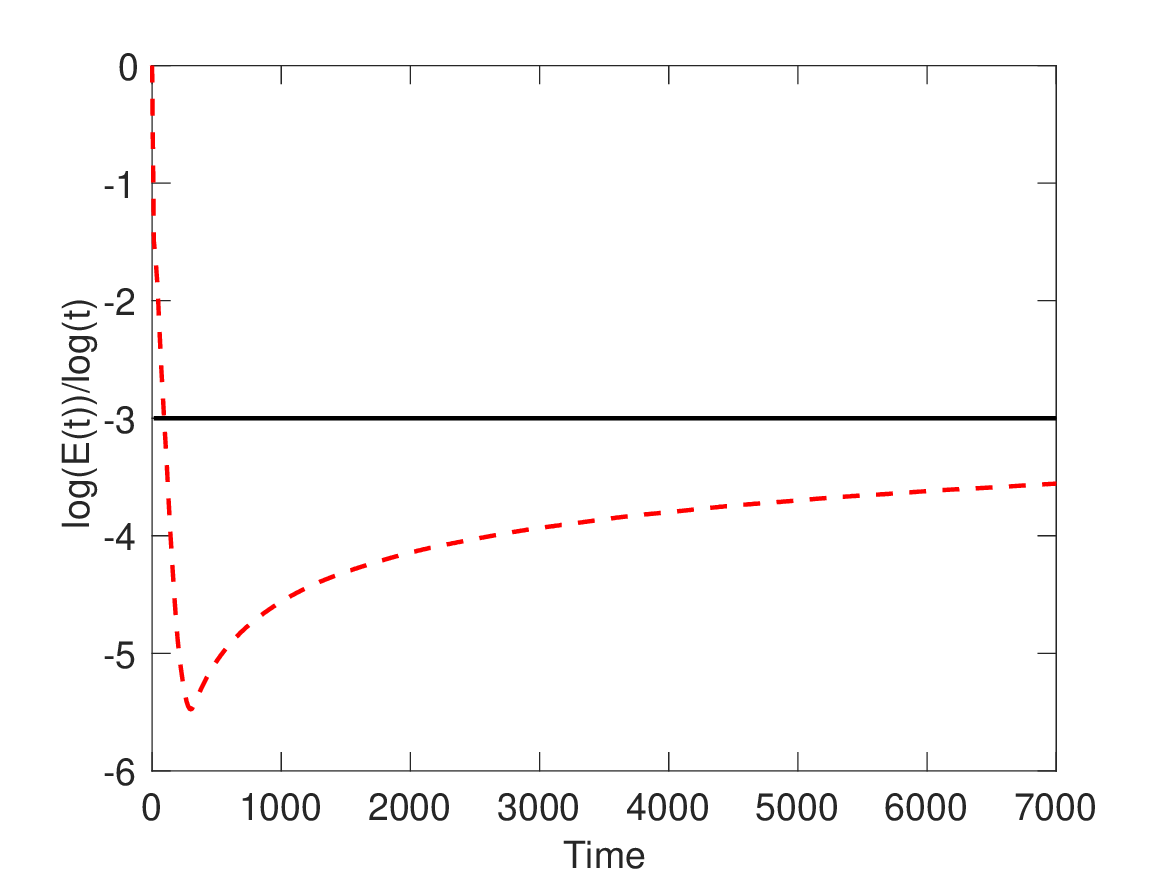} 
   \caption{\textbf{(Left)} The numerically observed values of $\log(E(t))/\log(t)$ demonstrating that the rate of decay is very close to $t^{-2}$ for $\chi_3$ damping.  \textbf{(Right)} A plot of $\log(E(t))/\log(t)$ in the setting of $\chi_1$ damping for demonstrating that the rate of decay is approaching $t^{-3}$.  In both, we have taken $U(0,x) = \chi_3(x+\pi) \cos(10x)$.}
   \label{fig:t4}
\end{figure}

\subsection{Approximation of the resonances at low frequency}

To begin, by implementing a low frequency approximation scheme, we can analyze the behavior of the resonances at low energy. For that we define 
\[ L_N:=\mathrm{span}_{\CC}\{ e^{ikx} \ | \ |k|\leq N \}, \ \pi_N: L^2(\mathbb T)\to L_N \text{ is the orthogonal projection. } \]
We introduce the discretization $P_N(\tau)$ of $P(\tau)$ using Fourier modes:
\[ P_N(\tau):= |D|-i\tau \pi_N \chi-\tau^2: L_N\to L_N. \]
Since $L_N$ is a linear space of dimension $2N+1$, we know for each $\tau\in \CC$, $P_N(\tau)$ is a $(2N+1)\times (2N+1)$ matrix. 
Indeed, if we expand $\chi$ using Fourier series
\[ \chi(x) = \sum_{n\in \mathbb Z} \widehat{\chi}(n) e^{inx}, \]
then using the basis $\{ e^{ikx} \}_{|k|\leq N}$ of $L_N$, $P_N(\tau)$ becomes a matrix
\[  P_N (\tau) =\mathrm{diag}(|n|)_{|n|\leq N} -(\tau^2+i\widehat{\chi}(0)\tau)I_{2N+1}-i\tau \sum_{n=1}^{2N}\left( \widehat{\chi}(n) J_{2N+1}^n + \overline{\widehat{\chi}(n)}(J_{2N+1}^t)^n \right), \]
where $J_{2N+1}$ is the $(2N+1)\times (2N+1)$ upper-triangular Jordan block.

Because $P(\tau)$ is a quadratic polynomial of $\tau$ with matrix coefficients, we know $\det(P_N(\tau))$ is polynomial of $\tau$ of order $2(2N+1)$. Therefore $\det(P_N(\tau))^{-1}$ is meromorphic function with $2(2N+1)$ poles (counting multiplicities). As a result, the inverse matrix
\[ P_N(\tau)^{-1}: L_N\to L_N, \ \tau\in \CC \]
is a meromorphic family of $(2N+1)\times (2N+1)$ matrices with $2(2N+1)$ poles (counting multiplicities). We denote the set of poles of $P_N(\tau)$, i.e., zeros of $\det(P_N(\tau))$, by $\mathscr R_N$. We regard $\mathscr R_N$ as a low frequency approximation to $\mathscr R$.

For a given damping function $\chi$, we can thus numerically construct function ${\rm det}P_N (\tau)$ whose form we can compute symbolically in \texttt{MATLAB}.  We can then find the roots of this polynomial equation in $\tau$.

We uniformly take $N=12$ as an approximation, and observe the following each of our three experiments.  Depending upon the potential, we have a number of resonances concentrating around $\mathrm{Im} (z) = \widehat{\chi}_j (0)$ for $j = 1,2,3$ as predicted by the asymptotics in \S \ref{sec:resdist}.  The observed resonances are computed using $N=12$.  The resonance with smallest imaginary part is $\mathrm{Im} (z) = -.0312,-.0378,-4.48 \times 10^{-4}$ for $\chi_j$ with $j=1,2,3$ respectively.  We observed that these resonances were stable under refinement of the approximation parameter $N$.  Of course, there is $0$ resonance for every example corresponding to the constant solution. 

We can compare our asymptotics from \S \ref{sec:resdist} to the computed approximate resonances with $N=12$ for a variety of damping functions and parameters $\nu$.  We consider again three experiments again motivated by low frequency damping ($\chi_1$), Gaussian damping ($\chi_2$) and compactly supported damping ($\chi_3$), but with varying amplitude depending upon the parameter $\nu$:
\begin{align*}
\chi_{1,\nu} (x) & = \nu \cos^2(x) ,  \ \ \chi_{2,\nu} (x) =  \nu e^{-2 (x-\pi)^2} ,  \\
\chi_{3,\nu} (x) & = \frac{\nu}{2} \left[ \tanh \left( 20 \left( x - \pi + \frac14 \right) \right) - \tanh \left( 20 \left( x - \pi - \frac14 \right) \right) \right] .
\end{align*}
In particular, our experiments above corresponded to $\nu = .25, 1, .25$ for $j=1,2,3$ respectively.  Figures \ref{fig:res4}-\ref{fig:res6} demonstrate quite well that our asymptotics remain quite robust for each of these potentials for small $\nu$ and still give a fair amount of insight especially into the real part of the resonances even for large $\nu$.  These resonances give insight into how states that are low-frequency and overlapping with the damping function decay in a significant fashion under the evolution of \eqref{eqn:dampwave}.

\begin{figure}[htbp] 
   \centering
      \includegraphics[width=0.3\textwidth]{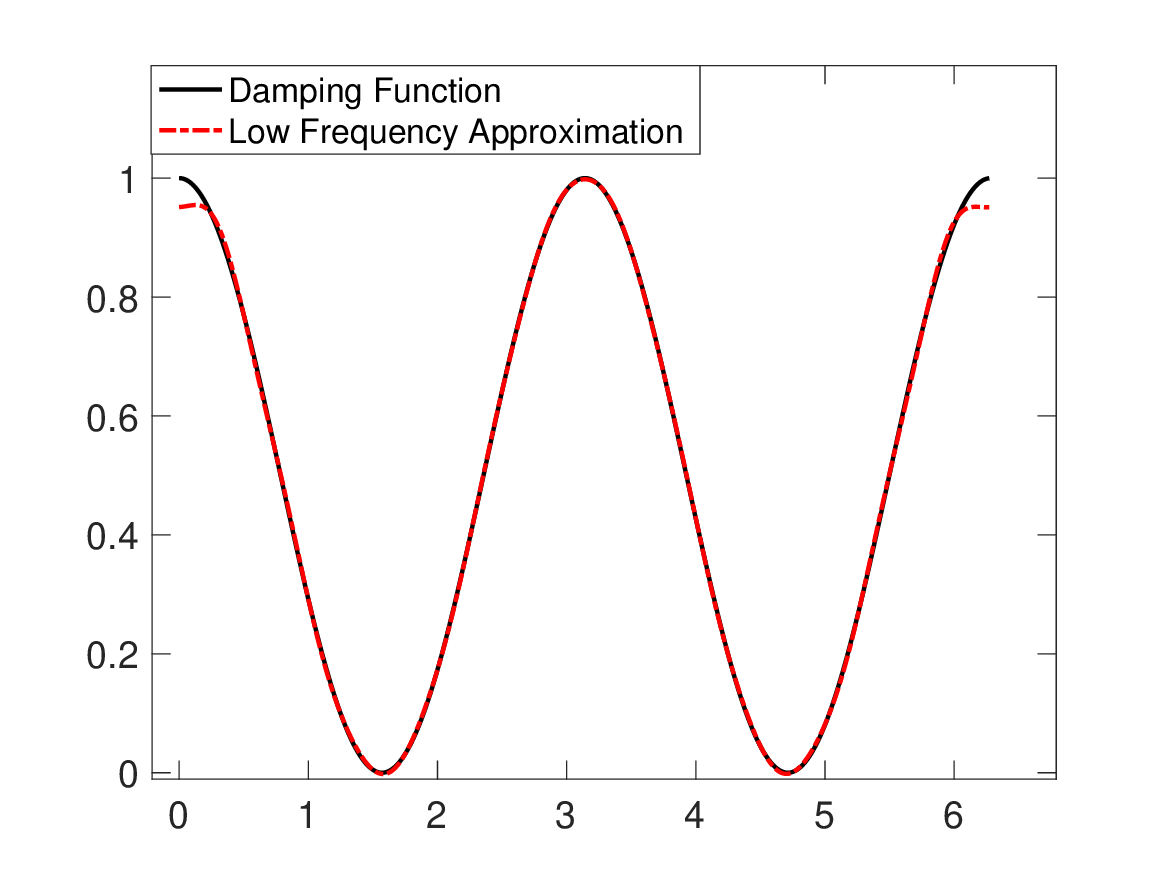} 
   \includegraphics[width=0.3\textwidth]{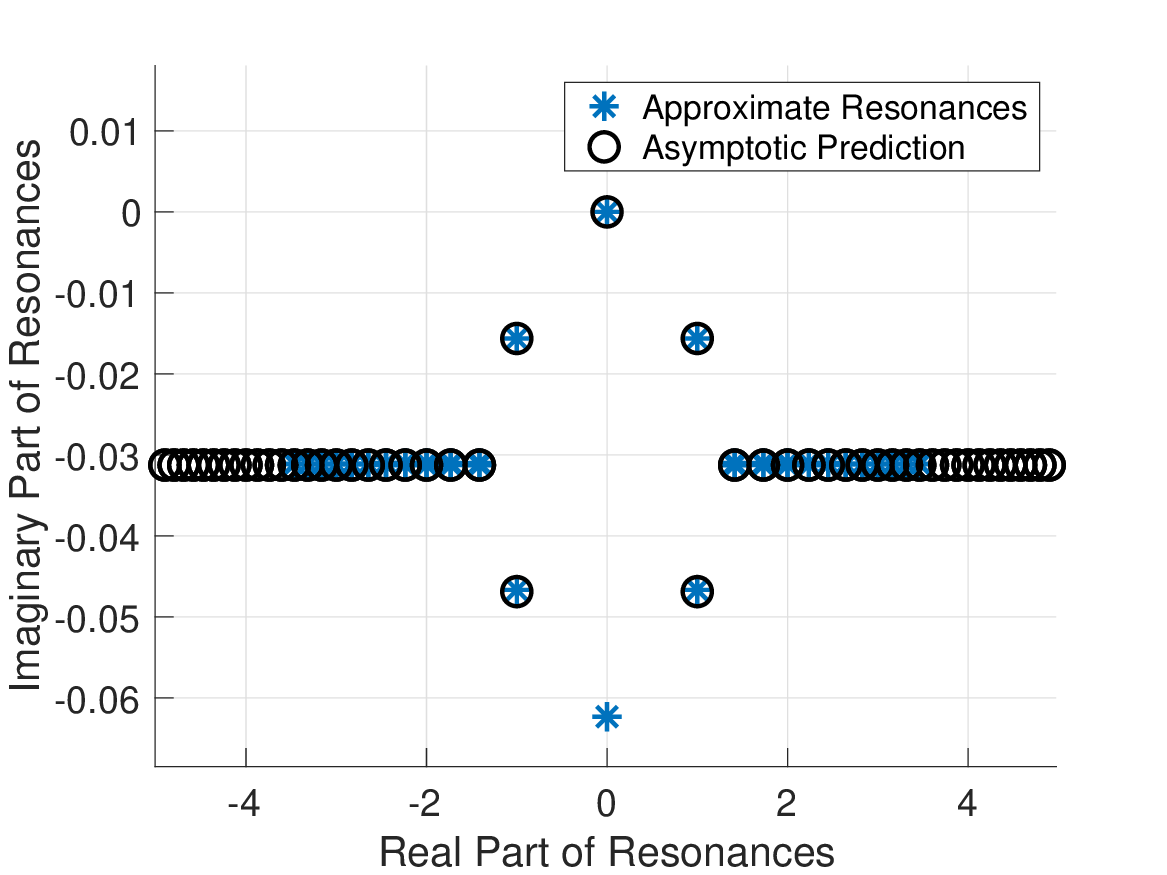} 
   \includegraphics[width=0.3\textwidth]{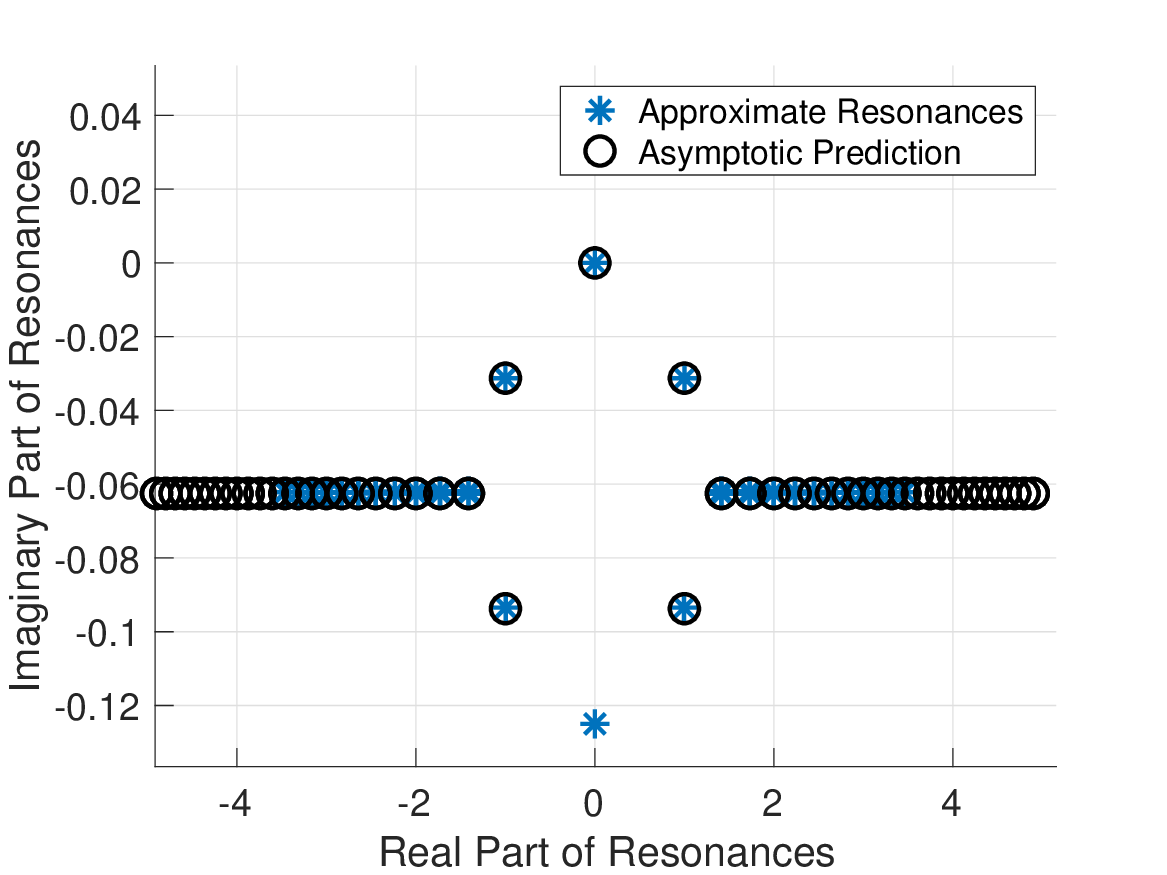} \\
      \includegraphics[width=0.3\textwidth]{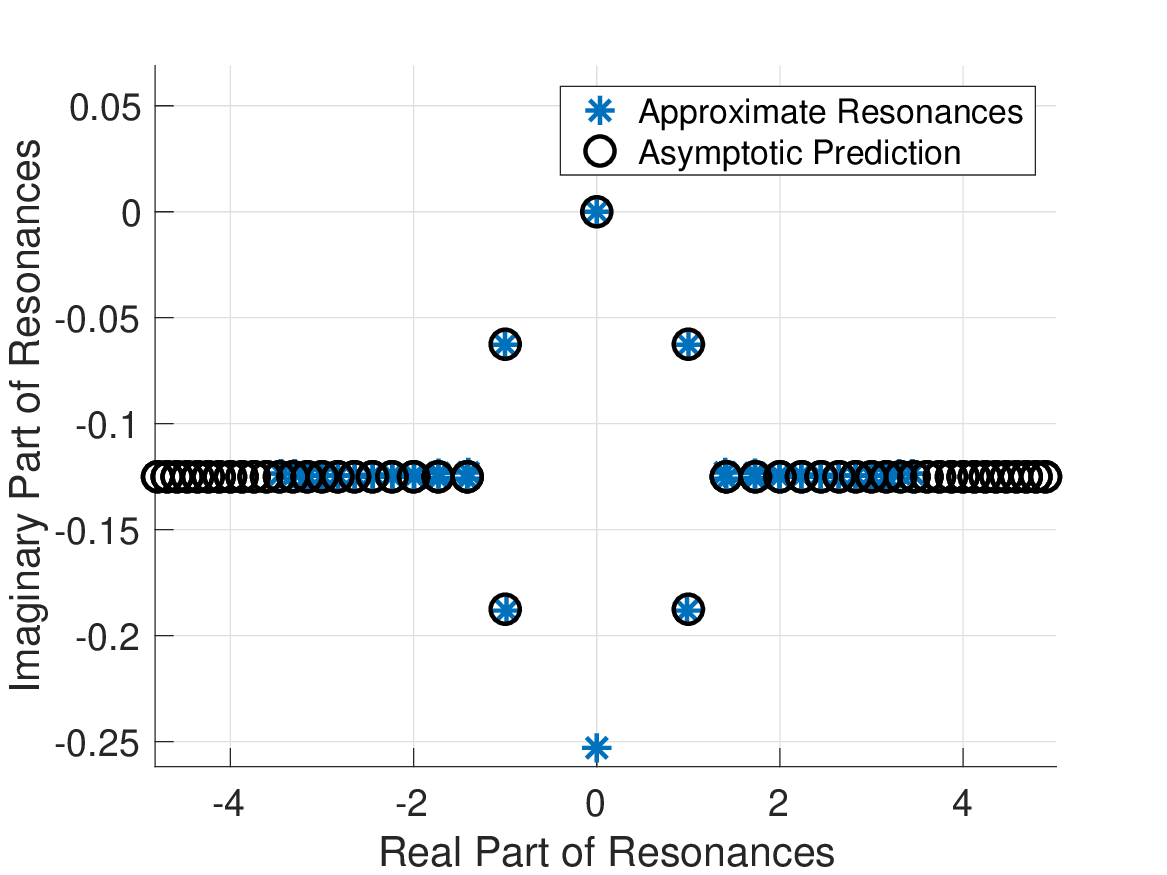} 
   \includegraphics[width=0.3\textwidth]{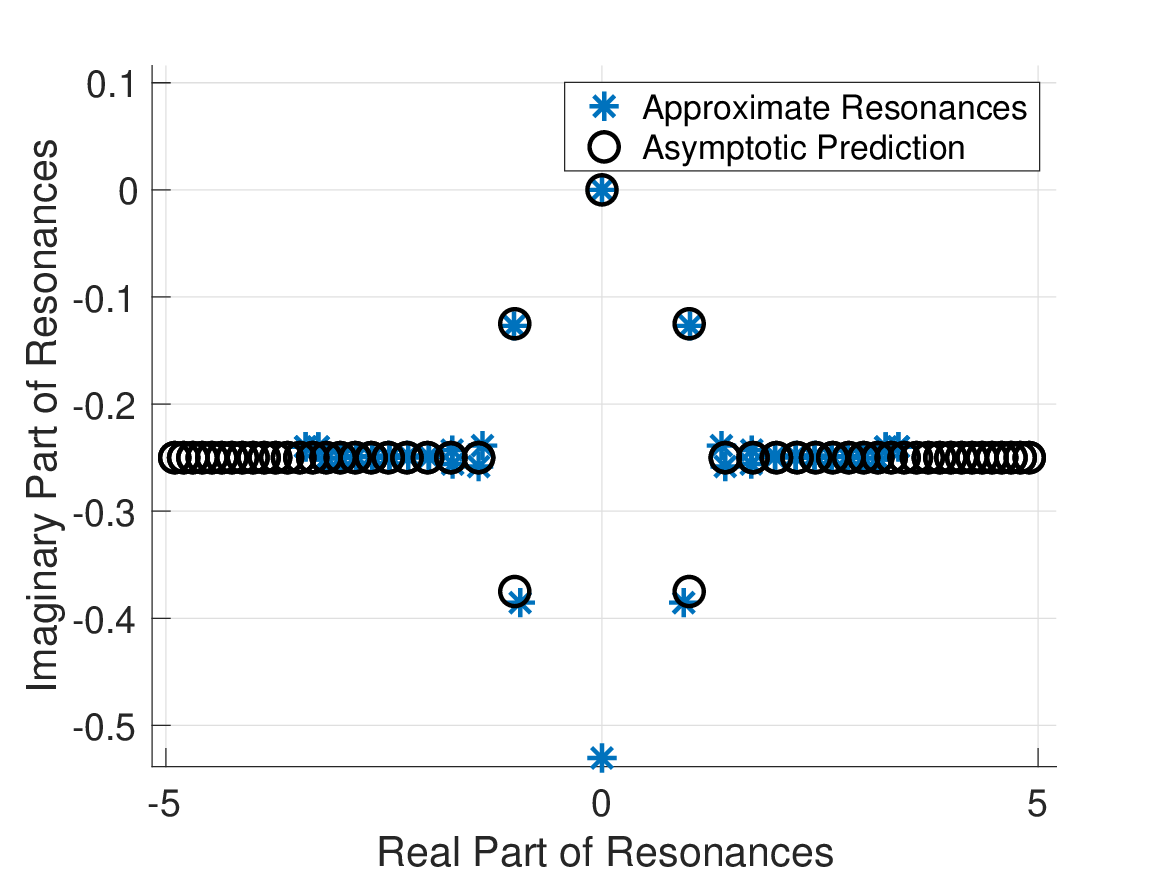}
   \caption{Clockwise from top left, a plot of the low-frequency approximation of the damping function compared to the original followed by the computed approximate resonances compared to prediction generated by $\chi_{1,\nu}$ for $\nu = .125, .25, .5, 1$ respectively.}
   \label{fig:res4}
\end{figure}

\begin{figure}[htbp] 
   \centering
      \includegraphics[width=0.3\textwidth]{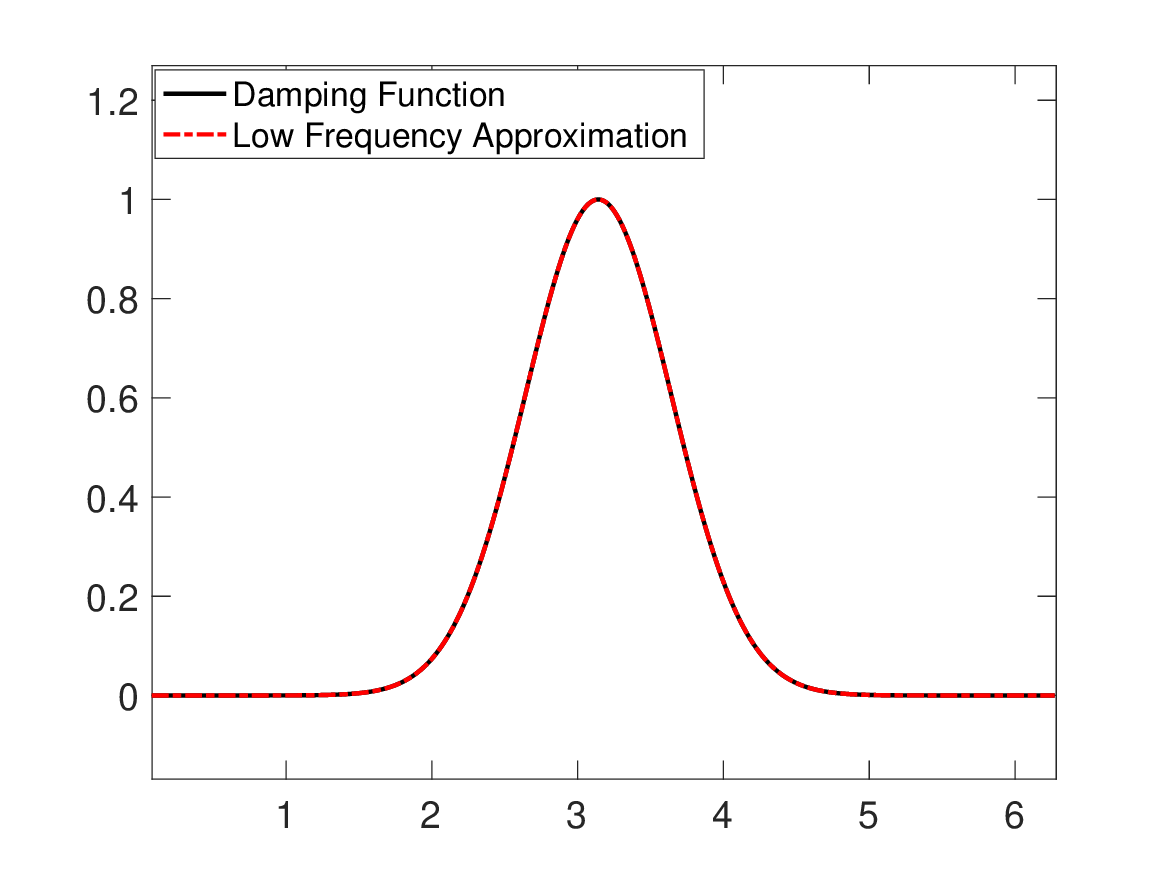} 
   \includegraphics[width=0.3\textwidth]{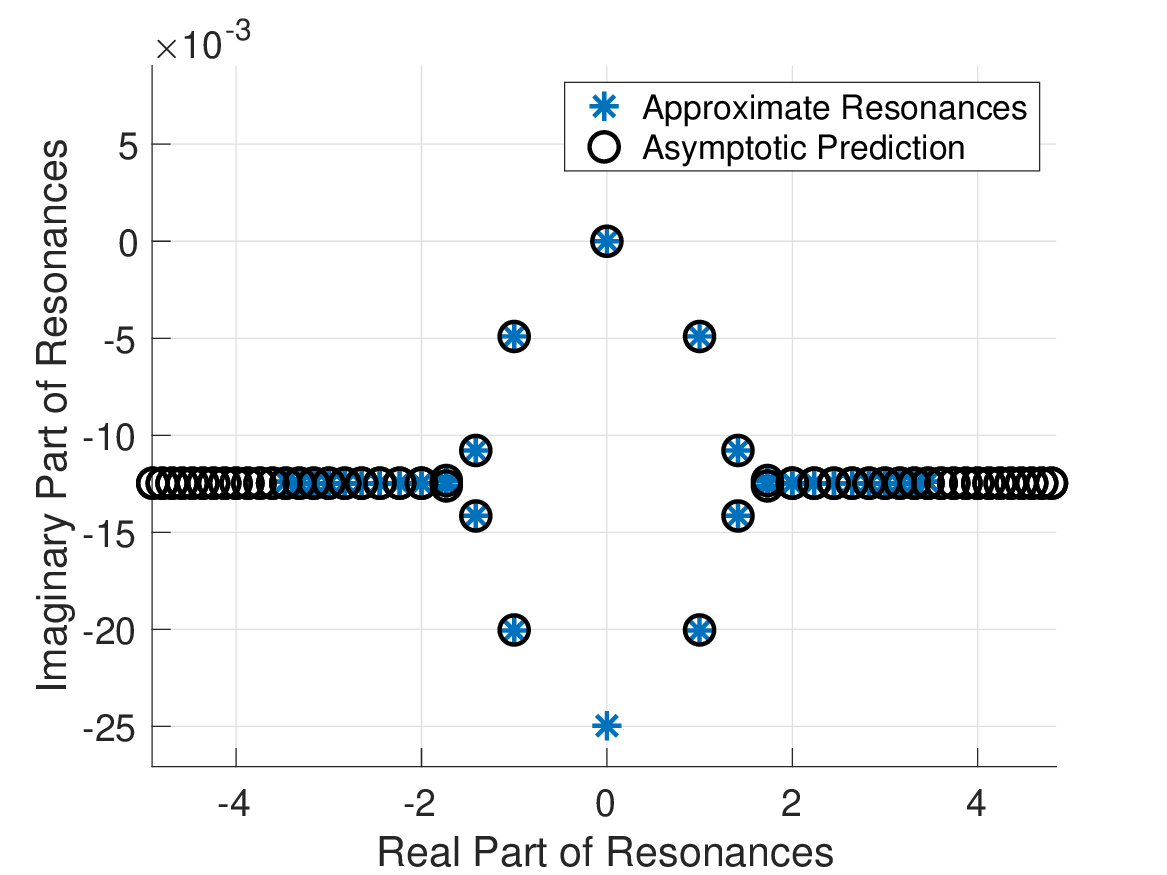} 
   \includegraphics[width=0.3\textwidth]{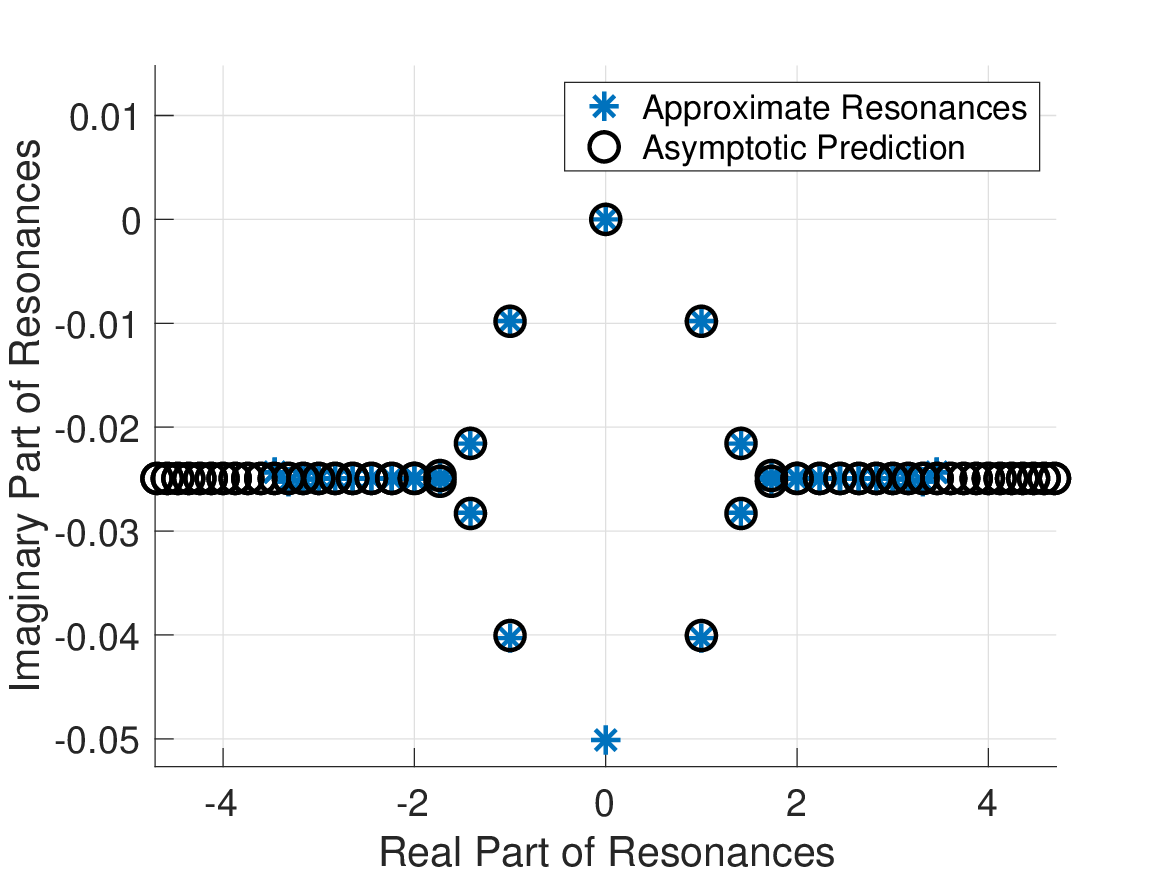}  \\
      \includegraphics[width=0.3\textwidth]{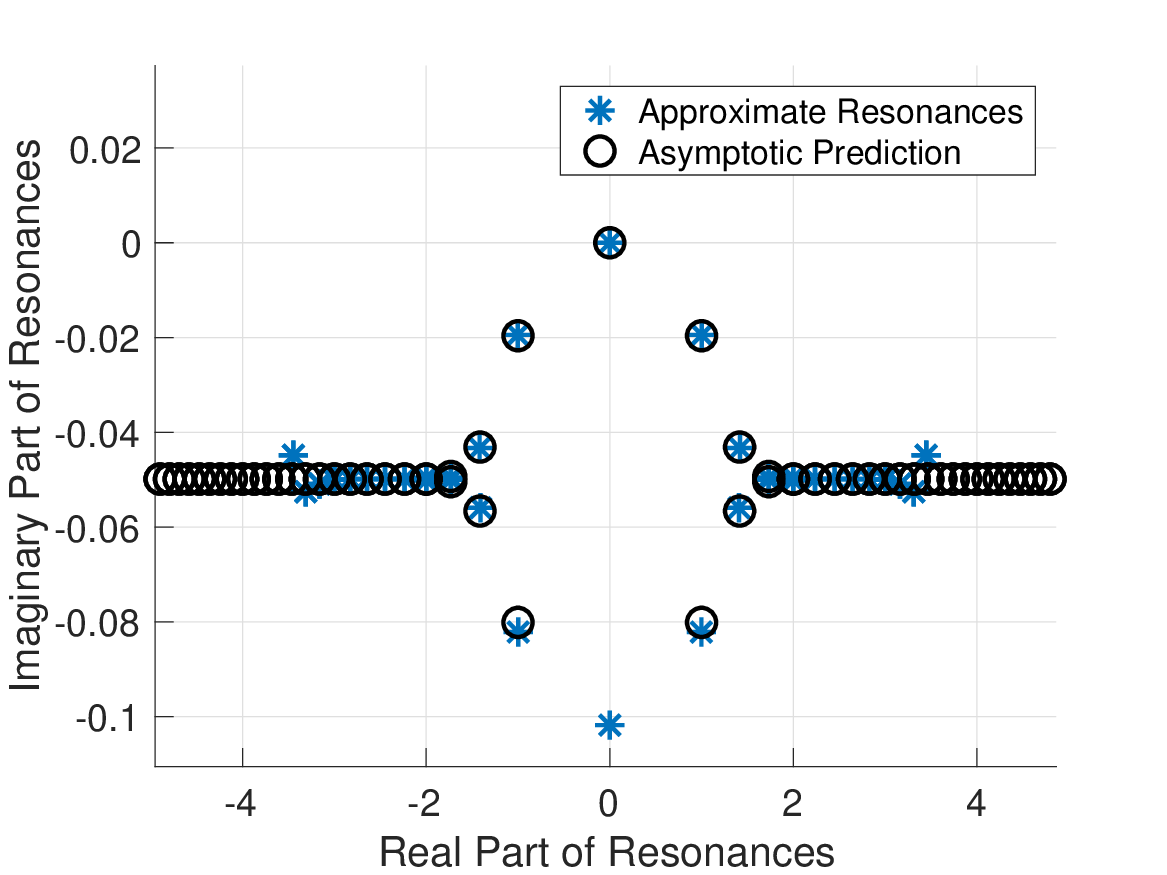} 
   \includegraphics[width=0.3\textwidth]{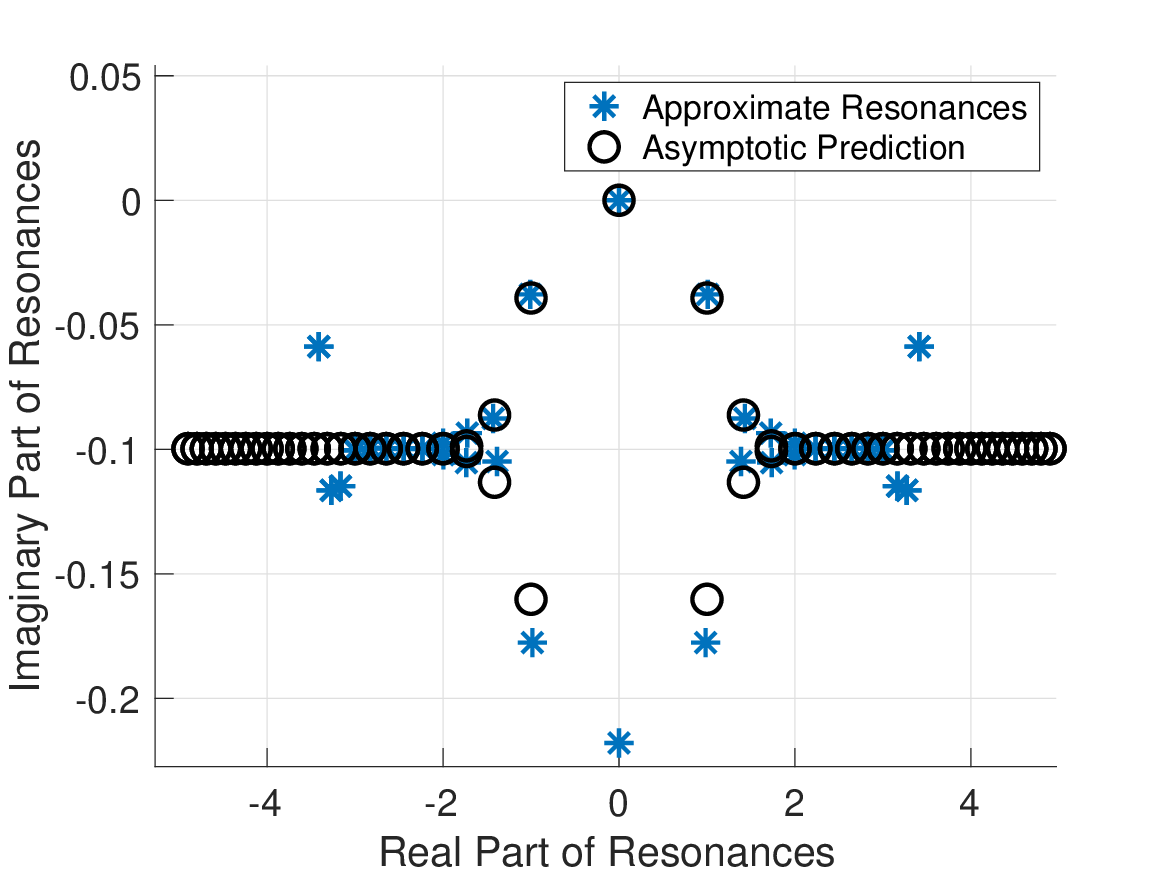}
   \caption{Clockwise from top left, a plot of the low-frequency approximation of the damping function compared to the original followed by the computed approximate resonances compared to prediction generated by $\chi_{2,\nu}$ for $\nu = .125, .25, .5, 1$ respectively.}
   \label{fig:res5}
\end{figure}

\begin{figure}[htbp] 
   \centering
      \includegraphics[width=0.3\textwidth]{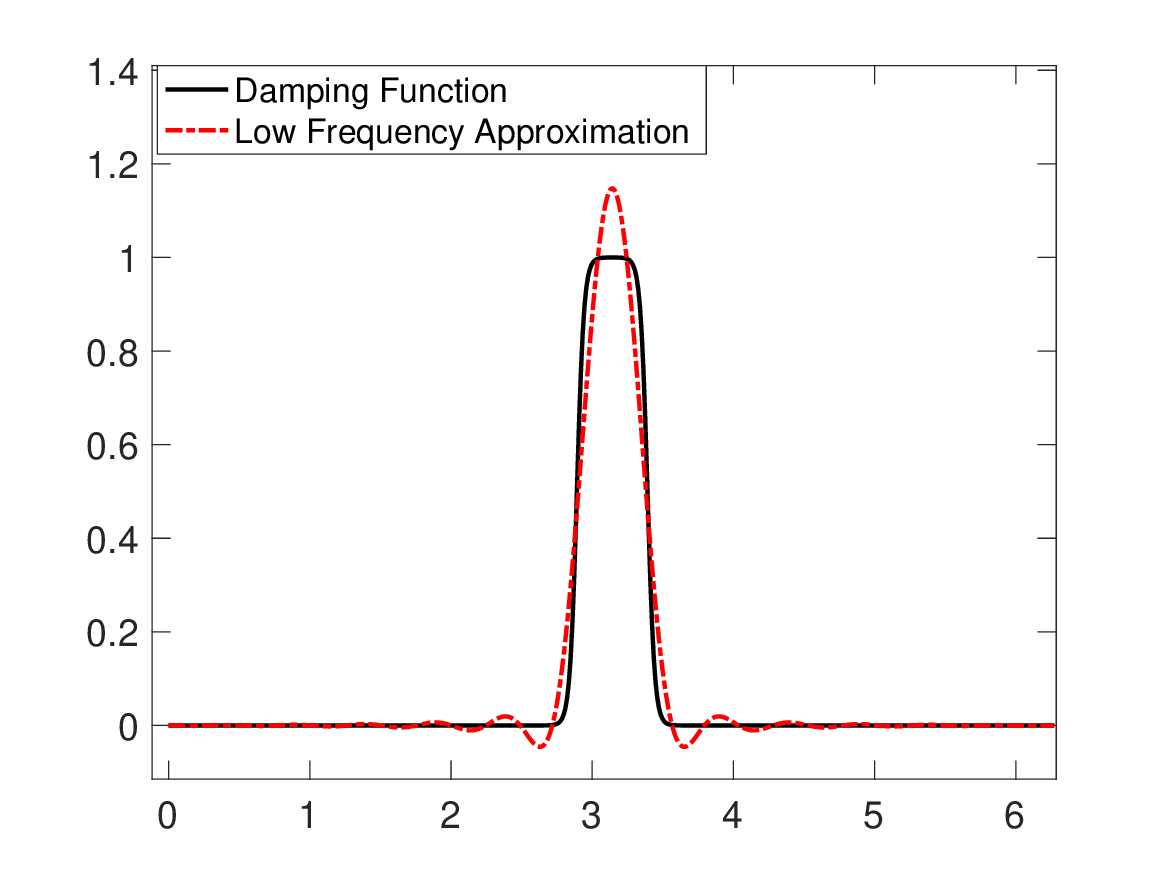} 
   \includegraphics[width=0.3\textwidth]{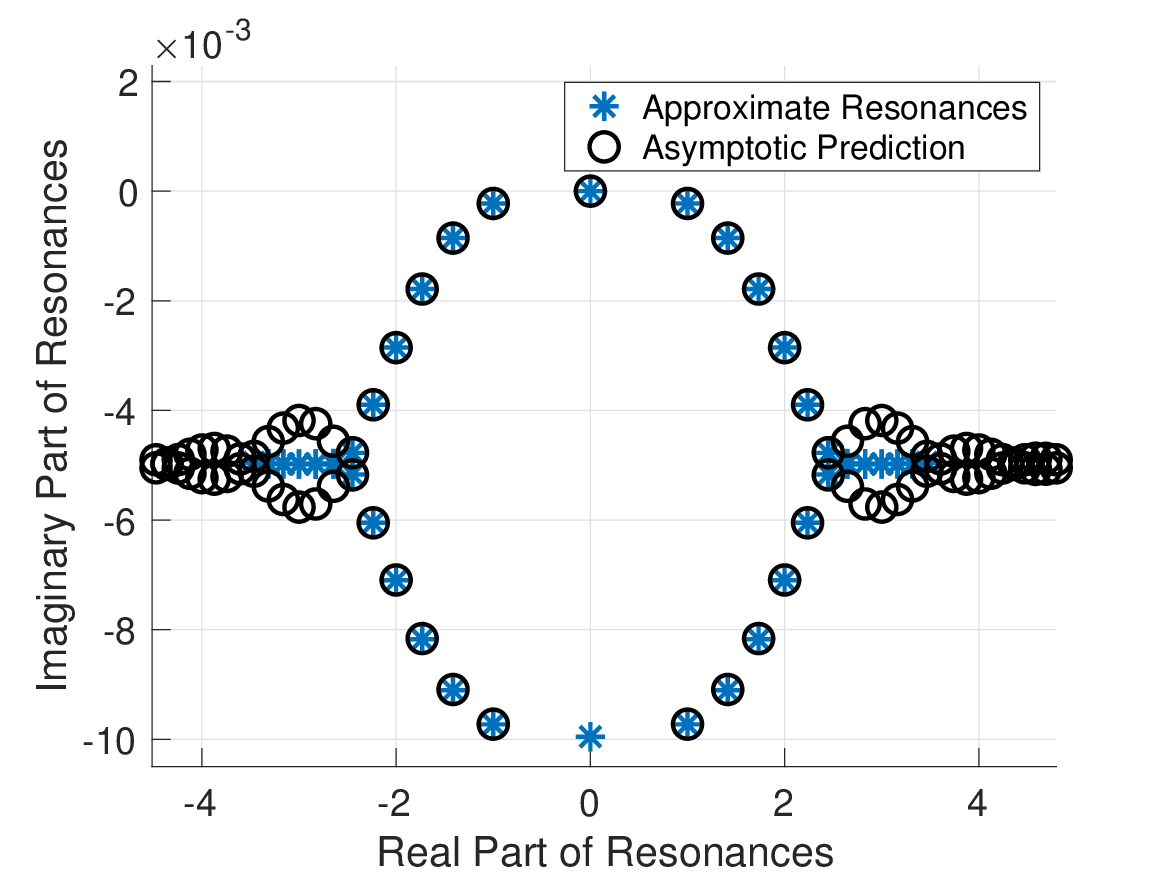} 
   \includegraphics[width=0.3\textwidth]{Approx_vs_Asymptotics_Resonances_Exp3_nu_Pt125.eps}  \\
      \includegraphics[width=0.3\textwidth]{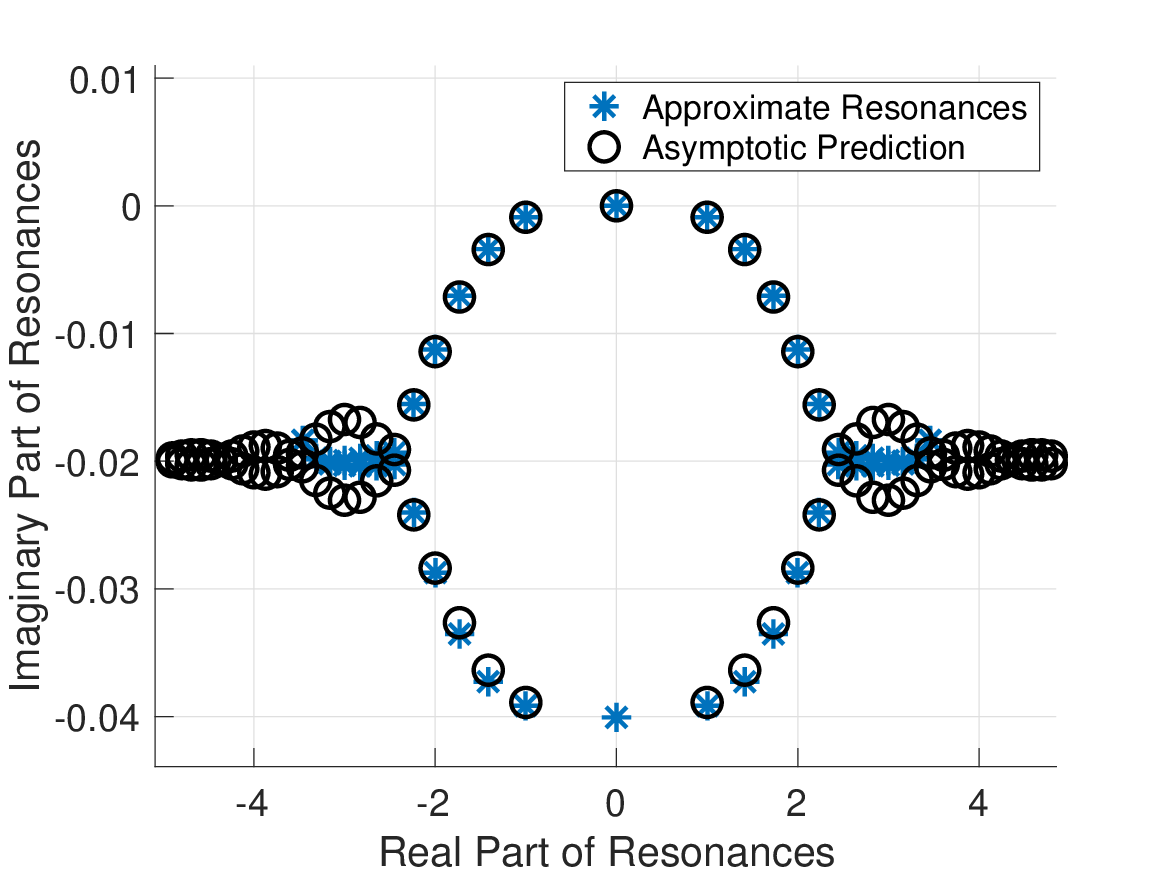} 
   \includegraphics[width=0.3\textwidth]{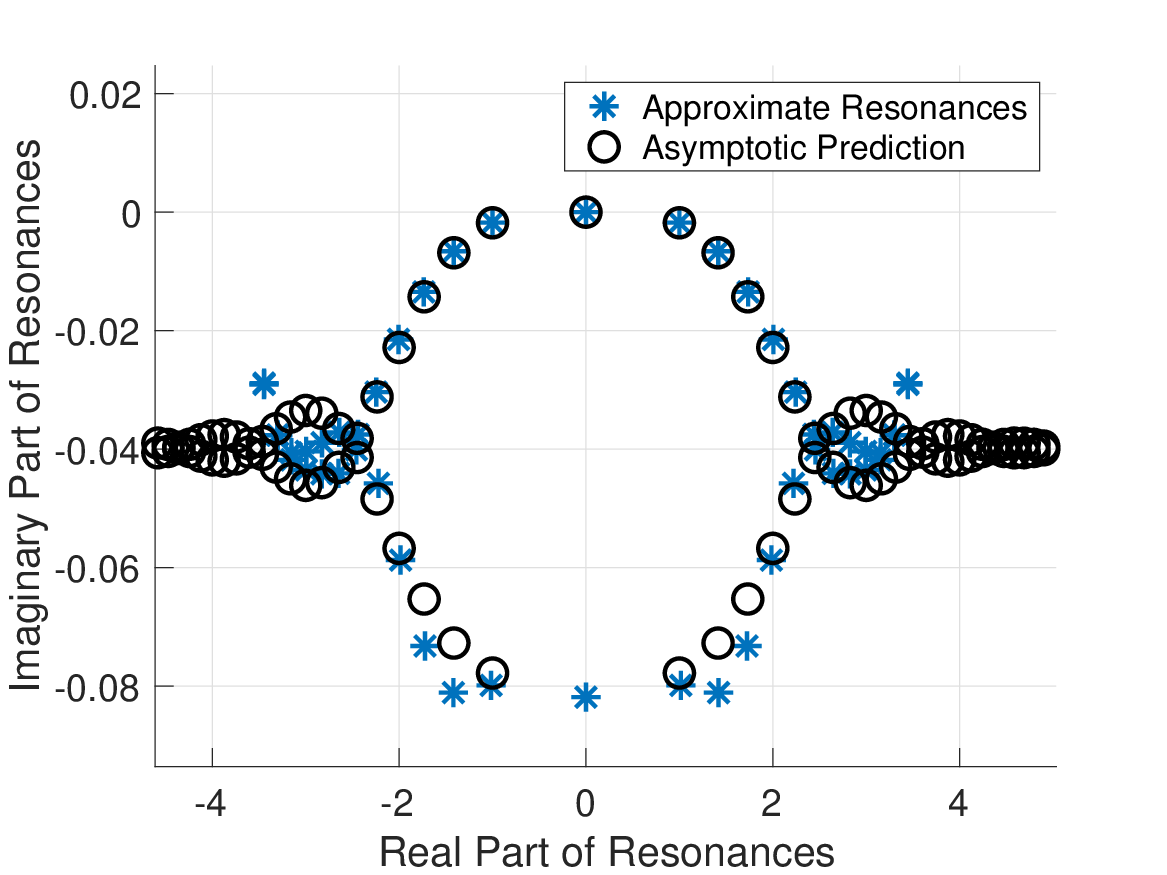}
   \caption{Clockwise from top left, a plot of the low-frequency approximation of the damping function compared to the original followed by the computed approximate resonances compared to prediction generated by $\chi_{3,\nu}$ for $\nu = .125, .25, .5, 1$ respectively.}
   \label{fig:res6}
\end{figure}

\subsection{Improved decay for higher regularity initial data}

Given our approximate resonance values, if we had a resonance free strip otherwise, we should have that $E(t)^{\frac12} \sim e^{\mathrm{Im} (z) t}$ where $E$ is defined as in \eqref{eq: def-energy}.  We have seen that this is not expected given our observed polynomial decay rates above.  However, given highly regular data, we can prove that our decay rate will be better than any polynomial as in the Remark at the end of \S \ref{ResToSG}.  However, we cannot provide such resonance free strips with our existing propagation estimates and based upon our estimates believe that no such strip exists at high energies.  Even so, we can test the evolution of \eqref{eqn:dampwave} for $\chi_{1,2,3}$ and compare the observed rate of convergence for $E^{\frac12}$ to those suggested by our computed resonances with very regular initial data.  These findings are displayed in Figure \ref{fig:res1}, where from left to right we plot the numerically observed decay rates of $E(t)^{\frac12}$ for each $\chi_j$, $j=1,2,3$.  Specifically, we plot $-\log (E(t))/2 t$ vs. $t$ and compare to the exponential decay rate we would expect from the low-energy resonances.  To ensure we observe slow decay rates in particular, in each of these simulations, the initial data is taken to be
\[
U (0,x) = \sin (x),
\]
which by our approximation theory is the function that dominates the low energy resonances we can compute that are closest to the real axis.  As suggested by our results, the decay rate does not fit an explicit asymptotic profile related to a specific resonance, though it does decay strongly and appears to decay at close to the expected exponential rate corresponding to the computed resonances at low energy.

\begin{figure}[htbp] 
   \centering
   \includegraphics[width=0.3\textwidth]{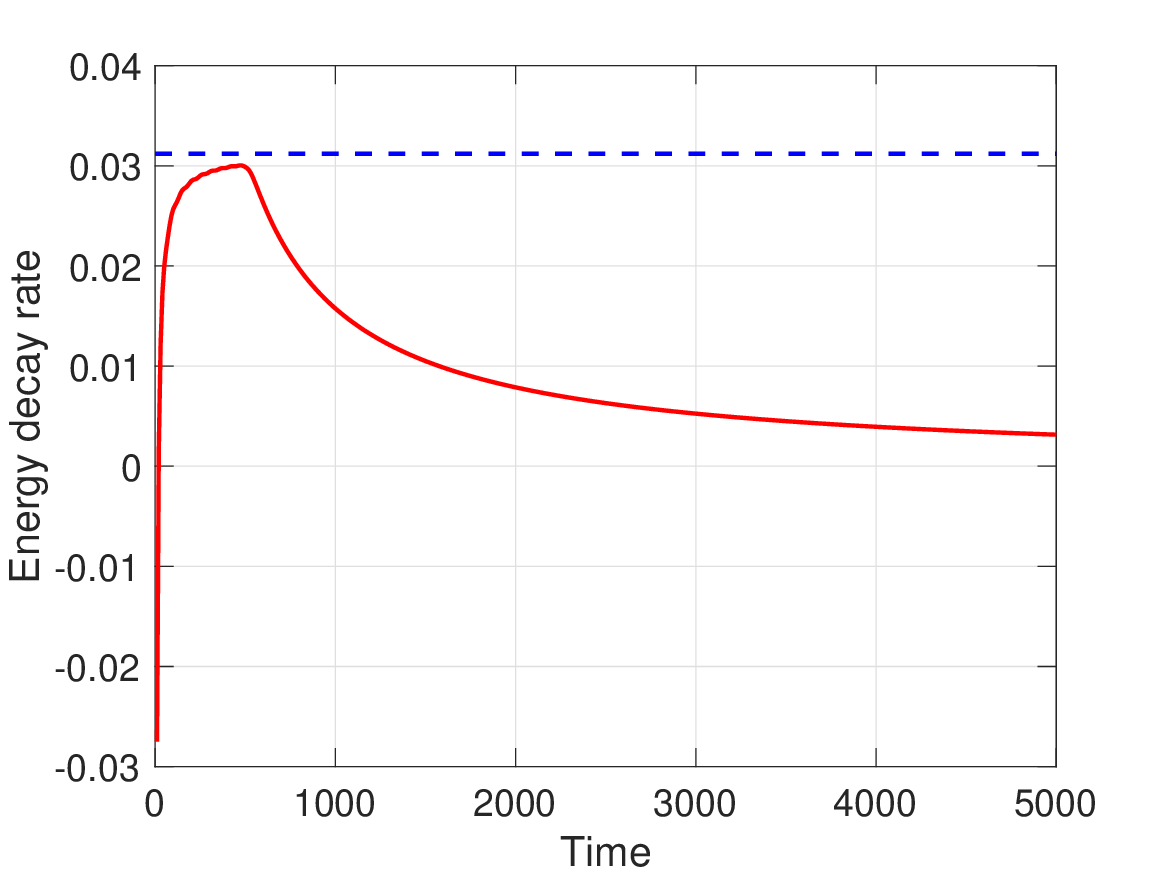} 
   \includegraphics[width=0.3\textwidth]{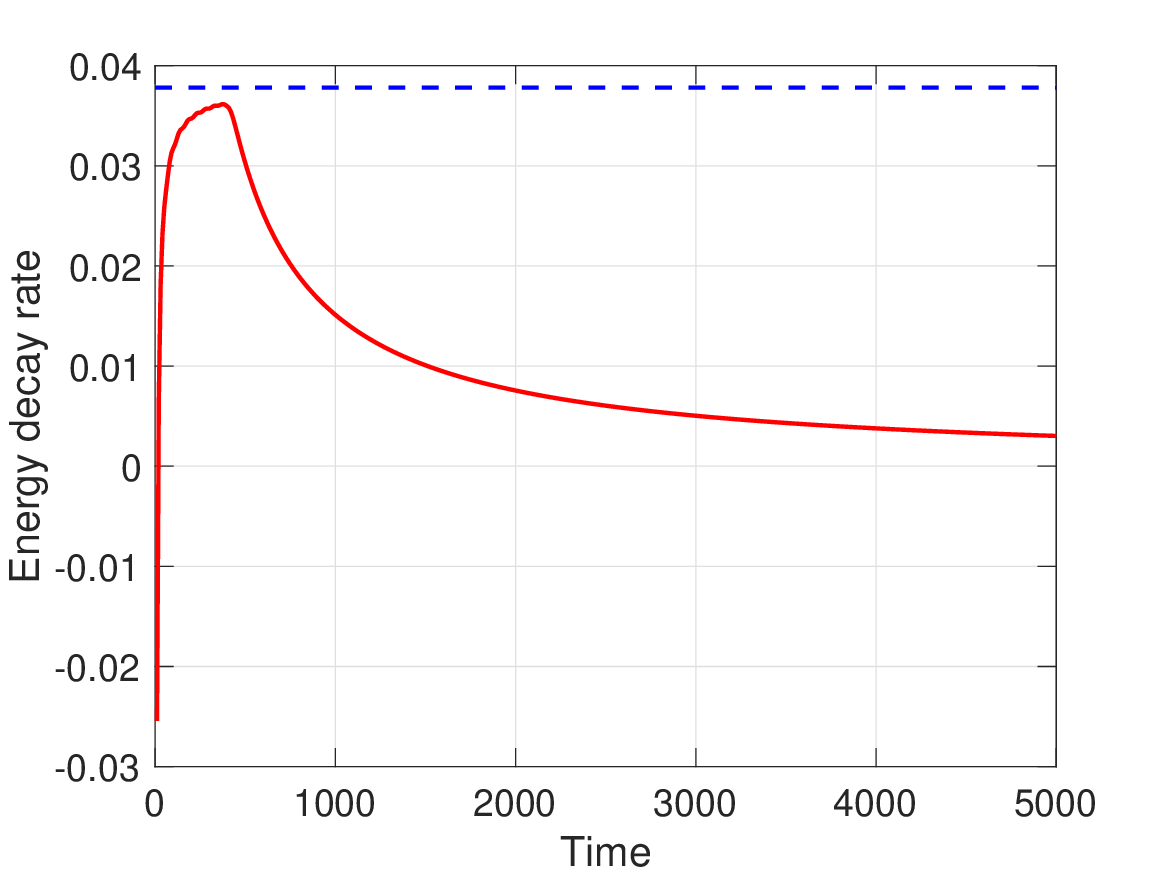}
   \includegraphics[width=0.3\textwidth]{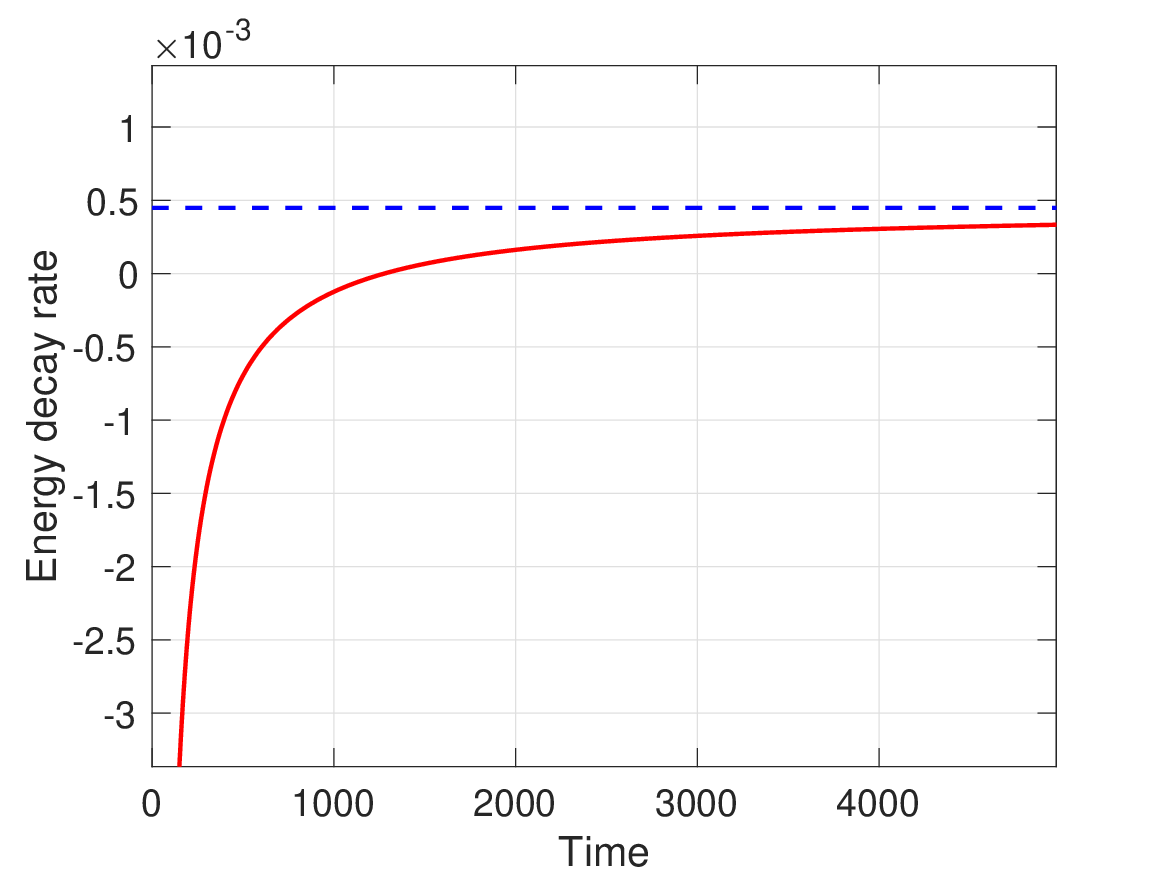}
   \caption{The numerically observed value of $-\log (E(t))/t$ for the solution to \eqref{eqn:dampwave} with $U(0,x) = \sin(x)$ (red) compared to the resonance prediction (blue) for \textbf{(Left)} $\chi_1$ damping. \textbf{(Middle)} $\chi_2$ damping. \textbf{(Right)} $\chi_3$ damping.}
   \label{fig:res1}
\end{figure}

\subsection{The full water wave problem}

To demonstrate how effective the damping we present here can be in the full model however, we also have included a model wave train from a forced-damped water wave model solved with very high precision using the techniques of \cite{Mar_Prior30}.  We illustrate this in Figure \ref{fig:water}.  As can be seen clearly, the damping appears to have a very strong local effect and allows for nonlinear wave trains to exist stably far from the damping.  

\begin{figure}[htbp] 
   \centering
   \includegraphics[width=0.6\textwidth]{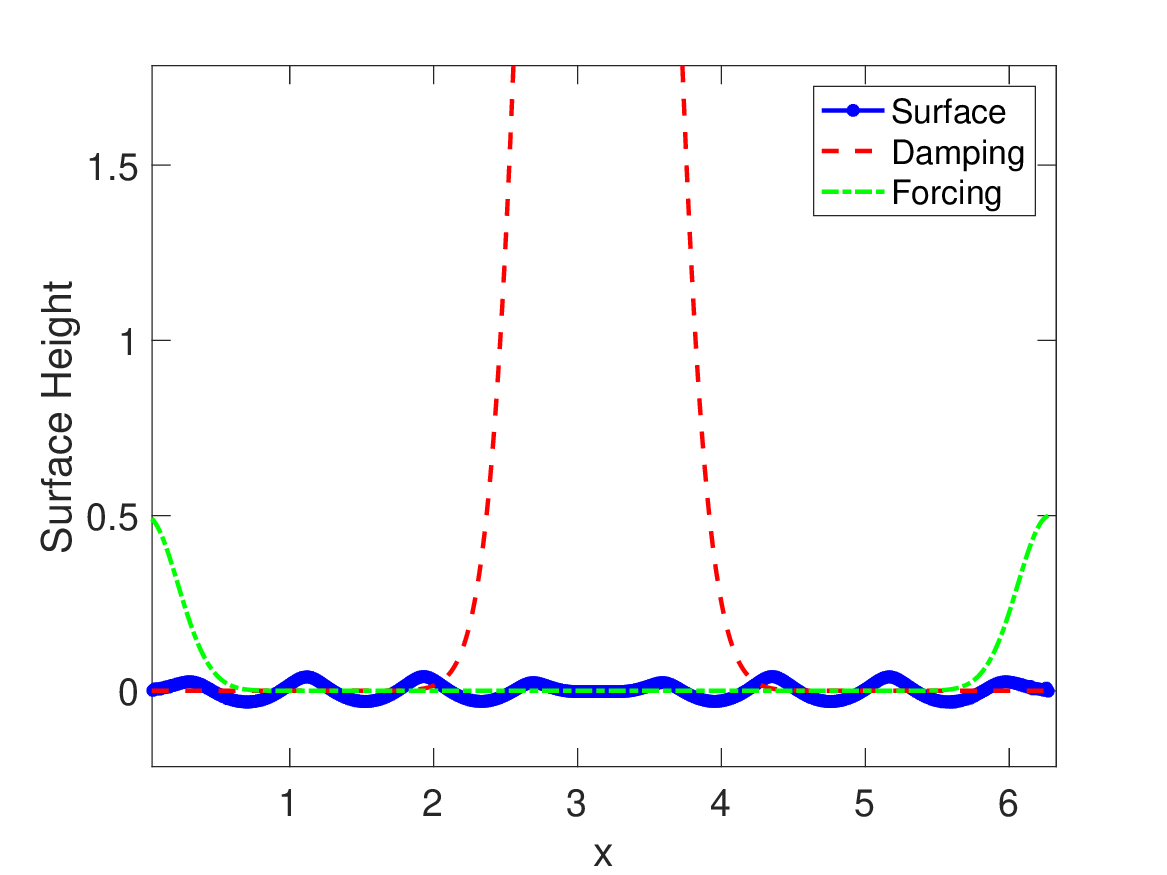} 
   \caption{A time slice of a forced-damped water wave train.}
   \label{fig:water}
\end{figure}

\bibliography{dampedwave}
\bibliographystyle{alpha}

\end{document}